\newtheorem{fed}{Definition}[section]
\newtheorem*{fed*}{Definition}
\newtheorem*{feds*}{Definitions}
\newtheorem{teo}[fed]{Theorem}
\newtheorem*{teo*}{Theorem}
\newtheorem{lem}[fed]{Lemma}
\newtheorem{cor}[fed]{Corollary}
\newtheorem{pro}[fed]{Proposition}
\theoremstyle{definition}
\newtheorem{rem}[fed]{Remark}
\newtheorem*{rems*}{Remarks}
\newtheorem{exa}[fed]{Example}
\def\coma{\, , \, }
\def\py{\peso{and}}
\newcommand{\peso}[1]{ \quad \text{ #1 } \quad }
\def\Par{\big(\,}
\def\Pal{\big)\,}
\def\n0{n_{ \text{\rm \tiny o}}}
\def\M{\mathbb {M}}
\def\suml{\sum\limits}
\def\QEDP{\tag*{\QED}}
\def\bce{\begin{center}}
\def\ece{\end{center}}
\newcommand{\trivial}{\{0\}}
\def\py{\peso{and}}
\def\rk{\text{\rm rk}}
\def\noi{\noindent}
\def\cF{\mathcal F}
\def\cG{\mathcal G}
\def\QED{\hfill $\square$}
\def\EOE{\hfill $\triangle$}
\def\EOEP{\tag*{\EOE}}
\def\uno{\mathds{1}}
\def\bm{\left[\begin{array}}
\def\em{\end{array}\right]}
\def\ben{\begin{enumerate}}
\def\een{\end{enumerate}}
\def\bit{\begin{itemize}}
\def\eit{\end{itemize}}
\def\barr{\begin{array}}
\def\earr{\end{array}}
\def\igdef{\ \stackrel{\mbox{\tiny{def}}}{=}\ }
\def\precyl{\ \stackrel{\mbox{\tiny{Weyl}}}{\prec}\ }
\def\x{\mathbf{x}}
\def\y{\mathbf{y}}
\def\z{\mathbf{z}}
\def\la{\lambda}
\def\N{\mathbb{N}}
\def\R{\mathbb{R}}
\def\C{\mathbb{C}}
\def\I{\mathbb{I}}
\def\Z{\mathbb{Z}}
\def\cC{\mathcal{C}}
\def\cH{\mathcal{H}}
\def\cK{\mathcal{K}}
\def\cP{\mathcal{P}}
\def\cS{{\cal S}}
\def\cB{{\cal B}}
\def\cV{{\cal V}}
\def\cU{{\cal U}}
\def\vacio{\varnothing}
\def\orto{^\perp}
\def\inc{\subseteq}
\def\rai{^{1/2}}
\def\api{\langle}
\def\cpi{\rangle}
\def\ua{^\uparrow}
\def\da{^\downarrow}
\def\spr{\text{\rm Spr}}
\def\sprm{\text{\rm Spr}^+}
\DeclareMathOperator{\Preal}{Re} 
 \DeclareMathOperator{\tr}{tr}
\def\H{{\cal H}}
\def\beq{\begin{equation}}
\def\eeq{\end{equation}}
\def\pausa{\medskip\noi}
\def\Ax2{\,( S_{E(\cF)^\#_\cV})\hat{}_x }
\def\daua{\uparrow\downarrow}
\begin{document}

\title{Norm inequalities for the spectral spread\\ of Hermitian operators}
\author{Pedro Massey, Demetrio Stojanoff and Sebastian Zarate 
\footnote{Partially supported by CONICET
(PICT ANPCyT 1505/15) and  Universidad Nacional de La Plata (UNLP 11X829) %\hspace{19cm}
 e-mail addresses:  massey@mate.unlp.edu.ar , demetrio@mate.unlp.edu.ar , seb4.zarate@gmail.com  }
\\
{\small Centro de Matem\'atica, FCE-UNLP,  La Plata
and IAM-CONICET, Argentina }}
\date{}

\maketitle

\begin{abstract}
In this work we introduce a new measure for the dispersion of the  spectral scale of a Hermitian (self-adjoint) operator acting on a separable infinite dimensional Hilbert space that we call spectral spread. Then, we obtain some submajorization inequalities involving the spectral spread of self-adjoint operators, that are related to Tao's inequalities for anti-diagonal blocks of positive operators, Kittaneh's commutator inequalities for positive operators and also related to the Arithmetic-Geometric mean inequality. In turn, these submajorization relations imply inequalities for unitarily invariant norms (in the compact case).
\end{abstract}

\noindent  MSC(2000) subject classification: 47A30, 47B10, 47B15.

\noindent Keywords: Spectral spread, Tao's inequality, commutator inequalities.

\tableofcontents

\section{Introduction}

The development of inequalities involving spectral scales and generalized singular values of operators acting on a Hilbert space is a central topic in operator theory. The literature related to this research area is vast (see \cite{bha-Kitt90,bha-Kitt08,Hirz09,KitHir,Kitt,Kitt2008,Kitt09,Tao06,Zhan00,Zhan02} just to mention a few works strictly related to our present research). Therefore obtaining new inequalities, involving new concepts, is of interest in itself.

\pausa
In the matrix context (i.e. finite dimensional operator theory) A. Knyazev and A. Argentati introduced in \cite{AKFEM} an interesting measure for the dispersion of the eigenvalues of an Hermitian matrix called {\it spread}. They conjectured that the spread of an Hermitian matrix can be used to bound the so-called absolute variation of its Ritz values. In \cite{MSZ1} we obtained some inequalities that correspond to weak versions of Knyazev-Argentati's conjectures. At that point we realized that although natural and elegant, the spread of Hermitian matrices was not developed in the literature. Hence, in \cite{MSZ2} we made a systematic study of this notion. It turns out (see \cite{MSZ2}) that the spread of Hermitian matrices is related to several inequalities in terms of a pre-order relation known as submajorization (see \cite{bhatia}). For example, the spread allows us to obtain inequalities that are related to Tao's
inequalities \cite{Tao06} for the singular values of anti-diagonal blocks of positive matrices. 
 
 \pausa
On the other hand, in \cite{MSZ2} we showed that the spread of Hermitian matrices is also related to some commutator inequalities for generalized commutators (see also \cite{Hirz09,KitHir,Kitt,Kitt2008,Kitt09}). These results were applied in \cite{MSZ3} to obtain upper bounds for the absolute variations of Ritz values of Hermitian matrices that partially confirm Knyazev-Argentati's conjecture (although we point out that the original conjecture remains an open problem at this time).

\pausa
Motivated by Knyazev-Argentati's work \cite{AKFEM} and our previous works \cite{MSZ1,MSZ2,MSZ3} we introduce 
the { spectral spread} 
 of self-adjoint operators acting on a separable infinite dimensional Hilbert space $\cH$. 
In order to describe the spectral spread of a self-adjoint operator $A\in B(\cH)$, we consider its  spectral scale $\la(A)=(\la_i(A))_{i\in\Z_0}$ where $\Z_0=\Z\setminus\{0\}$ (see Definition \ref{defi espec scale}) defined by a ``min-max'' method. 
The numbers (entries) in this scale satisfy
$$
\la_{-i}(A)\leq \la_{-i-1}(A) \leq \la_{i+1}(A)\leq\la_{i}(A) \peso{for}i\in\N\,.
$$
For example, if $A$ is also compact, then the entries of the 
sequence $\la(A)$ are  the eigenvalues of $A$, 
in such a way that the numbers $\la_i(A)$ (for $i \in \N$) are the positive eigenvalues of 
$A$ counting multiplicities (or zero) arranged in  non-increasing order and 
$\la_{-i}(A)$ (for $i \in \N$) are the negative eigenvalues of $A$ counting multiplicities (or zero) arranged in 
 non-decreasing order.
Then, the  {\it spectral spread} of $A$, noted $\sprm(A)\in \ell^\infty(\N)$, is the non-negative and non-incresing sequence given by
$$
\sprm(A)=(\,\lambda_i(A)-\lambda_{-i}(A)\,)_{i\in \N} \,.
$$ 
After recalling some well known facts related to spectral scales and singular values (see Section \ref{Prelis}), 
we show some general properties of the spectral spread and its relation with the generalized singular values. 
Then we obtain several inequalities for the spectral spread, in terms of 
submajorization relations between generalized singular values of operators, and therefore for 
unitarily invariant norms associated to symmetrically normed operator ideals  in $B(\H)$
(which is a well known technique, see \cite{GoKre}). 
Our present results formally extend our previous results for the spread of Hermitian matrices related to Tao's and Kittaneh's inequalities 
 (see \cite{Tao06} and \cite{Kitt}). Moreover, we obtain some new (stronger) inequalities for commutators of self-adjoint operators in terms of submajorization (see Theorem \ref{teo doblesprm oplus}). 

\pausa
The fact that the spectral spread of self-adjoint operators is related to Tao's inequalities for singular values of anti-diagonal blocks suggests that the spectral spread can also be related to  Bhatia-Kittaneh's Arithmetic-Geometric mean inequalities \cite{bha-Kitt90,bha-Kitt08} (see \cite{Tao06}) and to Zhan's inequalities for the difference of positive operators \cite{Zhan00,Zhan02}. It turns out that this is the case; indeed, we develop
AGM-type inequalities,  and Zhan's type inequalities involving the spread of self-adjoint operators. On the other hand, we show that  many of the main inequalities for the spectral spread obtained in the present work are actually equivalent.

\pausa
The paper is organized as follows. In Section \ref{Prelis} we recall the basic facts about 
spectral theory for self-adjoint operators in $B(\cH)$ and
submajorization (between self-adjoint operators and between bounded real sequences). In Section \ref{sec spread} we introduce the spectral spread of a self-adjoint operator and obtain some basic results related with this notion. In Section \ref{sec 4} we obtain several submajorization inequalities for the spectral spread. Indeed, in Section \ref{A key inequality} we obtain an  inequality for the generalized singular values of anti-diagonal blocks of a self-adjoint operator that plays a key role throughout our work. In Section 
\ref{sac conmut ineq} we obtain inequalities for the generalized singular values of commutators of self-adjoint operators
in terms of the spectral spread. In Section \ref{AG} we develop several inequalities related to the Arithmetic-Geometric mean inequality in terms of submajorization relations and the spectral spread. In turn, these submajorization relations imply (in the compact case) inequalities with respect to unitarily invariant norms. In Section \ref{sec 53}  we show that 
 many of the main inequalities for the spread obtained in the present work are actually equivalent.

\section{Preliminaries}\label{Prelis}

In this section we introduce the basic notation and definitions used throughout our work.  

\pausa
{\bf Notation and terminology}. We let $\H$ be an infinite dimensional separable complex Hilbert space and we let $B(\H)$ be the algebra of bounded linear operators acting on  $\H$. In this case, $K(\H)\subset B(\H)$ denotes the ideal of compact operators and  $\cU(\H)\subset B(\H)$ denotes the group of unitary operators acting on $\H$. In what follows $K(\H)^{sa}$ and  $B(\H)^{sa}$  denote the real subspaces of self-adjoint and compact and self-adjoint operators, respectively. Also, we denote by $B(\H)^+$ the cone of positive operators and $K(\H)^+=B(\H)^+\cap K(\H)$. 

\pausa
We write $\N=\{1,2,\ldots\}$, $\I_k=\{1,...,k\}\subset \N$, for $k\in\N$ and $\I_\infty=\N$.
Also $\uno $ denotes the constant sequence 
with all its entries equal to one.

\subsection{Spectral scales and submajorization of self-adjoint operators}

In what follows, given $0\leq n$ we let $\cG(n,\cH)$ denote the  Grassmann manifold of $n$-dimensional subspaces of $\cH$ .  
Given a subspace $\cS \inc \cH$, we denote $\cS_1=\{x\in \cS: \|x\|=1\}$ the set of unit vectors in $\cS$. 
We also denote $\Z_0=\Z\setminus\{0\}$.

\begin{fed}\label{defi espec scale}\rm
Given $A\in B(\cH)^{sa}$ we define the  spectral scale of $A$ as the sequence $\la(A)=(\la_i(A))_{i\in\Z_0}$ determined by: for $i\in\N$ we let
$$
\la_i(A)=\inf_{\cS\in \cG(i-1,\cH)} \ \sup_{\psi\in \cS_1^\perp} \langle A\psi,\psi \rangle
\py
\la_{-i}(A)=\sup_{\cS\in \cG(i-1,\cH)} \ \inf_{\psi\in \cS_1 ^\perp} \langle A\psi,\psi \rangle\,.
$$
Notice that by construction, 
\beq\label{sangu}
\la_{-i}(A)\leq \la_{-(i+1)}(A) \leq \la_{i+1}(A)\leq\la_{i}(A) \ , \peso{for} i\in\N \ , 
\eeq
and $\la_i(-A)=-\la_{-i}(A)$ for $i\in\Z_0\,$. 
\EOE \end{fed}

\pausa 
The spectral scale of self-adjoint operators 
allows to develop the generalized singular values of arbitrary operators in $B(\cH)$. 

\begin{fed}\label{SV} \rm
Given $X\in B(\cH)$ we define the generalized singular values of $X$ as the sequence $s(X)=(s_i(X))_{i\in \N}$ determined by $s_i(X)=\la_i(|X|)$ for $i\in\N$, where $|X|=(X^*X)^{1/2}\in B(\cH)^+$.
 \EOE
\end{fed}

\pausa
We remark that the spectral scale of self-adjoint operators as well as the generalized singular values ($s$-numbers) of operators have been developed in the more general context of von Neumann algebras endowed with faithful semi-finite normal traces (see \cite{TFa,TFaKos,Kad04,Dpetz}).

\begin{rem}\label{rem en dim finit}
Let $\tilde {\cH}$ be a finite dimensional complex Hilbert space and let $\dim\tilde {\cH}=d$. 
Given $A\in B(\tilde {\cH})^{sa}$ we denote by $\mu(A) \in (\R^d)\da$ 
the vector of eigenvalues of $A$, counting multiplicities and arranged in 
 non-increasing order. 
In this case we can consider $\cG(n,\tilde {\cH})$, i.e. the Grassmann manifold of $n$-dimensional subspaces of $\tilde {\cH}$, 
for $0\leq n\leq d$.  Then, we  
can follow Definition \ref{defi espec scale} and 
set $(\la_i(A)\,)_{i\in\I_d}$ and similarly $(\la_{-i}(A)\,)_{i\in\I_d}$. 
Then 
\beq\label{dim fin}
(\la_i(A)\,)_{i\in\I_d}= \mu(A)  \py (\la_{-i}(A)\,)_{i\in\I_d}= \mu (A)\ua  
= \big( \mu_{d-i+1}(A)\,\big)_{i\in\I_d}\ .
\eeq
Further, in the Hermitian case the singular values of $A$ can be described 
as the non-increasing re-arrangement of the vector of eigenvalue modules i.e.,
$s(A)=(|\la_i(A)|)_{i\in\I_d}\da$. 

\pausa
We point out that this finite version of $\la(A)$ for $A\in B(\tilde \H)$ does not satisfy Eq. \eqref{sangu}. 
Nevertheless, we include the definition of $\la(A)$ since we need this notion in some cases (e.g. to describe 
the decreasing rearrangement of eigenvalues of 
$A_P = PA|_{R(P)}\in B(R(P))^{sa}$ where $P\in B(\H)$ is an orthogonal projection onto a finite dimensional subspace of $\H$, 
appearing in Theorem \ref{teo colecta}). On the other hand, 
the decreasing rearrangements of the eigenvalues of self-adjoint matrices (i.e. self-adjoint operators acting on finite dimensional Hilbert spaces) do share some fundamental properties with the spectral scale of self-adjoint operators acting on infinite dimensional Hilbert spaces. This last fact allows to obtain analogues of our results for self-adjoint matrices, with techniques similar to those included in the present work; we remark that some of these analogues for self-adjoint matrices are new. 
\EOE
\end{rem}

\begin{rem}\label{sing de transf}
Let $\cH_1$ and $\cH_2$ be two separable Hilbert 
spaces and assume that at least one of them has infinite dimension.
Consider $T\in B(\cH_1,\cH_2)$ a bounded linear transformation. In order to define the generalized singular values for $T$ 
we consider the following convention: 
\ben
\item[-] If $\cH_1$ is infinite dimensional, then we set $s_i(T)=\la_i(|T|)$ for $i\in \N$, where $|T|=(T^*T)^{1/2}\in B(\cH_1)$; 
\item[-] If $\dim \cH_1=k\geq 1$ then we set $s_i(T)=\la_i(|T|)$ for $i\in\I_k$ (here we use Remark \ref{rem en dim finit}) and $s_i(T)=0$, for $i\geq k+1$.
\een
Notice that in any case, $s(T)=(s_i(T))_{i\in\N}$ is a non-increasing sequence indexed by $\N$. Furthermore, with our present convention we always have that $s(T)=s(T^*)$.
\EOE
\end{rem}

\begin{rem}\label{caso K}
If  $A\in K(\H)^{sa}$ is compact (recall that $\dim \H = \infty$), it is easy to see that
 the entries of the sequence $\la(A)=(\la_i(A))_{i\in\Z_0}$ %of Definition \ref{defi espec scale} 
are also eigenvalues of $A$ (or zero), in such a way that the numbers $\la_i(A)$ 
(for $i \in \N$) are the positive eigenvalues of $A$ counting multiplicities  (or zero) arranged in 
 non-increasing order. Similarly the numbers 
$\la_{-i}(A)$ (for $i \in \N$) are the negative eigenvalues of $A$ counting multiplicities (or zero) arranged in 
the non-decreasing order. 

\pausa
Notice that if $T\in K(\cH )$, then $|T|\in  K(\H)^{sa}$. So that, the sequence 
$s(T)$ in Definition \ref{SV} is the usual sequence of singular values of $T$, considered as 
the non-increasing rearrangement of the eigenvalues of $|T|$, counting multiplicities. \EOE
\end{rem}

\begin{rem}\label{caso la}
For a general $A\in B(\H)^{sa}$, (see \cite[Section 3]{AMRS07}) it follows that  
$\lim_{i\rightarrow \infty}\la_{-i}(A)=\min \sigma_e(A)$ and 
$\lim_{i\rightarrow \infty}\la_i(A)=\max \sigma_e(A)$, where $\sigma_e(A) $ denotes the essential spectrum of $A$.
Hence, we have that 
%by Eq. \eqref{sangu} it follows that the set
\beq\label{cedea}
\barr{rl}
C(A) & \igdef \big\{\, \mu \in \R: \la_{-i}(A) \le \mu \le \la_{i}(A) \peso{for every} 
i \in \N\,\big\} \\&\\
& = \big[\, \min \sigma_e(A) \coma \max \sigma_e(A) \,\big] \not= \vacio \,. 
\earr
\eeq
%Indeed,  (see \cite[Section 3]{AMRS07}). 
Clearly the elements of 
$$
\Lambda^+(A) = \{\la \in \sigma (A) : \la >
%\la_0^+(A) \igdef 
\max \sigma_e(A)\}
%\la^+(A) = \{\la \in \sigma (A) : \la >\la_0^+(A) \igdef \max \sigma_e(A)\}
$$ 
are eigenvalues with finite multiplicity (since $A-\la\, I $ is Fredholm). Also this set is countable and $\Lambda^+(A) = \{\la_i(A): i \in \N\}$  or  
 $|\Lambda^+(A) |=n $ and $\la_i(A) = \max \sigma_e(A)$ %\la_0^+(A) $
  for every $i>n$. A similar phenomenon happens with the sequence
$\la_{-i}(A)$ for $ i \in \N$,  and %$\la_0^-(A) \igdef 
 $\min \sigma_e(A)$.
%
%\pausa
A consequence of this characterization of $\la(A)$ is %Eq. \eqref{cedea} is %it is easy to see 
that,  for every Hilbert space $\cK$ and every $B\in  B(\cK)^{sa} $, 
% such that $\sigma (B)\inc C(A)$,  %$\mu \in C(A)$ 
%it holds that
\beq\label{sumando B}
\sigma (B)\inc C(A) \implies
\la(A) = \la (A\oplus B) \ , \peso{where} A\oplus B \in  B(\H\oplus \cK)^{sa}  
\eeq
is the block diagonal operator determined by $A$ and $B$. Observe that Eq. \eqref{sumando B} 
holds for the operators $A\oplus \mu\, I_\cK$ for every  $\mu \in C(A)$. In particular, 
if we assume that $A \in  K(\H)^{sa} $ is compact, then automatically 
$C(A) = \trivial$ and $\la(A) = \la (A\oplus 0_\cK)$. 
\EOE
\end{rem}
\def\a{\mathbf a}

\begin{exa} Let $\a = (a_n)_{n\in \N} \in \ell^\infty_\R (\N)$ and $A = D_\a \in B(\ell^2 (\N)\,)^{sa}$
the diagonal multiplication operator as in \eqref{diago}. Then $C(A) = [\liminf \,\a\coma \limsup \, \a]$. This fact 
and Remark \ref{caso la} allows to compute the sequence $\la(A)$ in this case. For example, if 
$$a=(1+1/1\coma  -1+1/1\coma  1+1/2 \coma -1+1/2\coma  1+1/3\coma  -1+1/3\coma \ldots)\,,$$ we see that $\max\sigma_e(A)=1$, $\min\sigma_e(A)=-1$, $\Lambda^+(A)=\{1+1/n\ : \ n\geq 1\}$ and $\Lambda ^-(A)=\{\la\in\sigma \ : \ \la<\min\sigma_e(A)\}=\emptyset$. Hence, $\la_i(A)=1+1/i$ while $\la_{-i}(A)=-1$, for $i\in\N$.\EOE

\end{exa}

\pausa
In what follows, for $1\leq k\leq \infty$, we let
 $$\mathcal{P}_{k}(\H)=\{P\in B(\H)\, : \, P^2=P^*=P\,, \ \text{rk}(P)=k\}\,,$$ denote the subset of $B(\H)$ of all the orthogonal projections of rank $k$\,. 
Next, we collect several well known facts that we will need in the sequel (for details see \cite{AMRS07,GoKre}).

\begin{teo}\label{teo colecta}\rm
Let $A\in B(\cH)^{sa}$ and $P\in \cP_k(\cH)$ ($1\leq k\leq \infty$). %Then %we have the following: 
\ben
\item Interlacing inequalities: %if $P\in \cP_k(\cH)$ ($1\leq k\leq \infty$) 
Denote by $A_P \igdef PA|_{R(P)}\in B(R(P))^{sa}$. Then
\beq\label{PAP}
\la_j(A)\geq \la_j(A_P) \py \la_{-j}(A)\leq \la_{-j}(A_P)\peso{for} j\in\I_k\,.
\eeq
(In case $k\in\N$ we have to consider the Eq. \eqref{dim fin} in Remark \ref{rem en dim finit}).
\item For every $k\in\N$,
\beq
\sum_{i=1}^k \lambda_i(A)=\sup_{P\in \mathcal{P}_k} \tr(P\,A\,P) \py \sum_{i=1}^k\lambda_{-i}(A) =\inf_{P\in \mathcal{P}_k} \tr(P\, A\, P)\,.
\QEDP \eeq %$$\qed
\een
\end{teo}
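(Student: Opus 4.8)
The plan is to derive both items from the min-max definition of the spectral scale (Definition \ref{defi espec scale}) together with the standard variational characterization of partial sums of eigenvalues. For item (1), the interlacing inequalities, I would argue as follows. Fix $j \in \I_k$. For $\la_j(A) \geq \la_j(A_P)$, recall that $\la_j(A_P) = \inf_{\cS} \sup_{\psi \in \cS_1^\perp \cap R(P)} \langle A_P\psi,\psi\rangle$ where $\cS$ ranges over $(j-1)$-dimensional subspaces of $R(P)$ and the orthogonal complement is taken inside $R(P)$; note that for $\psi \in R(P)$ we have $\langle A_P\psi,\psi\rangle = \langle PAP\psi,\psi\rangle = \langle A\psi,\psi\rangle$. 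Given any $(j-1)$-dimensional $\cS \inc \cH$, its compression $P\cS$ has dimension $\le j-1$ inside $R(P)$; enlarging it to a $(j-1)$-dimensional subspace $\cS'$ of $R(P)$, the set $(\cS')^\perp \cap R(P)$ is contained in $\cS^\perp$ (the complement in $\cH$). Hence $\sup_{\psi \in (\cS')_1^\perp \cap R(P)} \langle A\psi,\psi\rangle \le \sup_{\psi \in \cS_1^\perp}\langle A\psi,\psi\rangle$, and taking the infimum over $\cS$ on the right and recognizing that $\cS'$ is an admissible competitor on the left gives $\la_j(A_P) \le \la_j(A)$. The inequality $\la_{-j}(A) \le \la_{-j}(A_P)$ follows by applying the first inequality to $-A$ and using $\la_{-j}(B) = -\la_j(-B)$ from Definition \ref{defi espec scale}, together with $(-A)_P = -(A_P)$; when $k \in \N$ one simply reads off the relevant entry via Eq. \eqref{dim fin}.

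For item (2), I would prove the two equalities; by symmetry (applying the first to $-A$, since $\tr(P(-A)P) = -\tr(PAP)$ and $\la_{-i}(A) = -\la_i(-A)$) it suffices to establish $\sum_{i=1}^k \la_i(A) = \sup_{P \in \cP_k(\cH)} \tr(PAP)$. For the inequality $\ge$: given $P \in \cP_k(\cH)$, the operator $A_P = PA|_{R(P)}$ acts on the $k$-dimensional space $R(P)$, so $\tr(PAP) = \tr(A_P) = \sum_{i=1}^k \la_i(A_P)$ by Eq. \eqref{dim fin} in Remark \ref{rem en dim finit}; applying the interlacing inequality from item (1) term by term gives $\sum_{i=1}^k \la_i(A_P) \le \sum_{i=1}^k \la_i(A)$. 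For the reverse inequality $\le$: when the top of the spectrum consists of genuine eigenvalues, take $P$ to be the orthogonal projection onto a $k$-dimensional invariant subspace realizing $\la_1(A), \dots, \la_k(A)$, so $\tr(PAP) = \sum_{i=1}^k \la_i(A)$ exactly. In the general case (where $\la_k(A)$ may equal $\max\sigma_e(A)$ and need not be an eigenvalue), I would use Remark \ref{caso la}: for any $\eps > 0$ the spectral projection of $A$ for the interval $(\la_k(A) - \eps, \infty)$ has rank $\ge k$ (by the description of $\la(A)$ via $\Lambda^+(A)$ and the essential spectrum), so one can choose a rank-$k$ subprojection $P$ on which the compression of $A$ has all eigenvalues $> \la_k(A) - \eps$; combined with the fact that the top $k-1$ eigenvalues are captured exactly, this yields $\tr(PAP) \ge \sum_{i=1}^k \la_i(A) - k\eps$, and letting $\eps \to 0$ closes the gap.

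The routine part is the min-max bookkeeping in item (1) and the finite-dimensional trace identity $\tr(PAP) = \sum_{i=1}^k \la_i(A_P)$. The main obstacle is the lower bound in item (2) in the purely essential-spectrum regime: one must be careful that although $\la_k(A)$ may fail to be an eigenvalue, arbitrarily good rank-$k$ compressions still exist, which is exactly what the characterization of $\la(A)$ recalled in Remark \ref{caso la} provides. Since this is a collection of well-known facts (the excerpt cites \cite{AMRS07,GoKre}), I would in practice defer the technical details of this limiting argument to those references and present the eigenvalue case in full, noting the essential-spectrum case follows by the standard approximation above.
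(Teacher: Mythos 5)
The paper does not actually prove Theorem \ref{teo colecta}: it is presented as a collection of well-known facts and the reader is referred to \cite{AMRS07,GoKre} for details. So there is no in-text argument to compare yours against; what your proposal does is supply a self-contained proof from Definition \ref{defi espec scale}, and in outline it is correct and is the standard Courant--Fischer/Ky Fan route the cited references take. The interlacing step is sound: for $\psi\in R(P)$ and $s\in\cS$ one has $\langle \psi,s\rangle=\langle \psi,Ps\rangle$, so any $(j-1)$-dimensional $\cS'\inc R(P)$ containing $P\cS$ satisfies $(\cS')^\perp\cap R(P)\inc \cS^\perp$, the min-max comparison goes through, and the $\la_{-j}$ statement follows by passing to $-A$ via $\la_{-j}(A)=-\la_j(-A)$. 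Item 2 then correctly reduces, through item 1 and the identity $\tr(PAP)=\sum_{i=1}^k\la_i(A_P)$, to producing near-optimal rank-$k$ projections.

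The one step that needs tightening is that production of near-optimal projections when some of $\la_1(A),\dots,\la_k(A)$ sit at $\max\sigma_e(A)$. As written (``choose a rank-$k$ subprojection of the spectral projection for $(\la_k(A)-\eps,\infty)$ \dots combined with the fact that the top $k-1$ eigenvalues are captured exactly'') the argument is ambiguous: an arbitrary rank-$k$ subprojection of that spectral projection only guarantees $\tr(PAP)\ge k\,(\la_k(A)-\eps)$, which is far below $\sum_{i=1}^k\la_i(A)$ when $\la_1(A)$ is large, and it need not be the top $k-1$ entries that are genuine eigenvalues --- the correct split is at $n_0=|\{i\le k:\ \la_i(A)>\max\sigma_e(A)\}|$, which can be anything from $0$ to $k$. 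The repair is standard: let $V_0$ be the $n_0$-dimensional span of eigenvectors for $\la_1(A),\dots,\la_{n_0}(A)$; the spectral projection of $A$ for the open interval $\big(\max\sigma_e(A)-\eps\,,\,\la_{n_0}(A)\big)$ has infinite rank (it contains a point of the essential spectrum) and is orthogonal to $V_0$, so adding to $P_{V_0}$ a rank-$(k-n_0)$ subprojection of it yields $P\in\cP_k(\H)$ with $\tr(PAP)\ge\sum_{i=1}^k\la_i(A)-(k-n_0)\,\eps$. With that correction your proof is complete; since you explicitly defer these technicalities to \cite{AMRS07,GoKre}, as the paper itself does, the proposal is acceptable as a justification of the stated facts.
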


\pausa
We can now recall the notion of  submajorization and majorization between operators.

\begin{fed}\rm
Let $A,\,B\in B(\cH)^{sa}$. We say that $A$ is submajorized by $B$, denoted 
%$A\prec_w B$, if 
$$
A\prec_w B  \ , \peso{if}
\sum_{i=1}^k\la_i(A)\leq \sum_{i=1}^k\la_i(B)\peso{for every} k\in\N\,.$$
We say that $A$ is majorized by $B$, denoted $A\prec B$, if $A\prec_w B$ and $-A\prec_w -B$; equivalently, $A\prec B$ if and only if 
\beq
\sum_{i=1}^k\la_i(A)\leq \sum_{i=1}^k\la_i(B) \py  \sum_{i=1}^k\la_{-i}(A)\geq \sum_{i=1}^k\la_{-i}(B) \peso{for every} k\in\N\,. \EOEP
\eeq %$$\EOE
\end{fed}

\pausa
We can further consider the notion of (sub)majorization between sequences in $\ell^\infty(\M)$, where $\M=\N$ or $\M=\Z_0$. In order to do this, we consider the auxiliary Hilbert space $\ell^2(\M)$. Hence, given  
${\bf a}=(a_i)_{i\in\M}\in\ell_\R^{\infty}(\M)$ a bounded real sequence,  let 
$D_{\bf a}\in B(\ell^2(\M))^{sa}$ be determined by 
\beq\label{diago}
D_{\bf a}((\gamma_i)_{i\in\M})=(a_i\,\gamma_i)_{i\in\M} \peso{for} (\gamma_i)_{i\in\M}\in \ell^2(\M)\,.
\eeq

\begin{fed}\label{defi permutaciones sucs} \rm
Let ${\bf a}=(a_i)_{i\in\M_1}\in\ell_\R^{\infty}(\M_1)$ and ${\bf b}=(b_i)_{i\in\M_2}\in\ell_\R^{\infty}(\M_2)$ be real sequences, with $\M_i=\N$ or $\M_i=\Z_0$, for $i=1,2$. 
\ben
\item We let ${\bf a}\da=(a\da_i)_{i\geq 1}\in\ell^\infty_\R(\N)$ and ${\bf a}^{\daua}=(a^{\daua}_i)_{i\in\Z_0}\in \ell^\infty_\R(\Z_0)$ be given by 
$$
{a}_i\da=\la_i(D_{\bf a})\peso{for} \ i\geq 1 \py {a}_i^{\daua}=\la_i(D_{\bf a}) \peso{for} {i\in\Z_0}\,.
$$
\item We say that ${\bf a}$ is submajorized by ${\bf b}$, denoted ${\bf a}\prec_w{\bf b}$, if 
$$
\sum_{i=1}^k a_i\da \leq \sum_{i=1}^k b_i\da \peso{for} k\in\N\,.
$$
\item We say that ${\bf a}$ is majorized by ${\bf b}$, denoted ${\bf a}\prec{\bf b}$, if ${\bf a}\prec_w{\bf b}$ and $-{\bf a}\prec_w-{\bf b}$ i.e.,
\beq
\sum_{i=1}^k a_i^{\daua} \leq \sum_{i=1}^k b_i^{\daua} \py  \sum_{i=1}^k a_{-i}^{\daua} \geq \sum_{i=1}^k b_{-i}^{\daua}\peso{for} k\in\N\,.
\EOEP
\eeq %$$\EOE
\een
\end{fed}

\pausa 
As a consequence of Definition \ref{defi permutaciones sucs} and Theorem \ref{teo colecta}, given ${\bf a}=(a_n)_{n\in\M}\in \ell_{\R}^\infty(\M)$ then, 
$$
\barr{rl}
\suml_{i=1}^k a_i^{\daua} & =\sup\Big\{\,\suml_{i\in F}\,a_i :\ F\inc \M\coma |F|=k\,\Big\}\py 
\\
\suml_{i=1}^k a_{-i}^{\daua} &=\inf\Big\{\,\suml_{i\in F}\, a_i :
\ F\inc \M\coma |F|=k\,\Big\} \peso{for} k\in\N\ . 
\earr
$$
\pausa
Given sequences ${\bf a}=(a_n)_{n\in\N},\, {\bf b}=(b_n)_{n\in\N}\in \ell^\infty(\N)$ 
we let $({\bf a}, {\bf b})\in \ell^\infty(\Z_0)$ be the sequence determined by
\beq \label{de a dos}
({\bf a} \coma {\bf b})_n =\begin{cases} a_{-n} & \peso {if} n <0 \\ b_{n} &\peso {if} n>0 \end{cases}
\peso {for} n \in \Z_0 \,.  
\eeq
On the other hand, given ${\bf a}=(a_n)_{n\in\M},\,{\bf b}=(b_n)_{n\in\M}\in \ell^\infty(\M)$ 
we let 
\beq\label{prod}
{\bf a} \, \cdot\, {\bf b}\in \ell^\infty(\M) \peso{be given by} 
({\bf a} \, \cdot\, {\bf b})_n=a_n\,b_n  \ \ , \peso{for} n\in\M \ ,
\eeq where $\M=\N$ or $\M=\Z_0$.

\pausa
Submajorization relations appear in a natural way in operator theory. In the following result we collect some well known results related to this notion (see \cite{GoKre}).

\begin{teo}\label{teo rejunte1}\rm
Let $A,\,B , \,X,\,Y\in B(\cH)$. Then, the following relations hold:
\ben
\item Weyl's inequality for spectral scales: If $A\coma B\in B(\cH)^{sa}$ then $\lambda(A+B)\prec \lambda(A)+\lambda(B)\,.$ 
\item Weyl's inequality for generalized singular values: 
\beq\label{eq desi Weyl}
s(A+B)\prec_w s(A)+s(B) \ .
\eeq
\item $s_i(X\,A\,Y)\leq \|X\| \, \|Y\|\, s_i(A)$ for $i\geq 1$. In particular %, for $U\coma V\in \cU(\cH)$, 
$$
s(X\,A\,Y)\prec_w \|X\| \, \|Y\|\, s(A) \py s(U\,A\,V) =s(A)
 \peso{for} U\coma V\in \cU(\cH)\ .
$$
\item\label{modulo} If $A\in B(\cH)^{sa}$ and  we let $|\la(A)|=(|\la_i(A)|)_{i\in\Z_0}$ then $s(A)=|\la(A)|\da$.
\qed\een 
\end{teo}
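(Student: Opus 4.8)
The plan is to reduce all four items to the variational (Ky Fan type) descriptions of partial sums collected in Theorem~\ref{teo colecta}, treating item (1) as the backbone and deriving (2)--(4) from it by routine manipulations of generalized singular values; the technical bookkeeping in the non-compact infinite dimensional case will ultimately rest on \cite{GoKre,AMRS07}.

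For (1): by Theorem~\ref{teo colecta}(2), for every $k\in\N$ we have $\sum_{i=1}^k\la_i(A+B)=\sup_{P\in\cP_k(\cH)}\tr\big(P(A+B)P\big)$; since $\tr(P(A+B)P)=\tr(PAP)+\tr(PBP)$ for each fixed $P$, taking the supremum gives $\sum_{i=1}^k\la_i(A+B)\le\sum_{i=1}^k\la_i(A)+\sum_{i=1}^k\la_i(B)$, and dually, using the ``$\inf$'' formula, $\sum_{i=1}^k\la_{-i}(A+B)\ge\sum_{i=1}^k\la_{-i}(A)+\sum_{i=1}^k\la_{-i}(B)$. It then remains to observe that, because of the nesting \eqref{sangu}, in each of $\la(A)$ and $\la(B)$ every negatively indexed entry is $\le$ every positively indexed entry, with the positive block non-increasing and the negative block non-decreasing; hence the sequence $\la(A)+\la(B)\in\ell^\infty(\Z_0)$ has the same shape and therefore coincides with its own $\daua$-rearrangement in the sense of Definition~\ref{defi permutaciones sucs}. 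The two displayed inequalities are then exactly the two conditions that define $\la(A+B)\prec\la(A)+\la(B)$.

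For (2) I would pass to the self-adjoint dilation $\widetilde X=\begin{pmatrix}0&X\\ X^{*}&0\end{pmatrix}\in B(\cH\oplus\cH)^{sa}$. Conjugation by the symmetry $\mathrm{diag}(I,-I)$ sends $\widetilde X$ to $-\widetilde X$, so $\la(\widetilde X)$ is symmetric, and combining this with the polar decomposition of $X$ one obtains $\la_i(\widetilde X)=s_i(X)$ for every $i\in\N$. Since $\widetilde{A+B}=\widetilde A+\widetilde B$, restricting the majorization from (1) to positive indices yields $s(A+B)\prec_w s(A)+s(B)$. For (3), I would first note that for $C\in B(\cH)^{+}$ one has $\la_i\big(Y^{*}CY\big)=\la_i\big(C^{1/2}YY^{*}C^{1/2}\big)\le\la_i\big(\|Y\|^{2}C\big)=\|Y\|^{2}\,\la_i(C)$ for $i\ge1$, using $s(T)=s(T^{*})$ (Remark~\ref{sing de transf}) applied to $T=C^{1/2}Y$ for the first equality, and $YY^{*}\le\|Y\|^{2}I$ together with the monotonicity and homogeneity of $\la_i$ (both immediate from the min-max formula) for the rest. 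Applying this with $C=|A|^{2}$ gives $s_i(AY)\le\|Y\|\,s_i(A)$; dualizing via $s(T)=s(T^{*})$ gives $s_i(XA)=s_i(A^{*}X^{*})\le\|X\|\,s_i(A)$; composing yields $s_i(XAY)\le\|X\|\,\|Y\|\,s_i(A)$. The $\prec_w$ statement is then immediate, and for $U,V\in\cU(\cH)$ applying the bound both to $UAV$ and to $A=U^{*}(UAV)V^{*}$ forces $s(UAV)=s(A)$.

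Finally, for (4): since $A=A^{*}$ we have $|A|=(A^{2})^{1/2}$, so the min-max formula and the monotonicity of $t\mapsto t^{1/2}$ give $s_i(A)=\la_i(|A|)=\la_i(A^{2})^{1/2}$ for $i\ge1$; by the spectral mapping theorem the upper end of $\sigma(A^{2})$ is the image under $t\mapsto t^{2}$ of the largest-modulus part of $\sigma(A)$, so that $\la_i(A^{2})$ is the $i$-th term of the non-increasing rearrangement of $\big(|\la_j(A)|^{2}\big)_{j\in\Z_0}$, and taking square roots gives $s_i(A)=|\la(A)|^{\downarrow}_i$. The only genuinely delicate point throughout is the infinite dimensional, non-compact bookkeeping: the scales $\la_{\pm i}$ stabilize at the edges of the essential spectrum (Remark~\ref{caso la}), so the identity $\la_i(\widetilde X)=s_i(X)$ and the passage between $\sigma(A^{2})$ and the two-sided scale $\la(A)$ must be read off from the variational definition rather than from a naive counting of eigenvalues; complete proofs of all of these facts are available in \cite{GoKre}.
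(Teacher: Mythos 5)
The paper does not prove Theorem \ref{teo rejunte1} at all: it is presented as a collection of well-known facts with a terminal \verb|\qed| and a pointer to \cite{GoKre}, so there is no in-paper argument to compare yours against. Judged on its own, your derivation is correct and is essentially the standard route. Item (1) follows cleanly from the Ky Fan formulas of Theorem \ref{teo colecta}(2) once you observe, as you do, that both $\la(A+B)$ and $\la(A)+\la(B)$ are already in $\daua$-form (negative block non-decreasing, positive block non-increasing, every negatively indexed entry below every positively indexed one), so the partial sums in Definition \ref{defi permutaciones sucs} are the naive ones; this observation is genuinely needed and you state it. Item (2) via the self-adjoint dilation is exactly the content of the paper's Proposition \ref{hat trick como en el futbol} (the identification $\la_i(\widetilde X)=s_i(X)$ for $i\in\N$), and items (3) and (4) are handled by standard min-max monotonicity plus $s(T)=s(T^*)$. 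The only places where you lean on facts not fully argued are (a) the spectral-mapping identity $\la_i(S^{1/2})=\la_i(S)^{1/2}$ for $S\ge 0$, used implicitly in (3) and explicitly in (4), and (b) the claim in (4) that $\la_i(A^2)$ is the $i$-th term of the decreasing rearrangement of $(|\la_j(A)|^2)_{j\in\Z_0}$; in the non-compact setting the latter requires matching the eigenvalues of $A^2$ above $\max\sigma_e(A^2)=\max\{(\min\sigma_e A)^2,(\max\sigma_e A)^2\}$ with the two tails of $\la(A)$ outside $[\min\sigma_e A,\max\sigma_e A]$ as in Remark \ref{caso la}. You flag both points and defer them to \cite{GoKre}, which is the same level of rigor the paper itself adopts, so I see no gap.
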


\begin{rem}\label{rem idealsimetrico}
Submajorization relations play a central role in the study of \textit{symmetrically normed operator ideals} (see \cite{GoKre} for a detailed exposition).  
A symmetrically normed operator ideal $\cC$ of $B(\H)$ is a proper two-sided ideal with a symmetric norm $N(\cdot)$, i.e, a norm with the following additional properties:

\begin{enumerate}
\item Given $A\in \cC$ and $D,E\in B(\H)$, then 
$N(D\,A\,E)\leq \|D\|\, N(A) \, \|E\|$; 
\item For any operator $A$ such that $\text{rk}(A)=1$, $N(A)=\|A\|=s_1(A)$. 
\end{enumerate}
 Moreover, any symmetric norm is unitarily invariant as a consequence of item 1; that is, if $U,V\in \cU(\H)$ then $N(UAV)=N(A)$, for every $A\in\cC$. Any such norm induces a gauge symmetric function $g_N$ 
defined on bounded sequences, such that $N(A) = g_N(s(A)\,)$. 

\pausa  In this context, we have that $\cC\subset K(\cH)$. Moreover, given $B\in \cC$ and $A\in K(\H)$ then 
\beq\label{la NUI} 
s(A)\prec_w s(B) \implies A\in \cC \py N(A)= g_N(s(A)\,)\leq g_N(s(B)\,) = N(B)\ . 
\eeq
For the sake of simplicity, in what follows we will call such a norm $N(\cdot)$  a \textit{unitarily invariant norm}. 
As examples of unitarily invariant norms we mention the Schatten $p$-norms, for $1\leq p<\infty$ associated to the Schatten ideals in $B(\H)$.
 \EOE

\end{rem}

\section{Spectral spread: definition and basic properties }\label{sec spread}

In this section we introduce and develop the first properties of the spectral spread for self-adjoint operators, which is motivated by the spread of self-adjoint matrices introduced by Knyazev and Argentati in \cite{AKFEM}.

\begin{fed}\label{defi spreads}\rm
Given $A\in B(\cH)^{sa}$ we define the  {\it full spectral spread} of $A$, denoted $\spr(A)\in \ell_\R^\infty(\Z_0)$ as the sequence
\beq\label{defi spread compactos}
\spr(A)\igdef(\,\lambda_i(A)-\lambda_{-i}(A)\,)_{i\in \Z_0}=\la(A)+\la(-A) \,.%\in c_0^{\daua}(\Z_0)\,.
\eeq  
We also consider the {\it spectral spread} of $A$, that is the non-negative and non-increasing sequence \beq\label{defi sprm}
\sprm(A)\igdef\big(\, \spr_i(A)\, \big)_{i\in\N} \in \ell^\infty(\N) \,.%\in c_0\da(\N) \cap \R_{\ge 0}^\N \ .
\EOEP
\eeq
\end{fed}

\pausa 
Notice that, by Eq. \eqref{dim fin} and Eq. \eqref{defi spread compactos}, 
this definition of spectral spread essentially coincides with the matrix spread defined in \cite{AKFEM}
and \cite {MSZ2}. 
It is clear that the spectral spread of an operator in $B(\cH)^{sa}$ is a vector valued measure of the dispersion of its spectral scale.

\pausa
In the next result we collect some basic properties about the spectral spread in $B(\cH)^{sa}$. 

\begin{pro}\label{pro rejunte spread}
Let $A,B\in B(\cH)^{sa}$. The following properties holds:
\begin{enumerate}
\item $\spr(A)\in \ell_\R^\infty(\Z_0)$ is anti-symmetric $(\spr_{-j}(A)=-\spr_{j}(A))$; 
$ \sprm(A)\in \ell^\infty(\N) \cap \R_{\ge 0}^\N $.
\item The spectral spread is invariant under real translations i.e., 
for every $c\in \R$, 
$$
\spr(A+c\, I)=\spr(A) \py \sprm(A+c\,I)=\sprm(A)\ .
$$
\item For $c\in \R$ we have that  $\spr(c\, A)= |c|\, \spr(A)$. In particular, 
$\sprm(A)=\sprm(-A)$.
\item If $A\in K(\cH)^{sa}$ is compact, then $\sprm(A) = \sprm (A\oplus 0_\H )$.

\end{enumerate}
\end{pro}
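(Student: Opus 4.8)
The plan is to verify each of the four assertions directly from the definitions, using the properties of the spectral scale collected in Definition \ref{defi espec scale} (in particular Eq. \eqref{sangu} and the identity $\la_i(-A)=-\la_{-i}(A)$) together with Remark \ref{caso la}. For item (1), antisymmetry of $\spr(A)$ is immediate from $\spr(A)=\la(A)+\la(-A)$ and the identity $\la_j(-A)=-\la_{-j}(A)$: indeed $\spr_{-j}(A)=\la_{-j}(A)-\la_{j}(A)=-\spr_j(A)$. That $\sprm(A)=(\spr_i(A))_{i\in\N}$ has non-negative entries and is non-increasing follows directly from the chain $\la_{-i}(A)\le \la_{-(i+1)}(A)\le \la_{i+1}(A)\le \la_i(A)$ in Eq. \eqref{sangu}: non-negativity is $\la_i(A)-\la_{-i}(A)\ge 0$, and monotonicity is $\la_{i+1}(A)-\la_{-(i+1)}(A)\le \la_i(A)-\la_{-i}(A)$, which rearranges to $\la_{i+1}(A)+\la_{-i}(A)\le \la_i(A)+\la_{-(i+1)}(A)$, a consequence of $\la_{i+1}(A)\le \la_i(A)$ and $\la_{-i}(A)\le \la_{-(i+1)}(A)$.

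For items (2) and (3), I would first record the elementary behaviour of the spectral scale under affine transformations of $A$. From the ``min--max'' formula in Definition \ref{defi espec scale}, adding $c\,I$ to $A$ adds $c$ to every quadratic form $\langle A\psi,\psi\rangle$ (since $\|\psi\|=1$), hence $\la_i(A+c\,I)=\la_i(A)+c$ for all $i\in\Z_0$; therefore $\spr_i(A+c\,I)=\la_i(A)+c-(\la_{-i}(A)+c)=\spr_i(A)$, giving item (2). For item (3), scaling by $c\ge 0$ scales each quadratic form by $c$, so $\la_i(cA)=c\,\la_i(A)$; for $c<0$ the sup and inf in the two formulas swap roles, yielding $\la_i(cA)=c\,\la_{-i}(A)$ (equivalently, combine $\la_i(-A)=-\la_{-i}(A)$ with the $c>0$ case applied to $|c|A$). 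In either sign one gets $\spr_i(cA)=\la_i(cA)-\la_{-i}(cA)=|c|\,(\la_i(A)-\la_{-i}(A))=|c|\,\spr_i(A)$; taking $c=-1$ gives $\sprm(A)=\sprm(-A)$.

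For item (4), the point is that for compact self-adjoint $A$ one has $C(A)=\{0\}$ by Remark \ref{caso la} (the essential spectrum of a compact operator is $\{0\}$), and then Eq. \eqref{sumando B} applies with $B=0_\cK$, since $\sigma(0_\cK)=\{0\}\inc C(A)$; this yields $\la(A)=\la(A\oplus 0_\cK)$, and consequently $\spr(A)=\spr(A\oplus 0_\cK)$ and $\sprm(A)=\sprm(A\oplus 0_\cK)$ entrywise. None of these steps presents a real obstacle; the only point requiring a little care is the sign bookkeeping in item (3) for negative scalars, and confirming that the min--max characterization genuinely transforms as claimed under affine maps — both of which are handled by reducing to the already-recorded identity $\la_i(-A)=-\la_{-i}(A)$ rather than re-deriving everything from the Grassmannian formula.
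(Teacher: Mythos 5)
Your proof is correct and follows essentially the same route as the paper: items 1--3 are read off from the definition $\spr(A)=\la(A)+\la(-A)$, Eq. \eqref{sangu}, and the scaling behaviour $\la(cA)=c\,\la(A)$ for $c\ge 0$ and $\la_i(cA)=c\,\la_{-i}(A)$ for $c<0$, while item 4 invokes Remark \ref{caso la} via $C(A)=\trivial$ and Eq. \eqref{sumando B}. You simply supply the details that the paper leaves implicit.
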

\begin{proof} Items 1. and 2. are direct consequences of 
Eq. \eqref{defi spread compactos} in Definition \ref{defi spreads}.
Item 3. is a consequence of the following fact: given $A\in B(\cH)^{sa}$ then $\la(cA)=c\,\la(A)$ if $c\geq 0$ and $\la(cA)=c\,(\la_{-i}(A))_{i\in\Z_0}$ if $c<0$. Item 4. is a consequence 
of Remark \ref{caso la}.
\end{proof}

\pausa 
Observe that the equality $\sprm(A) = \sprm (A\oplus 0_\H )$ may be false for general self-adjoint 
operators (and also for matrices). For example $\sprm(I_\H) = 0$ but 
$\sprm(I_\H\oplus 0_\H) = \uno$, the sequence constantly equal to one. 
The following result describes several relations between the (full) spectral spread and singular values of self-adjoint operators.

\begin{pro}\label{pro rejunte spread2}
Let $A,\,B \in B(\cH)^{sa}$. 
\ben
\item  The following entry-wise inequalities hold:
\beq\label{spr vs s}
0\leq \sprm_i(A)\leq |\la_i(A)|+|\la _{-i}(A)| \le 2s_i(A) \peso{for every} i \in \N\, .
\eeq
In the positive case, we have that:%  the inequality is two times better: 
\beq\label{spr vs s pos}
A\in B(\H)^+\implies 
 \sprm_i(A) \leq  \la_i(A)= s_i(A)\peso{for every} i \in \N\, .
\eeq
\item Let $A\oplus A \in B(\H\oplus \H)^{sa}$ be given  by 
$A\oplus A= \bm{cc} A&0\\0&A\em$. Then 
\beq\label{AmasA}
\sprm(A\oplus A)=(\sprm(A),\sprm(A))\da\py 
\frac{1}{2}\, \sprm(A\oplus A)\prec_w s(A)\ .
\eeq
\item If $A\prec B$ then  $\sprm(A) \prec_w \sprm(B)\,.$
\item Additive Spread inequality: $\spr(A+B)\prec\spr(A)+\spr(B)$.
\een
\end{pro}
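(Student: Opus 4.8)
The plan is to prove the four items in order, relying on the characterizations of the spectral scale collected in Section \ref{Prelis}. Items 1 and 2 are essentially bookkeeping. For item 1, the first inequality $0\le \sprm_i(A)$ is Proposition \ref{pro rejunte spread}(1), and $\sprm_i(A)=\la_i(A)-\la_{-i}(A)\le |\la_i(A)|+|\la_{-i}(A)|$ is the triangle inequality. Then $|\la_i(A)|\le s_i(A)$ and $|\la_{-i}(A)|\le s_i(A)$ follow from Theorem \ref{teo rejunte1}(\ref{modulo}), since $s(A)=|\la(A)|\da$ dominates entrywise the $i$-th largest modulus, and both $|\la_i(A)|$ and $|\la_{-i}(A)|$ are among the first $i$ entries of $|\la(A)|$ by the ordering \eqref{sangu} — more precisely, $|\la_i(A)|$ and $|\la_{-i}(A)|$ are each $\le$ the $i$-th term of the non-increasing rearrangement of $\{|\la_{\pm 1}|,\dots,|\la_{\pm i}|\}$, hence $\le s_i(A)$. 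For the positive case, if $A\in B(\H)^+$ then $\la_{-i}(A)\ge 0$, so $\sprm_i(A)=\la_i(A)-\la_{-i}(A)\le \la_i(A)$, and $\la_i(A)=|\la_i(A)|=s_i(A)$ since all entries of $\la(A)$ are non-negative and already non-increasing in $i$.

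For item 2, the key observation is that the spectral scale of $A\oplus A$ is obtained by interleaving two copies of $\la(A)$: by the description below Theorem \ref{teo colecta} (sums over $k$-subsets), for $A\oplus A\in B(\H\oplus\H)^{sa}$ one has $\la_i(A\oplus A)$ equal to the non-increasing rearrangement of the multiset $\{\la_j(A),\la_j(A):j\in\N\}$ on the positive side, and similarly $\la_{-i}(A\oplus A)$ is the non-decreasing rearrangement of two copies of $(\la_{-j}(A))_j$. Consequently $\spr_i(A\oplus A)$ for $i\in\N$ is the non-increasing rearrangement of two copies of $\sprm(A)$, which is exactly $(\sprm(A),\sprm(A))\da$; this gives the first equality in \eqref{AmasA}. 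For the submajorization, combine this with item 1: $\tfrac12\sprm(A\oplus A)=\tfrac12(\sprm(A),\sprm(A))\da$, and since $\sum_{i=1}^k (\sprm(A),\sprm(A))_i\da \le 2\sum_{i=1}^{\lceil k/2\rceil}\sprm_i(A)$ while $\sprm_i(A)\le 2s_i(A)$ by \eqref{spr vs s}, a short summation argument using that $s(A)$ is non-increasing yields $\sum_{i=1}^k \tfrac12\sprm_i(A\oplus A)\le \sum_{i=1}^k s_i(A)$ for every $k$. (Alternatively, one notes $\tfrac12\sprm(A\oplus A)\prec_w \tfrac12\cdot 2\,s(A\oplus A)\da$-type bound reduces to the single-operator statement; the cleanest route is the explicit interleaving.)

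Item 3 is the place where I expect the only real subtlety, though it is still short. Assume $A\prec B$. I would first show $\la(A)\prec\la(B)$ as sequences in $\ell^\infty(\Z_0)$, i.e. $\sum_{i\in F}\la_i\da(A)$-type control from above and $\sum$ from below; this is immediate from the definition of $\prec$ between self-adjoint operators and Theorem \ref{teo colecta}(2). Then, writing $\spr(A)=\la(A)+\la(-A)$ and using that $A\prec B$ iff $-A\prec -B$, I would invoke item 4 (the additive spread inequality, proved just below) in the degenerate form, or more directly: the map $A\mapsto \sprm(A)$ restricted to majorization should be handled by the doubly-stochastic/averaging characterization of majorization — $A\prec B$ means $\la(A)$ lies in the (closed convex hull of permutations/orbit) generated by $\la(B)$, and $\sprm$ is built from $\la$ by the linear operation "difference of symmetric entries" followed by rearrangement, which is convex and permutation-monotone, hence preserves $\prec_w$. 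The clean way to package this: show that $\spr(A)=\la(A)+\la(-A)\prec \la(B)+\la(-B)=\spr(B)$ by applying item 4 with the pair $(A,0)$ replaced appropriately — actually the direct argument is that majorization $\la(A)\prec\la(B)$ plus the fact that $x\mapsto$ (sorted vector of $(x_i-x_{-i})_{i\in\N}$) is isotone for $\prec_w$ (a standard consequence of Birkhoff / the Hardy–Littlewood–Pólya characterization, valid in the infinite-dimensional $\ell^\infty$ setting via finite truncations) gives $\sprm(A)\prec_w\sprm(B)$. The main obstacle is making the "convexity + rearrangement preserves submajorization" step rigorous in the non-compact, $\Z_0$-indexed setting; I would do this by reducing to finite subsets $F\subset\Z_0$ and using the sup/inf formulas displayed after Definition \ref{defi permutaciones sucs}.

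Finally, item 4 is the quickest: $\spr(A+B)=\la(A+B)+\la(-(A+B))=\la(A+B)+\la(-A-B)$, and by Weyl's inequality for spectral scales (Theorem \ref{teo rejunte1}(1)) we have $\la(A+B)\prec\la(A)+\la(B)$ and $\la(-A-B)\prec\la(-A)+\la(-B)$; adding these two majorizations (majorization is compatible with addition of sequences when one side is a fixed sum, or — to be safe — use that $\prec$ is preserved under adding the same majorization-comparable pair) gives $\spr(A+B)\prec \big(\la(A)+\la(-A)\big)+\big(\la(B)+\la(-B)\big)=\spr(A)+\spr(B)$. I would remark that the additivity of $\prec$ used here is the standard fact that $x\prec x'$ and $y\prec y'$ imply $x+y\prec x'+y'$, which holds in $\ell^\infty(\Z_0)$ by truncation to the finite-dimensional case where it is classical.
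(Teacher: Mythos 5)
Your item 1 follows the paper's argument, but item 2 contains a genuine gap, item 3 is not actually carried out, and item 4 rests on a false general principle (though the application survives).

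The gap in item 2 is in the submajorization $\tfrac12\,\sprm(A\oplus A)\prec_w s(A)$. You reduce $\tfrac12\sum_{i=1}^{k}\sprm_i(A\oplus A)$ to (roughly) $\sum_{i=1}^{\lceil k/2\rceil}\sprm_i(A)$ and then invoke the entrywise bound $\sprm_i(A)\le 2s_i(A)$. For $k=2m$ this yields $\tfrac12\sum_{i=1}^{2m}\sprm_i(A\oplus A)\le 2\sum_{i=1}^{m}s_i(A)$, but since $s(A)$ is non-increasing one has $2\sum_{i=1}^{m}s_i(A)\ge\sum_{i=1}^{2m}s_i(A)$ --- the comparison you would need points the other way, so the chain does not close. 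The paper instead keeps the sharper intermediate quantity from \eqref{spr vs s}, namely $\sprm_i(A)\le|\la_i(A)|+|\la_{-i}(A)|$, and observes that $\sum_{i=1}^{m}\big(|\la_i(A)|+|\la_{-i}(A)|\big)$ is a sum of $|\la_j(A)|$ over a subset $F\subset\Z_0$ with $|F|=2m$, hence is at most $\sum_{i=1}^{2m}s_i(A)$ by the sup-over-subsets formula for the partial sums of $s(A)=|\la(A)|\da$ (the odd case adds the single term $\sprm_{m+1}(A)\le 2\max\{|\la_{m+1}(A)|,|\la_{-(m+1)}(A)|\}$, contributing one more index to $F$). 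The point is that the $2m$ moduli $|\la_{\pm i}(A)|$, $i\le m$, must be compared with the top $2m$ singular values, not with twice the top $m$; collapsing to $2s_i(A)$ too early loses exactly this.

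For item 3 you invoke Birkhoff-type convexity and rearrangement machinery and concede an ``obstacle'' in the infinite-dimensional setting; none of that is needed. Since $\sprm(A)$ is already non-increasing, $\sum_{i=1}^{k}\sprm_i(A)=\sum_{i=1}^{k}\la_i(A)-\sum_{i=1}^{k}\la_{-i}(A)$, and $A\prec B$ gives $\sum_{i=1}^{k}\la_i(A)\le\sum_{i=1}^{k}\la_i(B)$ together with $\sum_{i=1}^{k}\la_{-i}(A)\ge\sum_{i=1}^{k}\la_{-i}(B)$; subtracting the two finishes the proof in two lines, which is what the paper does. In item 4, the ``standard fact'' that $x\prec x'$ and $y\prec y'$ imply $x+y\prec x'+y'$ is false in general: take $x=x'=y=(1,0)$ and $y'=(0,1)$, so that $x+y=(2,0)\not\prec(1,1)=x'+y'$. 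The correct statement (item 3 of Lemma \ref{lema desigrespetanmayo}) requires the majorants to be decreasingly arranged; this hypothesis does hold here because $\la(A)+\la(B)$ and $\la(-A)+\la(-B)$ inherit the arrangement \eqref{sangu}, so your conclusion is fine once you cite the lemma with that hypothesis.
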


\proof
Since $s(A)=|\la(A)|\da$ (see Theorem \ref{teo rejunte1}), it follows 
that 
\beq\label{sisi}
\max\{|\la_i(A)|,|\la_{-i}(A)|\}\leq s_i(A) \peso{for every} i\in \N  \ .
\eeq
This proves the first part of item 1. The second part of item 1. follows from the fact that $\la_{-i}(A)\geq 0$ for $A\in B(\cH)^+$.
In order to show item 2. notice that 
the first claim in Eq. \eqref{AmasA} is straightforward. To show the second claim in Eq. \eqref{AmasA} 
we notice that
$$
\sum_{i=1}^{n}\sprm_i(A\oplus A)= \begin{cases} 2\, \suml_{i=1}^{k}\sprm_i(A) & \peso{if} n=2k \\
&\\
2\,  \suml_{i=1}^{k}\sprm_i(A)+ \sprm_{k+1} (A)   & \peso{if} n=2k +1 \end{cases}\ .
$$ 
Recall that $\suml_{i=1}^n s_i(A)=\sup \{\suml_{i\in F} |\lambda_i(A)|\ : \  F \subset \Z_0\, , \ |F|=n\}$. 
Then, using that 
$$
\sprm_{k+1} (A) = \lambda_{k+1}(A)- \lambda_{-(k+1)}(A)
\le 2\, \max\{|\la_{k+1}(A)|,|\la_{-(k+1)}(A)|\}%	\leq  2\, s_{k+1}(A)
$$
and  that 
$$
2\, \sum_{i=1}^{k}\sprm_i(A)
\leq 2\,\sum_{i=1}^{k} |\lambda_i(A)|+|\lambda_{-i}(A)| \ ,
$$
we can easily prove the submajorization relation in Eq. \eqref{AmasA}. 

\pausa To show 3., fix $k\in \N$.  Since $\la(A)\prec\la(B)$, then
$$\sum_{i=1}^k \la_i(A) \leq \sum_{i=1}^k \la_i(B) \py
\sum_{i=1}^k \la_{-i}(A)\geq \sum_{i=1}^k \la_{-i}(B)
$$ 
$$
\implies \sum_{i=1}^k \sprm_i(A)=\sum_{i=1}^k \la_i(A)-\la_{-i}(A)
\leq \sum_{i=1}^k \la_i(B)-\la_{-i}(B)=\sum_{i=1}^k \sprm_i(B)\,.
$$

\pausa
To show item 4. 
notice that $\spr(A+B)=\la(A+B)+\la(-(A+B))$. Therefore
$$
\barr{rl}\spr(A+B) 
& \ \ = \ \lambda(A+B)+\lambda(-A-B) \\&
\\
& \precyl \lambda(A)+\la(B)+\lambda(-A)+\la(-B)
=\spr(A)+\spr(B)\ , %\QEDP
\earr  
$$
where we have used Weyl's additive inequality for the spectral scale.
\qed

\section{Inequalities for the spectral spread}\label{sec 4}

In this section we obtain several submajorization inequalities for the spectral spread of self-adjoint operators. These inequalities show that the spectral spread is a natural measure of dispersion of the spectrum of self-adjoint operators.

\subsection{A key inequality}\label{A key inequality}

In  \cite{Tao06}  Tao showed that given a positive compact operator $F\in K(\H\oplus \cK)$ represented as a block matrix 
\beq\label{eq Taointro} 
F=\bm{cc}F_1&G\\
G^*&F_2 \em\peso{then} 2\,s_i(G)\leq s_i(F)\ , \peso{for} i\in\N\,.
\eeq 
It is natural to ask whether the inequalities in Eqs. 
\eqref{eq Taointro} hold in the more general case in which $F$ is a self-adjoint compact operator. It turns out that these 
inequalities fail in this more general setting (see \cite{MSZ2}).

\pausa 
The next submajorization inequality, which is our first main result of this section, is related to Tao's inequalities in Eq. \eqref{eq Taointro}; we point out that it will  play a key role in the rest of this work. We include the following proof in benefit of the reader; thanks to the definitions given in the previous sections, we can almost reproduce
the proof of the matrix case given in \cite{MSZ2} for this result: 

\begin{teo}\label{teo valecon2op}
Let $A\in B(\cH\oplus \cK)^{sa}$ be
such that $A=\bm{cc}
A_1& B\\
B^* & A_2
\em
 \barr{c}\cH\\\cK\earr  $ is the block representation for $A$. 
Then, $B\in B(\cK\coma\cH)$ (by construction) and
\beq \label{valecon2op}
2\,s(B)\prec_w \sprm(A)\,. 
\eeq
\end{teo}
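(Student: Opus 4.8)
The plan is to reduce the claimed submajorization to a statement about partial sums, namely that for every $k\in\N$ we have $2\sum_{i=1}^{k} s_i(B)\le \sum_{i=1}^{k}\sprm_i(A)$, and then to realize the left-hand side using a clever unitary conjugation of $A$. First I would recall from Theorem \ref{teo colecta} that $\sum_{i=1}^{k}\sprm_i(A)=\sum_{i=1}^{k}\la_i(A)-\sum_{i=1}^{k}\la_{-i}(A)=\sup_{P\in\cP_k}\tr(PAP)-\inf_{Q\in\cP_k}\tr(QAQ)$, so it suffices to produce, for a fixed $k$, a projection $P$ and a projection $Q$ of rank $k$ with $\tr(PAP)-\tr(QAQ)\ge 2\sum_{i=1}^{k}s_i(B)$. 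The natural choice is to build these projections from the singular vectors of $B$: if $Be_j=s_j(B)f_j$ is (a piece of) the singular value decomposition, with $\{e_j\}\subset\cK$ and $\{f_j\}\subset\cH$ orthonormal, then the subspace $\cM_\pm=\gen\{\,\tfrac{1}{\sqrt2}(f_j\pm e_j): j\in\I_k\,\}$ has dimension $k$, and a direct computation gives $\api A(f_j\pm e_j),(f_j\pm e_j)\cpi = \api A_1 f_j,f_j\cpi+\api A_2 e_j,e_j\cpi \pm 2\,\Preal\api Be_j,f_j\cpi = \api A_1 f_j,f_j\cpi+\api A_2 e_j,e_j\cpi \pm 2\,s_j(B)$.

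Summing over $j\in\I_k$ and taking $P=P_{\cM_+}$, $Q=P_{\cM_-}$, the common terms $\sum_j(\api A_1 f_j,f_j\cpi+\api A_2 e_j,e_j\cpi)$ cancel in the difference $\tr(PAP)-\tr(QAQ)$ (here one must be a little careful: $\tr(PAP)\ge \sum_j \api A(\tfrac{1}{\sqrt2}(f_j+e_j)),\tfrac{1}{\sqrt2}(f_j+e_j)\cpi$ with equality because those $k$ vectors form an orthonormal basis of $\cM_+$, and symmetrically for $Q$), leaving exactly $\tfrac12\cdot 2\sum_j 2s_j(B)=2\sum_{j=1}^k s_j(B)$. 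Combined with the variational formulas this yields $\sum_{i=1}^k\sprm_i(A)\ge 2\sum_{i=1}^k s_i(B)$, which is the desired partial-sum inequality, and since both sides are non-increasing sequences this is precisely $2\,s(B)\prec_w\sprm(A)$.

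The one genuine subtlety, and the step I expect to be the main obstacle, is the passage from a finite piece of the singular value decomposition of $B$ to honest orthonormal families. When $B$ is compact this is the usual SVD and the vectors $e_1,\dots,e_k,f_1,\dots,f_k$ exist outright; for a general bounded $B$ one does not have eigenvectors, so I would instead fix $\eps>0$ and use the min-max characterization of $s_i(B)=\la_i(|B|)$ to choose a $k$-dimensional subspace $\cE\subset\cK$ on which $|B|\ge (s_k(B)-\eps)I$, refine it so that $\{|B|^{1/2}$-images or rather $B$-images of an orthonormal basis$\}$ are nearly orthogonal with nearly the right norms, and then run the same computation to get $\sum_{i=1}^k\sprm_i(A)\ge 2\sum_{i=1}^k s_i(B)-C\eps$; letting $\eps\to0$ finishes it. Alternatively — and this is the route I would actually take, since the excerpt advertises that we can "almost reproduce the proof of the matrix case" — one compresses everything to a large finite-dimensional reducing-type subspace containing good approximate singular vectors, applies the finite-dimensional identity above verbatim there via Remark \ref{rem en dim finit} and the interlacing inequalities \eqref{PAP}, and then removes the approximation. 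Either way the algebraic heart is the $\pm$ computation displayed above; the analytic overhead is only in legitimizing the singular vectors.
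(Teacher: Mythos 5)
Your argument is correct in its essentials but follows a genuinely different route from the paper. The paper's proof is algebraic and global: it conjugates $A$ by the diagonal symmetry $U=I_\cH\oplus(-I_\cK)$, observes that $UA-AU$ is exactly the anti-diagonal operator with blocks $\pm 2B$ (so its singular values are $2\,\big(s(B),s(B)\big)\da$ by Proposition \ref{hat trick como en el futbol}), and then bounds $\la(A-U^*AU)\prec\la(A)+\la(-U^*AU)=\spr(A)$ by Weyl's inequality, finishing with item 2 of Lemma \ref{lema desigrespetanmayo}. Your proof is variational and local: you realize $\sum_{i=1}^k\sprm_i(A)$ via the trace formulas of Theorem \ref{teo colecta} and exhibit explicit rank-$k$ test projections onto $\gen\{\frac{1}{\sqrt2}(f_j\pm e_j)\}$ built from singular vectors of $B$; the $\pm$ cross terms produce exactly $2\sum_{j\le k}s_j(B)$. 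What the paper's route buys is that it needs no singular vectors at all, so it applies verbatim to non-compact $B$; the price is that it leans on the operator-level Weyl majorization and on the doubling/halving step $2(s(B),s(B))\da\prec_w(\sprm(A),\sprm(A))\da\Rightarrow 2s(B)\prec_w\sprm(A)$. What your route buys is transparency about \emph{where} the spread enters (it is literally the gap between a sup and an inf of compressions) and it yields the partial-sum inequality directly, with no doubling.

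The one step you should not leave as a sketch is the existence of the orthonormal systems $\{e_j\}_{j\in\I_k}\subset\cK$, $\{f_j\}_{j\in\I_k}\subset\cH$ with $\sum_{j\in\I_k}\Preal\api Be_j,f_j\cpi\geq\sum_{j\in\I_k}s_j(B)-\eps$ when $B$ is not compact. As literally stated, choosing a $k$-dimensional $\cE$ with $|B|\geq(s_k(B)-\eps)I$ on $\cE$ only gives the lower bound $k\,(s_k(B)-\eps)$, which is weaker than the sum of the top $k$ singular values. The clean fix is the Ky Fan--type identity $\sum_{j\in\I_k}s_j(B)=\sup\big\{\Preal\sum_{j\in\I_k}\api Be_j,f_j\cpi\big\}$ over orthonormal systems, which you can derive from Theorem \ref{teo colecta} applied to $|B|$ together with the polar decomposition $B=V|B|$ (taking $\{e_j\}$ an approximately diagonalizing orthonormal family for $|B|$ inside $\ker B\orto$ and $f_j=Ve_j$). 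With that lemma in place your proof closes, and the limit $\eps\to0$ gives \eqref{valecon2op}.
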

\proof
Consider 
$U =\bm{cc}I&0 \\0&-I \em \barr {c}\H\\ \cK\earr \in \cU(\H\oplus \cK)$. 
Then $UA-AU\in B(\H\oplus \cK)$ and 
$$
s(UA-AU)=s(A-U^*AU)
=|\la(A-U^*AU)|\da\,,
$$ where we have used items 3. and 4. in Theorem \ref{teo rejunte1}.
By Weyl's inequality for spectral scales (see Theorem \ref{teo rejunte1}) we have that 
$$\la(A-U^*A\, U)\prec \la(A)+\la(- U^*A\, U)=\spr(A)\,,$$
since $\la(- U^*A\, U)=\la(-A)$. 
Using item 2. from Lemma \ref{lema desigrespetanmayo} we deduce that 
$$
s(UA-AU)=|\la(A-U^*A\,U)|	\da \prec_w
|\spr(A)|\da
=\Par \sprm(A)\coma \sprm(A)\,\Pal\da \,,
$$  where $\Par \sprm(A)\coma \sprm(A)\,\Pal\in \ell^\infty(\Z_0)$ is constructed as in 
Eq. \eqref{de a dos}. %Section \ref{sec prelis mayo}.
Straightforward computations and Proposition \ref{hat trick como en el futbol}  show that
$$
UA-AU=\bm{cc}0&2\,B \\-2\,B^*&0 \em \barr {c}\H\\ \cK\earr 
\implies s(UA-AU)=2\,\Par s(B)\coma s(B)\,\Pal\da\ ,
$$ 
and we conclude that 
\beq 
2\,\Par s(B)\coma s(B) \,\Pal\da
\prec_w \Par\sprm(A)\coma \sprm(A)\,\Pal\da\implies 2\,s(B)\prec_w\sprm(A)\ .
\QEDP
\eeq

\pausa
Although simple, the inequality in Eq. \eqref{valecon2op} is a useful result. Indeed, it plays a crucial role in the proof of Theorem \ref{teo doblesprm} below. On the other hand, this inequality can not be improved 
to an entry-wise inequality in the general case $A\in B(\cH)^{sa}$ (see \cite[Remark 2.9]{MSZ2}). 

\begin{cor}
With the notation of Theorem \ref{teo valecon2op}, assume further that $A\in K(\cH)^{sa}$.
Then, for any  unitarily invariant norm $N$ with gauge symmetric function $g_N$, we have that  
$$
2\,N(B)\leq g_N(\sprm(A))\,.
$$
\end{cor}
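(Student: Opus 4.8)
The plan is to derive this corollary as an immediate consequence of Theorem \ref{teo valecon2op} together with the standard machinery for symmetrically normed operator ideals recalled in Remark \ref{rem idealsimetrico}. Since $A \in K(\cH\oplus\cK)^{sa}$ is compact, the off-diagonal block $B \in B(\cK,\cH)$ is compact as well (it is a corner of the compact operator $A$), so $B$ belongs to every symmetrically normed ideal $\cC$ and $N(B) = g_N(s(B))$ makes sense. First I would invoke Theorem \ref{teo valecon2op} to get the submajorization relation $2\,s(B) = s(2B) \prec_w \sprm(A)$ (using that $s$ is positively homogeneous, which follows from Definition \ref{SV} and the homogeneity of $\la(\cdot)$ on positive operators).

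Next, I would observe that $\sprm(A)$ is a non-increasing, non-negative sequence by Proposition \ref{pro rejunte spread} item 1, and moreover $\sprm(A) \in c_0$ because $A$ compact forces $\la_i(A), \la_{-i}(A) \to 0$, hence $\sprm_i(A) = \la_i(A) - \la_{-i}(A) \to 0$; in particular $\sprm(A) = s(D)$ for the diagonal operator $D = D_{\sprm(A)} \in K(\ell^2(\N))^{sa}$ from Eq. \eqref{diago}, so $g_N(\sprm(A))$ is a legitimate expression (possibly $+\infty$ if $D \notin \cC$, in which case the asserted inequality is trivially true). Then I would apply the key implication \eqref{la NUI} from Remark \ref{rem idealsimetrico}: the submajorization $s(2B) \prec_w s(D)$ together with $2B \in K(\cH\oplus\cK)$ gives $g_N(s(2B)) \le g_N(s(D))$, that is, $N(2B) \le g_N(\sprm(A))$. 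Finally, using the homogeneity of the norm, $N(2B) = 2\,N(B)$, which yields the claim $2\,N(B) \le g_N(\sprm(A))$.

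The only mild subtlety — not really an obstacle — is bookkeeping about whether $\sprm(A)$ defines an element of the ideal $\cC$: if it does not, then $g_N(\sprm(A)) = +\infty$ and there is nothing to prove; if it does, then \eqref{la NUI} applies verbatim. So essentially this is a two-line argument: homogeneity of $s(\cdot)$ and $N(\cdot)$, plus the gauge-function/submajorization principle \eqref{la NUI}. I expect no genuine difficulty here; the content is entirely in Theorem \ref{teo valecon2op}.
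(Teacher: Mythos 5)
Your proposal is correct and follows exactly the paper's own route: Theorem \ref{teo valecon2op} gives $2\,s(B)\prec_w\sprm(A)$, and the gauge-function/submajorization principle \eqref{la NUI} of Remark \ref{rem idealsimetrico} converts this into the norm inequality. The extra bookkeeping you supply (compactness of the corner $B$, realizing $\sprm(A)$ as $s(D_{\sprm(A)})$, homogeneity) is all sound and merely makes explicit what the paper leaves implicit.
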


\proof
The inequality follows from Theorem \ref{teo valecon2op} and Remark \ref{rem idealsimetrico}.
\qed

\subsection{Commutator inequalities}\label{sac conmut ineq}

In \cite{Kitt} Kittaneh obtained the following singular value inequalities for commutators of positive compact operators: given $C,D\in K(\H)^+$ 
and a bounded operator $X\in B(\H) $, then 
\beq\label{kitt intro} 
s_i (CX-XD) \leq \| X \| \, s_i(C \oplus D) \ , \peso{ for } i\in \N\,,
\eeq
 where $\|\cdot\|$ denotes the operator (or spectral) norm. It turns out that Eq. \eqref{kitt intro} fails in case $C$ and $D$ are arbitrary self-adjoint compact operators (take  $C=X = I$ and $D=-C$). 

\pausa In what follows we obtain Theorem \ref{teo doblesprm oplus} related to Kittaneh's inequalities above that holds for self-adjoint operators.
We begin with the following observation. 
\begin{lem}\label{lematraza}
Let $A\coma B \in B(\H)^{sa}$ be such that $\rk(A)=n\in\N$. 
Then \rm
\beq \label{eq tr2}
 \tr(AB)\leq \tr\,\big(\,\la(A)\, \cdot\, \la(B)\, \big) \igdef 
\sum_{k\in \Z_0} \la_i(A) \, \la_i(B) \ .
\eeq
\end{lem}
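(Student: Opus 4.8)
The plan is to exploit the finite rank hypothesis $\rk(A)=n$ to reduce the trace $\tr(AB)$ to a finite computation on an $n$-dimensional subspace, and then apply a Ky Fan--type / von Neumann trace inequality in that finite dimensional setting. First I would write $P\in\cP_n(\H)$ for the orthogonal projection onto $R(A)=R(A^*)$ (which has dimension $n$ since $A$ is self-adjoint of finite rank); then $A=PAP$, so $\tr(AB)=\tr(PAPB)=\tr\big((PAP)(PBP)\big)=\tr\big(A_P\,B_P\big)$, where $A_P=PA|_{R(P)}$ and $B_P=PB|_{R(P)}$ are self-adjoint operators on the $n$-dimensional space $R(P)$. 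At this stage everything lives in $B(R(P))^{sa}$ with $\dim R(P)=n$, so I may invoke the classical von Neumann trace inequality for Hermitian matrices: $\tr(A_P B_P)\le \sum_{j=1}^n \mu_j(A_P)\,\mu_j(B_P)$, where $\mu(\cdot)$ denotes the eigenvalue vector in non-increasing order (equivalently $\mu_j=\la_j$ in the finite dimensional notation of Remark \ref{rem en dim finit}).

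Next I would translate the finite dimensional eigenvalues back to the global spectral scale. By the interlacing inequalities in Theorem \ref{teo colecta}, item 1 (Eq. \eqref{PAP}), we have $\la_j(A)\ge \la_j(A_P)$ and $\la_{-j}(A)\le \la_{-j}(A_P)$ for $j\in\I_n$. Since $A=PAP$ has rank $n$, its nonzero eigenvalues are exactly the eigenvalues of $A_P$ together with $n$-many zeros worth of padding on the ``infinite'' side, so in fact $\la_i(A)=\la_i(A_P)$ for those indices where $A_P$ is positive, and $\la_{-i}(A)=\la_{-i}(A_P)$ where $A_P$ is negative, while all remaining $\la_{\pm i}(A)=0$. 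Consequently $\sum_{k\in\Z_0}\la_k(A)\la_k(B)$ picks up precisely the (at most $n$) indices on which $\la_k(A)\ne0$, and on each such index $\la_k(A)=\la_k(A_P)$ while $\la_k(B)\ge \la_k(B_P)$ when $\la_k(A)\ge 0$ and $\la_k(B)\le \la_k(B_P)$ when $\la_k(A)\le 0$ (again by \eqref{PAP}); hence $\la_k(A)\la_k(B)\ge \la_k(A_P)\la_k(B_P)$ term by term. Matching up the positive part of the spectrum of $A$ with the leading eigenvalues of $A_P$ and the negative part with the trailing eigenvalues, the von Neumann bound $\tr(A_PB_P)\le\sum_{j=1}^n\mu_j(A_P)\mu_j(B_P)$ rearranges exactly into $\sum_{k\in\Z_0}\la_k(A)\la_k(B)$ (the products with $\la_k(A)=0$ contribute nothing), which gives \eqref{eq tr2}.

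The main obstacle I anticipate is bookkeeping rather than anything deep: carefully aligning the two indexing conventions --- the purely decreasing finite-dimensional eigenvalue list $\mu(A_P)=(\mu_1,\dots,\mu_n)$ used in von Neumann's inequality versus the two-sided $(\la_i)_{i\in\Z_0}$ scale --- and checking that the ``greedy'' pairing in von Neumann's inequality (largest with largest) is compatible with the pairing $\la_i(A_P)\leftrightarrow\la_i(B)$ forced by interlacing, including the correct sign handling so that negative eigenvalues of $A_P$ get paired with the \emph{smallest} eigenvalues of $B_P$ (which by \eqref{PAP} are dominated by the corresponding $\la_{-i}(B)$, and the inequality flips favorably because of the negative factor). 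Once the sign analysis is organized --- splitting $\I_n$ according to $\mathrm{sgn}\,\mu_j(A_P)$ --- the estimate is immediate. An alternative, slightly slicker route avoiding von Neumann entirely is to write $A_P=\sum_{j} \mu_j(A_P)\, P_j$ as a spectral decomposition into rank-one projections and use $\tr(P_jB_P)\le \la_j(B_P)$ (resp. $\ge \la_{-\cdot}(B_P)$) from Theorem \ref{teo colecta}, item 2, directly; I would likely present whichever of the two is shorter in the final writeup.
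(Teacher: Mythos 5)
Your proof is correct, and the underlying strategy is the same as the paper's: compress everything to the $n$-dimensional range of $A$, prove the inequality there, and pass back to the global spectral scales via the interlacing inequalities \eqref{PAP}, with the sign of each eigenvalue of $A$ deciding which direction of the interlacing is applied. The difference lies in the finite-dimensional core. You invoke the classical von Neumann trace inequality $\tr(A_PB_P)\le\sum_{j=1}^n\mu_j(A_P)\,\mu_j(B_P)$ as a black box, whereas the paper re-derives exactly this bound by hand: it diagonalizes $A$, writes $\tr(AB)$ as a weighted sum of diagonal entries of $B_P$ in that eigenbasis, applies the Schur--Horn theorem to majorize that diagonal by $\la(B_P)$, and then uses the rearrangement/submajorization lemmas of its appendix (items 5--6 of Lemma \ref{lema desigrespetanmayo} and Lemma \ref{dos de Bh}), splitting the index set according to the sign of the eigenvalues of $A$. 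This is also why the paper first reduces to $B\ge 0$ (its rearrangement lemmas require non-negative sequences), a step your route does not need. Your version is shorter at the cost of importing a result not listed in the paper's toolkit; since the Hermitian von Neumann inequality is itself proved by Schur--Horn plus rearrangement, the two arguments are mathematically equivalent. The delicate points --- that $A_P$ is invertible on $R(A)$ so every $\mu_j(A_P)$ has a definite sign, the reindexing of $\sum_{j=1}^n\mu_j(A_P)\mu_j(B_P)$ into the two-sided sum over $i\in\I_r$ and $-i$ with $i\in\I_m$, and the termwise comparison $\la_k(A)\la_k(B)\ge\la_k(A_P)\la_k(B_P)$ with the inequality flipping correctly on the negative side --- are all handled correctly.
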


\proof
Since $A$ has finite rank then it is a trace class operator. In particular, $A\in K(\cH)$ so Remark \ref{rem en dim finit} implies that the entries of $\la(A)$ are the eigenvalues of $A$ counting multiplicities and zeros; hence, 
$\tr (A) = \sum_{k\in \Z_0} \la_i(A) \,$. These last facts show that the inequality \eqref{eq tr2} is invariant if we replace $B$ by $B+ \mu I$, for any $\mu \in \R$; so we can assume that $B \in B(\cH)^+$.

\pausa
We also have that $\la_i(A)\geq 0$ and $\la_{-i}(A)\leq 0$, for $i\in\N$. Moreover, 
there is a ONB  for the range of $A$, denoted $R(A)$, say $\cB= \{x_i\}_{i=-m,\,i\neq 0}^r$ with $m+r=n$, $m,r\geq 0$,  such that $$A=\sum_{i=-m,\,i\neq 0}^r\lambda_i(A)\, x_i\otimes x_i\implies$$
\beq\label{con Bii}
\tr(A\, B)=\tr(B\,A)=\sum_{i=-m,\,i\neq 0}^r \lambda_i(A) \, \tr(B\,x_i\otimes x_i)
=\sum_{i=-m,\,i\neq 0}^r \lambda_i(A)\, \langle B\,x_i\coma x_i\rangle\,.
\eeq
Let $P$  denote the orthogonal projection onto $\text{Span} \{\cB\} %\{x_i\}_{i=-m,\,i\neq 0}^r\, 
= R(A)$. 
Denote by $B_P := PB|_{R(P)} \in \cB(R(A)\,)^+$ the compression of $B$ to $R(A)$. 	
Using the Schur-Horn theorem (see  \cite{bhatia} or \cite{AMRS07}) for $B_P$ and its matrix relative to $\cB$,  we get that 
its ``diagonal"
$$
d= \big(\, \langle Bx_{-m},x_{-m}\rangle,\ldots,\langle Bx_{-1},x_{-1}\rangle,\langle Bx_{1},x_{1}\rangle,\ldots,\langle Bx_{r},x_{r}\rangle \,\big )
\prec (\la_i(B_P)\,)_{i\in\I_n} \ .
$$  
Using Eq. \eqref{dim fin},  we can deduce that 
$d_+ :=(\langle Bx_{i},x_{i}\rangle)_{i\in\I_r}\prec_w   (\la_i(B_P))_{i\in\I_r}$ and, since 
also 
$$
-d \prec (-\la_i(B_P)\,)\da_{i\in\I_n}  \stackrel{\eqref{dim fin}}=  (-\la_{-i}(B_P)\,)_{i\in\I_n} \implies 
d_- := (-\langle Bx_{-i},x_{-i}\rangle)_{i\in\I_m}\prec_w (-\la_{-i}(B_P))_{i\in\I_m} \ .
$$
We now consider the auxiliary vectors $d_+\da=(a_i)_{i\in\I_r}$ and $d_-\da=(b_i)_{i\in\I_m}$ that are obtained 
from $d_+$ and $d_-$ defined above, by rearranging their entries in non-increasing order. 
Using that the vector $(\la_i(A))_{i\in\I_r}$ %and $(-\la_{-i}(A))_{i\in\I_m}$ have 
has non-negative entries and is arranged in non-increasing order then, by items 5. and 6. in Lemma \ref{lema desigrespetanmayo}, %and \cite[pg 54]{bhatia},
 we conclude that 
$$
(\la_i(A)\cdot \langle Bx_{i},x_{i}\rangle )_{i\in\I_r}\prec_w  (\la_i(A)\cdot a_i)_{i\in\I_r} \prec_w (\la_i(A)\cdot \la_i(B_P))_{i\in\I_r}$$ that implies that 
\beq\label{eq una para la traza1}
\sum_{i=1}^r \la_i(A)\cdot \langle Bx_{i},x_{i}\rangle\leq 
\sum_{i=1}^r \la_i(A)\cdot \la_i(B_P)\leq\sum_{i=1}^r \la_i(A)\cdot\la_i(B)\,,
\eeq where the last inequality follows from the interlacing inequalities \eqref{PAP}.
Similarly, using the interlacing inequalities, we see that 
$$
d_-\da =  (b_i)_{i\in\I_m} \prec_w(-\la_{-i}(B_P))_{i\in\I_m}
\stackrel{\eqref{PAP}}{\prec_w} (-\la_{-i}(B))_{i\in\I_m} \in (\R^m)\da \,.
$$
Since $(-\la_{-i}(A))_{i\in\I_m}\in (\R_{\ge 0}^m)\da $ 
then,
by item 2. in Lemma \ref{dos de Bh} %\cite[Problem II.5.16]{bhatia} 
we get that  
\beq\label{eq una para la traza3}
\sum_{i\in\I_m}-\la_{-i}(A)\cdot b_i\leq \sum_{i\in\I_m}\la_{-i}(A)\cdot\la_{-i}(B)\,.
\eeq
On the other hand, since 
the rearrangement of the vector $(\langle Bx_{-i},x_{-i}\rangle)_{i\in\I_m})$ in {\it non-decreasing} 
order coincides with $(-b_i)_{i\in\I_m}\in (\R_{\ge 0}^m)\ua$ then, by item 1. in Lemma \ref{dos de Bh} 
we conclude that 
$$
(\la_{-i}(A)\,b_i)_{i\in\I_m}\prec_w (-\la_{-i}(A)\,\langle Bx_{-i},x_{-i}\rangle)_{i\in\I_m}\,.
$$
The previous submajorization relation and Eq. \eqref{eq una para la traza3} imply that 
\beq\label{eq una para la traza2}
\sum_{i\in\I_m}\la_{-i}(A)\,\langle Bx_{-i},x_{-i}\rangle\leq \sum_{i\in\I_m}-\la_{-i}(A)\, b_i
\leq \sum_{i\in\I_m}\la_{-i}(A)\cdot\la_{-i}(B)\,.
\eeq 
Using the inequalities in Eqs. \eqref{eq una para la traza1} and 
\eqref{eq una para la traza2} we now see that  
$$
\tr(A\, B)\stackrel{\eqref{con Bii}}=\sum_{i=-m,\,i\neq 0}^r \lambda_i(A)\, \langle B\,x_i\coma x_i\rangle\leq 
\sum_{i=-m,\,i\neq 0}^r \lambda_i(A)\,\la_i(B)\, ,
$$
which completes the proof.
\qed

\begin{teo}\label{teo doblesprm}
Let $A,X\in B(\cH)^{sa}$. If we let ${\rm i}=\sqrt{-1}$ then 
\beq\label{eq agreg1}
\lambda \, \big(\, {\rm i}\, (A\,X-X\,A)\, \big) 
\prec_w \frac{1}{2}\,\sprm(A)\, \cdot\,  \sprm(X)\,.
\eeq
\end{teo}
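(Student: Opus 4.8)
The plan is to deduce the submajorization $\la\big(\,{\rm i}\,(AX-XA)\,\big)\prec_w\frac12\,\sprm(A)\cdot\sprm(X)$ from the family of partial-sum inequalities
$$
\sum_{i=1}^{k}\la_i\big(\,{\rm i}\,(AX-XA)\,\big)\ \le\ \frac12\,\sum_{i=1}^{k}\sprm_i(A)\,\sprm_i(X)\coma \qquad k\in\N\ ,
$$
which is legitimate because the right-hand side is already a non-negative non-increasing sequence, so these inequalities are exactly the asserted relation. First note that ${\rm i}\,(AX-XA)\in B(\cH)^{sa}$, since $AX-XA$ is skew-adjoint. By item 2 of Theorem \ref{teo colecta} one has $\sum_{i=1}^{k}\la_i\big(\,{\rm i}\,(AX-XA)\,\big)=\sup_{P\in\cP_k(\cH)}\tr\big(P\,{\rm i}\,(AX-XA)\,P\big)$, so it suffices to bound $\tr\big(P\,{\rm i}\,(AX-XA)\,P\big)$ by $\frac12\sum_{i=1}^{k}\sprm_i(A)\,\sprm_i(X)$ uniformly over $P\in\cP_k(\cH)$.

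Fix such a $P$. Since $P$ has finite rank all the products below are trace class, so cyclicity of the trace is available, and using $P^2=P$ one obtains the identity
$$
\tr\big(P\,{\rm i}\,(AX-XA)\,P\big)=\tr\big({\rm i}\,(AX-XA)\,P\big)=\tr\big({\rm i}\,(PA-AP)\,X\big)\ .
$$
Now $Y\igdef {\rm i}\,(PA-AP)$ is self-adjoint (because $PA-AP$ is skew-adjoint), it has finite rank (at most $2k$), and with respect to the decomposition $\cH=R(P)\oplus R(P)^\perp$ it is the anti-diagonal operator $Y=\bm{cc}0&{\rm i}\,A_0\\-{\rm i}\,A_0^*&0\em$, where $A_0\igdef PA|_{R(P)^\perp}\in B(R(P)^\perp,R(P))$ is precisely the off-diagonal block of $A$. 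Hence, by Remark \ref{caso K}, the spectral scale of $Y$ is $\la_j(Y)=s_j(A_0)$ and $\la_{-j}(Y)=-s_j(A_0)$ for $j\in\N$ (with $s_j(A_0)=0$ for $j>\rk(A_0)\le k$). Applying Lemma \ref{lematraza} to the finite-rank self-adjoint $Y$ and to $X$, and pairing the terms with opposite indices, gives
$$
\tr\big(P\,{\rm i}\,(AX-XA)\,P\big)=\tr(Y\,X)\ \le\ \sum_{j\in\Z_0}\la_j(Y)\,\la_j(X)\ =\ \sum_{j=1}^{k}s_j(A_0)\,\big(\la_j(X)-\la_{-j}(X)\big)\ =\ \sum_{j=1}^{k}s_j(A_0)\,\sprm_j(X)\ .
$$

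To close the estimate, apply Theorem \ref{teo valecon2op} to $A$ written in block form relative to $\cH=R(P)\oplus R(P)^\perp$: this yields $2\,s(A_0)\prec_w\sprm(A)$. Since $\sprm(X)$ is non-negative and non-increasing, and the entry-wise product of two non-negative non-increasing sequences is again non-increasing, multiplying the submajorization $2\,s(A_0)\prec_w\sprm(A)$ by $\sprm(X)$ (the corresponding item of Lemma \ref{lema desigrespetanmayo}) gives $\sum_{j=1}^{k}2\,s_j(A_0)\,\sprm_j(X)\le\sum_{j=1}^{k}\sprm_j(A)\,\sprm_j(X)$. Combined with the previous step, $\tr\big(P\,{\rm i}\,(AX-XA)\,P\big)\le\frac12\sum_{j=1}^{k}\sprm_j(A)\,\sprm_j(X)$ for every $P\in\cP_k(\cH)$; taking the supremum over $P$ proves the partial-sum inequality for this $k$, and since $k$ was arbitrary the proof is complete.

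I expect the only genuinely delicate point to be the reduction in the middle paragraph: the trace identity $\tr(P\,{\rm i}\,(AX-XA)\,P)=\tr({\rm i}\,(PA-AP)\,X)$ together with the observation that ${\rm i}\,(PA-AP)$ is an anti-diagonal self-adjoint operator whose spectral scale is $(\pm s_j(A_0))_j$ --- i.e.\ that compressing the commutator to a finite-rank projection only ``sees'' the off-diagonal block $A_0$ of $A$. Once this is recognized, Theorem \ref{teo valecon2op} (which controls $s(A_0)$ by $\sprm(A)$) and Lemma \ref{lematraza} (which separates the contributions of $A$ and $X$) do the rest; the one bookkeeping subtlety is that $\rk(A_0)\le k$, which is what keeps both sides of the final inequality indexed by the same $k$.
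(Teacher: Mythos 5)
Your proof is correct and follows essentially the same route as the paper's: reduce to partial sums via Theorem \ref{teo colecta}, use cyclicity of the trace to transfer $P$ onto one factor so that Lemma \ref{lematraza} applies to a finite-rank anti-diagonal commutator, identify its spectral scale as $(\pm s_j(\cdot))$, and invoke Theorem \ref{teo valecon2op} together with Lemma \ref{lema desigrespetanmayo}. The only (immaterial) difference is that you move $P$ onto $A$ and control the off-diagonal block of $A$ by $\sprm(A)$, whereas the paper moves $P$ onto $X$ and controls the off-diagonal block of $X$ by $\sprm(X)$.
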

\proof
Let $\varepsilon>0$ and $k\in\N$; by Theorem \ref{teo colecta}, there exists an orthogonal projection $P\in B(\H)$ with $k=\tr(P)$ such that 
$$
\sum_{j=1}^k \la_j\, \big(\, {\rm i}\, (A\,X-X\,A)\, \big)\leq \tr\, \big(\, {\rm i}\, (A\,X-X\,A)\,P\,\big)+\varepsilon\,.
$$
Moreover, since $XP-PX$ has finite rank then, by Lemma \ref{lematraza}, we get that
$$
\tr\, \big(\,{\rm i}\,(AX-XA) \,P\,\big)=
\tr\, \big(\,{\rm i}\,(XP-PX) \,A\,\big)\leq \tr\, \big(\,\la({\rm i}\,(XP-PX)\,)\cdot  \la(A)\,\big)\,.
$$
\pausa 
Now consider the block matrix representations induced by $P$:
$$
X=
\bm{cc}
 X_{11} & X_{12} \\ X_{12}^*& X_{22}
\em
\py P=\bm{cc}
 1 & 0 \\ 0& 0\em
\implies 
{\rm i}\,(XP-PX)={\rm i}\,
\bm{cc}
 0 & -X_{12} \\ X_{12}^*& 0\em\,.
$$
Denote $X_{12}=B\in K(\H)$; then by Proposition \ref{hat trick como en el futbol}
$\lambda ({\rm i}\,(XP-PX))=(s(B)\coma -s(B^*))^{\uparrow\downarrow}$. Now, Theorem \ref{teo valecon2op} implies that 
$s(B)\prec_w \frac{1}{2}\,\sprm(X)$. The previous fact together with by item 6. in 
Lemma \ref{lema desigrespetanmayo} show that
\beq \label{eq desi con submayo y productos} 
s(B) \cdot \sprm(A)\prec_w \frac{1}{2}\,\sprm(X)\cdot \sprm(A)\,.
\eeq
Moreover, if we let $k'\leq k$ be the 
number
of non-zero singular values of $B$ then 
\begin{eqnarray*}
\tr\, \big(\,\la({\rm i}\,(XP-PX)\,) \cdot \la(A)\,\big)&=&
 \sum_{j=1}^{k'} s_j(B)\,\la_j(A) - \sum_{j=1}^{k'} s_j(B) \, \la_{-j}(A)\\ \\ 
&=& \sum_{j=1}^{k'} s_j(B)\, \sprm_j(A) 
\stackrel{\eqref{eq desi con submayo y productos}}{\leq} 
\frac{1}{2} \sum_{j=1}^{k'} \sprm_j(X)\, \sprm_j(A)\,.
 \end{eqnarray*} 
Combining the previous arguments it is clear that
$$
\sum_{j=1}^k \la_j\, \big(\,{\rm i}\,(AX-XA)\,\big)\leq 
\frac{1}{2} \sum_{j=1}^{k'} \sprm_j(X)\, \sprm_j(A)+\varepsilon\leq 
\frac{1}{2} \sum_{j=1}^{k} \sprm_j(X)\, \sprm_j(A)+\varepsilon\,.
$$
Since $\varepsilon>0$ and $k\in\N$ were arbitrary, we get the submajorization relation in Eq. \eqref{eq agreg1}.
\qed

\pausa Although Theorem \ref{teo doblesprm} contains much information, its statement is rather technical. For example, the submajorization of Eq. \eqref{eq agreg1} only gives information about 
the ``positive part" of $\la({\rm i}\,(AX-XA)\,)$, namely   $ (\la_j({\rm i}\,(AX-XA)))_{j\in\N}\,$. 
The next result, which is a consequence of  Theorem \ref{teo doblesprm}, is more 
clear, and it has several direct implications (see Corollary \ref{cor commutator viejo} below).

\pausa
\begin{teo}\label{teo doblesprm oplus}
Let $A,X\in B(\cH)^{sa}$. Then 
\beq\label{eq teo prin conmut 1}
s(AX-XA)\prec_w\frac{1}{2}\, \sprm(A\oplus A)\, \cdot\,  \sprm(X\oplus X)\,.
\eeq
If we further assume that $A$ or $X\in K(\cH)^{sa}$ %$A,\,X\in K(\cH)^{sa}$ 
then, for any  unitarily invariant norm $N$ with gauge symmetric function $g_N$, 
\beq \label{eq teo prin conmut 2}
N(AX-XA)\leq \frac{1}{2}\, g_N\big(\,\sprm(A\oplus A)\cdot  \sprm(X\oplus X)\,\big)\,.
\eeq
\end{teo}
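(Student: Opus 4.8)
The plan is to deduce the $\oplus$-version from Theorem \ref{teo doblesprm} by a standard ``doubling'' trick, reducing the statement about $s(AX-XA)$ to a statement about the positive part of a spectral scale. First I would observe that for any self-adjoint $Y \in B(\cH)^{sa}$ one has $s(Y) = |\la(Y)|\da$ by item \ref{modulo} of Theorem \ref{teo rejunte1}, and $|\la(Y)|\da = \la(Y \oplus (-Y))\da$ since the spectral scale of $Y\oplus(-Y)$ collects together the values $\la_i(Y)$ and $-\la_{-i}(Y) = \la_i(-Y)$; more precisely $\la\big({\rm i}(AX-XA)\big)$ paired with its negative rearranges to $s(AX-XA)$. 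Since ${\rm i}(AX-XA)$ is self-adjoint, the cleanest route is: apply Theorem \ref{teo doblesprm} not to the pair $(A,X)$ directly but to the pair $(A\oplus A,\, X\oplus(-X))$ (or $(A\oplus(-A),\,X\oplus X)$), after checking that $[\,A\oplus A,\, X\oplus(-X)\,] = (AX-XA)\oplus(-(AX-XA))$ up to sign, so that ${\rm i}$ times this commutator is a self-adjoint operator whose positive spectral values are exactly $s(AX-XA)$.

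The key steps, in order: (i) Set $C = {\rm i}(AX-XA) \in B(\cH)^{sa}$ and note $C \oplus (-C)$ has $\la(C\oplus(-C))\da = s(C) = s(AX-XA)$, using that the spectral scale of a self-adjoint operator and its negation interleave to give the rearranged modulus, together with $s(AX-XA)=s({\rm i}(AX-XA))$. (ii) Recognize $C \oplus (-C) = {\rm i}\big[(A\oplus A)(X\oplus(-X)) - (X\oplus(-X))(A\oplus A)\big]$ by block-diagonal multiplication. (iii) Apply Theorem \ref{teo doblesprm} with $A \rightsquigarrow A\oplus A$ and $X \rightsquigarrow X\oplus(-X)$ to get $\la\big(C\oplus(-C)\big) \prec_w \frac12\,\sprm(A\oplus A)\cdot \sprm(X\oplus(-X))$, where $\sprm(X\oplus(-X)) = \sprm(X\oplus X)$ by Proposition \ref{pro rejunte spread} item 3 (the spectral spread is invariant under $X\mapsto -X$, and $\sprm(U^*YU)=\sprm(Y)$ for unitary $U$, so the summand order does not matter). (iv) Since the left side of the $\prec_w$ relation restricted to $\N$ is exactly $s(AX-XA)$ by step (i), conclude Eq. \eqref{eq teo prin conmut 1}. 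Finally, for Eq. \eqref{eq teo prin conmut 2}, if $A$ or $X$ is compact then $AX-XA$ is compact, and the norm inequality follows from Eq. \eqref{eq teo prin conmut 1} together with \eqref{la NUI} in Remark \ref{rem idealsimetrico}.

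The main obstacle I anticipate is bookkeeping in step (i): making sure that passing from $\la\big(C\oplus(-C)\big)$ to $s(AX-XA)$ is done correctly, i.e. that $\big(\la_i(C\oplus(-C))\big)_{i\in\N} = s(C)$ and that this equals $s(AX-XA)$. This uses $s(Y)=s(Y^*)$ and $s(cY)=|c|\,s(Y)$ from Theorem \ref{teo rejunte1}, plus the description in Remark \ref{caso la}/\eqref{sumando B} of how block-diagonal sums affect the spectral scale; one must verify that the positive spectral values of $C\oplus(-C)$ are precisely the entries of $|\la(C)|\da$. A secondary (purely cosmetic) point is justifying $\sprm(X\oplus(-X)) = \sprm(X\oplus X)$: conjugating by the unitary $I\oplus(-I)$ sends $X\oplus(-X)$ to... not quite $X\oplus X$; instead one uses directly that $\sprm$ depends only on the multiset of spectral-scale values, and $\la(X\oplus(-X))$ and $\la(X\oplus X)$ produce the same full spread sequence after rearrangement, since $\sprm(-X)=\sprm(X)$. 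Alternatively, one can simply apply Theorem \ref{teo doblesprm} to $(A\oplus(-A), X\oplus X)$ and check $[\,A\oplus(-A),X\oplus X\,]=(AX-XA)\oplus(-(AX-XA))$, which sidesteps the issue entirely and is the version I would actually write.
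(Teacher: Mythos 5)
Your reduction breaks at the identity $\sprm(X\oplus(-X))=\sprm(X\oplus X)$, which is false. The spectral spread of a direct sum is not determined by the multisets of spectral values of the summands taken separately: by Eq. \eqref{eq nueva para comp2} one has $\sprm(Y\oplus(-Y))=2\,s(Y)$, whereas $\sprm(Y\oplus Y)=(\sprm(Y),\sprm(Y))\da$ by Proposition \ref{pro rejunte spread2}; for $X=I$ these are $2\cdot\uno$ and $0$, respectively. The invariances you invoke do not apply here, since $X\oplus(-X)$ is neither $-(X\oplus X)$ nor, in general, unitarily equivalent to $X\oplus X$. Consequently, applying Theorem \ref{teo doblesprm} to the pair $(A\oplus A,\,X\oplus(-X))$ honestly yields
$$
s(AX-XA)\ \prec_w\ \tfrac12\,\sprm(A\oplus A)\cdot\sprm(X\oplus(-X))\ =\ \sprm(A\oplus A)\cdot s(X)\,,
$$
and your alternative pair $(A\oplus(-A),\,X\oplus X)$ yields $s(AX-XA)\prec_w s(A)\cdot\sprm(X\oplus X)$. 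Both statements are true, but both are strictly weaker than Eq. \eqref{eq teo prin conmut 1}: since $\tfrac12\,\sprm(X\oplus X)\prec_w s(X)$, Lemma \ref{lema desigrespetanmayo} (item 6) shows that the bound claimed in the theorem is itself submajorized by yours. What your argument recovers is essentially Corollary \ref{cor commutator viejo} with $A=B$, not the theorem.

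The repair is to double the bounding \emph{sequences} rather than the \emph{operators}. Set $C={\rm i}\,(AX-XA)$ and apply Theorem \ref{teo doblesprm} twice, to $(A,X)$ and to $(A,-X)$: this controls $(\la_j(C))_{j\in\N}$ and $(\la_j(-C))_{j\in\N}$ each by $\tfrac12\,\sprm(A)\cdot\sprm(X)$, where now the identity $\sprm(-X)=\sprm(X)$ is legitimately used on a single operator. Lemma \ref{lema desigrespetanmayo}, item 4, then concatenates the two submajorizations, and since $s(C)=\big((\la_j(C))_{j\in\N},(\la_j(-C))_{j\in\N}\big)\da$ while $\big(\tfrac12\,\sprm(A)\cdot\sprm(X)\coma\tfrac12\,\sprm(A)\cdot\sprm(X)\big)\da=\tfrac12\,\sprm(A\oplus A)\cdot\sprm(X\oplus X)$, Eq. \eqref{eq teo prin conmut 1} follows. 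Your step (i) and the final passage from \eqref{eq teo prin conmut 1} to unitarily invariant norms via Remark \ref{rem idealsimetrico} are fine.
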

\proof
Using Eq. \eqref{eq agreg1} applied to
 $A, -X \in B(\cH)^{sa}$, 
$$
 \,\la(-{\rm i}\,(AX-XA))\prec_w \frac{1}{2}\,\sprm(A)\cdot \sprm(-X)=\frac{1}{2}\,\sprm(A)\cdot \sprm(X)\,,
$$ where ${\rm i}=\sqrt{-1}$ and we used that $\sprm(-X)=\sprm(X)$. 
By the comments after Definition \ref{defi espec scale}, item 4. in Theorem \ref{teo rejunte1}
 and item 4.  in Lemma \ref{lema desigrespetanmayo} we have that
\begin{eqnarray*}
s(AX-XA)&=&\big(\, (\la_j({\rm i}\,(\,AX-XA)\,)\,)_{j\in\N}\coma 
(\la_j(-\,{\rm i}\,(AX-XA)\,)_{j\in\N}\big)\da\\
\\ &\prec_w& \frac{1}{2}\,(\, \sprm(A)\cdot \sprm(X)\coma \sprm(A)\cdot \sprm(X)\, )\da\\ \\ &=&\frac{1}{2}\,
 \sprm(A\oplus A)\cdot  \sprm(X\oplus X)\,,
\end{eqnarray*} which proves Eq. \eqref{eq teo prin conmut 1}.
Finally, by Eq. \eqref{la NUI} in Remark \ref{rem idealsimetrico} we know that  
\eqref{eq teo prin conmut 1} $\implies$ \eqref{eq teo prin conmut 2}.
\qed

\pausa
A statement that is formally analogous to Theorem \ref{teo doblesprm oplus} is still valid in the matrix case, with the definition of spread given in  \cite{MSZ2}. Indeed, the proof of such claim can be obtained by a straightforward adaptation of the proof of Theorem \ref{teo doblesprm oplus}.
The next result, which is formally analogous to \cite[Theorem 3.1.]{MSZ2} (and played a central role in 
\cite{MSZ3}) is proved here with a new approach,
based on Theorem \ref{teo doblesprm oplus}.

\begin{cor}\label{cor commutator viejo}
Let $A,B\in B(\H)^{sa}$, $A \oplus B=\bm{cc}A&0\\
0&B \em\in B(\H\oplus \H)^{sa}$ and $X\in B(\H)$. Then, 
\beq\label{eq commutator viejo} 
s(AX-XB)\prec_w  \sprm(A\oplus B)\, \cdot\, s(X)\,.
\eeq
\end{cor}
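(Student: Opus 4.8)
The plan is to deduce the inequality from Theorem~\ref{teo doblesprm oplus} by passing to the Hilbert space $\cH\oplus\cH$ and replacing the (possibly non self-adjoint) operator $X$ by a self-adjoint ``off-diagonal dilation''. Concretely, I would first put $C=A\oplus B\in B(\cH\oplus\cH)^{sa}$ and
$$
Y=\bm{cc} 0 & X\\ X^*& 0\em\in B(\cH\oplus\cH)^{sa}\,,
$$
and compute the commutator, which (using $A^*=A$, $B^*=B$) equals
$$
CY-YC=\bm{cc} 0 & AX-XB \\ -(AX-XB)^*& 0 \em\,.
$$
Proposition~\ref{hat trick como en el futbol}, applied to these anti-diagonal operators, then gives $s(CY-YC)=\big(s(AX-XB)\coma s(AX-XB)\big)\da$ and $\la(Y)=\big(s(X)\coma-s(X^*)\big)^{\daua}$; the latter yields $\sprm(Y)=s(X)+s(X^*)=2\,s(X)$, since $s(X)=s(X^*)$ by Remark~\ref{sing de transf}.

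Next I would expand the right-hand side of Theorem~\ref{teo doblesprm oplus} for the pair $C,Y$. Applying the first identity of Eq.~\eqref{AmasA} twice,
$$
\sprm(C\oplus C)=\big(\sprm(A\oplus B)\coma\sprm(A\oplus B)\big)\da
\py
\sprm(Y\oplus Y)=2\,\big(s(X)\coma s(X)\big)\da\,.
$$
Since $\sprm(A\oplus B)$ and $s(X)$ are non-negative and non-increasing, each of these sequences has the ``doubled'' shape $(u_1,u_1,u_2,u_2,\dots)$, so their entrywise product (Eq.~\eqref{prod}) is again of that shape, and
$$
\frac{1}{2}\,\sprm(C\oplus C)\cdot\sprm(Y\oplus Y)
=\big(\,\sprm(A\oplus B)\cdot s(X)\coma\,\sprm(A\oplus B)\cdot s(X)\,\big)\da\,,
$$
using that $\sprm(A\oplus B)\cdot s(X)$ is itself non-increasing.

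Then I would invoke Theorem~\ref{teo doblesprm oplus} (with $\cH$ replaced by $\cH\oplus\cH$, which is again separable and infinite dimensional) to obtain
$$
\big(s(AX-XB)\coma s(AX-XB)\big)\da\prec_w\big(\sprm(A\oplus B)\cdot s(X)\coma\sprm(A\oplus B)\cdot s(X)\big)\da\,.
$$
Finally, comparing partial sums over the first $2k$ indices of these two doubled non-increasing sequences (each such partial sum being twice the sum of the first $k$ entries of the underlying sequence), the factor $2$ cancels and one is left with $s(AX-XB)\prec_w\sprm(A\oplus B)\cdot s(X)$, which is exactly Eq.~\eqref{eq commutator viejo}. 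This last ``de-doubling'' step is precisely the one used at the end of the proof of Theorem~\ref{teo valecon2op}, so it poses no new difficulty.

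The only place requiring care is the combinatorial bookkeeping: one must verify that the interleaving created by $\oplus$ (through Eq.~\eqref{AmasA}) is compatible with the entrywise product $\cdot$ and with the anti-diagonal singular-value formulas of Proposition~\ref{hat trick como en el futbol}. Because every sequence involved is non-negative and non-increasing, these rearrangements are legitimate and the displayed identities hold on the nose; nonetheless this is where a stray index shift or sign could slip in, so I would write it out explicitly. No step beyond Theorem~\ref{teo doblesprm oplus} requires a genuinely new idea.
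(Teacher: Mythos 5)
Your proof is correct and follows essentially the same route as the paper: both pass to $C=A\oplus B$ and the self-adjoint off-diagonal dilation $\hat X$ and then invoke Theorem \ref{teo doblesprm oplus} together with a de-doubling of partial sums. The only (harmless) difference is that the paper first settles the case $A=B$, $X=X^*$ using the submajorization $\tfrac12\,\sprm(X\oplus X)\prec_w s(X)$ from Eq. \eqref{AmasA} and item 6 of Lemma \ref{lema desigrespetanmayo}, whereas you apply Theorem \ref{teo doblesprm oplus} directly to the dilated pair and exploit the exact identity $\sprm(\hat X)=2\,s(X)$ from Proposition \ref{hat trick como en el futbol}, which slightly streamlines the bookkeeping.
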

\proof
First take $A=B$ and assume that $X\in B(\cH)^{sa}$. By Theorem \ref{teo doblesprm oplus}, 
Proposition \ref{pro rejunte spread2} (item 2.) and 
Lemma \ref{lema desigrespetanmayo} (item 6.), we have that
$$s(AX-XA)\prec_w \frac{1}{2} \, \sprm(A\oplus A) \cdot \sprm(X\oplus X)
\stackrel{\eqref{AmasA}}{\prec _w} \sprm(A\oplus A)\cdot  s(X)\,.$$
In the general case, let $C=A\oplus B\in B(\H\oplus \H)^{sa}$ and $\hat{X}=\bm{cc} 0&X\\
X^*&0 \em\in B(\H\oplus \H)^{sa}$\,.
Then $$C\hat{X}-\hat{X}C=\bm{cc} 0& AX-XB\\
(AX-XB)^* & 0 \em\,,$$ and, by the first part of the proof  $$s(C\hat{X}-\hat{X}C)=(s(AX-XB)\coma s(AX-XB)\,)\da\prec_w \sprm(C\oplus C)\cdot  s(\hat X)\,.$$ Notice that Eq. \eqref{eq commutator viejo} follows from the previous submajorization relation, since
\beq%$$
s(\hat X)=(s(X)\coma s(X))\da \py 
\sprm(C\oplus C)=(\sprm(A\oplus B)\coma \sprm(A\oplus B))\da \ .
\QEDP
\eeq
\begin{rem}
Notice that inequality  \eqref{eq commutator viejo} cannot be improved to an entry-wise inequality, 
as Kittaneh's inequality in Eq. \eqref{kitt intro} for the positive case. 
For example, we can  consider 
$$A=\bm{cc}%\begin{pmatrix}
	1&0\\
	0&1
\em%\end{pmatrix}
\peso{,}
B=\bm{cc}%\begin{pmatrix}
	1&2\\
	2&1
\em%\end{pmatrix}
 \py X=\bm{cc}%\begin{pmatrix}
2&1\\
1&2
\em %\end{pmatrix}
\,,$$ all of them  embedded on $K(\H)^{sa}$ . Then 
$$
\la(A\oplus B)=\big(\, (\dots \coma 0\coma -1) \coma 
(3\coma 1\coma 1\coma 0\coma \dots )\, \big) \quad , \quad s(A\,X-X\,B)=(6\coma 2\coma 0\coma \dots)   
$$
and $s(X)=(3\coma 1\coma 0\coma \dots)$. 
Therefore   $s_2(A\,X-X\,B)=2>\sprm_2(A\oplus B)\,s_2(X)=1$. 
\EOE
\end{rem}

\pausa
The next result gives an upper bound for the generalized singular values of commutators of the form $AX-XB$, when $A,B,\,X\in B(\H)$ are arbitrary operators (for related results see \cite{KitHir}).

\begin{teo}\label{teo conmutadores largo}
Let $A,B $ and $X\in B(\H)$. Consider $A=A_1+{\rm i}\, A_2$, 
$B=B_1+{\rm i}\, B_2$ where $A_j,B_j\in B(\H)^{sa}$ for $j=1,2$ and ${\rm i}=\sqrt{-1}$. Then \beq\label{eq part real e im}
s(AX-XB)\prec_w \big(\sprm(A_1\oplus B_1)+\sprm(A_2\oplus B_2)\,\big)\cdot s(X)\,.
\eeq
\end{teo}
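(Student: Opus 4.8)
The plan is to reduce the statement to Corollary~\ref{cor commutator viejo} by splitting the commutator $AX-XB$ into its Hermitian pieces. Writing $A=A_1+{\rm i}\,A_2$ and $B=B_1+{\rm i}\,B_2$ with all $A_j,B_j\in B(\H)^{sa}$, expand
\[
AX-XB=(A_1X-XB_1)+{\rm i}\,(A_2X-XB_2)\,.
\]
Now apply Weyl's inequality for generalized singular values (item 2.\ in Theorem~\ref{teo rejunte1}, Eq.~\eqref{eq desi Weyl}), which gives
\[
s(AX-XB)\prec_w s(A_1X-XB_1)+s({\rm i}\,(A_2X-XB_2))=s(A_1X-XB_1)+s(A_2X-XB_2)\,,
\]
using that multiplication by the scalar ${\rm i}$ does not change singular values (item 3.\ in Theorem~\ref{teo rejunte1}, since $|{\rm i}\,Z|=|Z|$).

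Next I would apply Corollary~\ref{cor commutator viejo} to each of the two self-adjoint-on-self-adjoint commutators separately: since $A_1,B_1\in B(\H)^{sa}$ and $X\in B(\H)$ is arbitrary, Eq.~\eqref{eq commutator viejo} yields $s(A_1X-XB_1)\prec_w \sprm(A_1\oplus B_1)\cdot s(X)$, and likewise $s(A_2X-XB_2)\prec_w \sprm(A_2\oplus B_2)\cdot s(X)$. Adding these two submajorization relations (submajorization is preserved under sums of sequences) gives
\[
s(A_1X-XB_1)+s(A_2X-XB_2)\prec_w \big(\sprm(A_1\oplus B_1)+\sprm(A_2\oplus B_2)\big)\cdot s(X)\,,
\]
where on the right I have used the distributivity of the entrywise product over the sum (Eq.~\eqref{prod}). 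Chaining this with the previous display and transitivity of $\prec_w$ delivers Eq.~\eqref{eq part real e im}.

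The only delicate point is bookkeeping with the entrywise products: both $\sprm(A_j\oplus B_j)$ and $s(X)$ are non-increasing sequences in $\ell^\infty(\N)$, so $\sprm(A_j\oplus B_j)\cdot s(X)$ is itself non-increasing, and the sum of two non-increasing non-negative sequences is non-increasing; hence no rearrangement ($\da$) is needed when forming the right-hand side, and the additivity of $\prec_w$ under termwise sums applies directly. I expect this to be essentially the whole proof — the substantive work is already packaged in Corollary~\ref{cor commutator viejo} — so the main (minor) obstacle is merely checking that the sequence algebra is consistent with the conventions of Definition~\ref{defi permutaciones sucs}, in particular that all sequences involved are indexed by $\N$ and non-increasing, so that $\prec_w$ between them is the elementary one (partial sums of the sequences themselves, with no sorting).
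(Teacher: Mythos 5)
Your proof is correct and follows essentially the same route as the paper's: decompose $AX-XB$ into the two Hermitian commutators, apply Weyl's inequality \eqref{eq desi Weyl}, then invoke Corollary~\ref{cor commutator viejo} on each piece and add. Your extra care about the right-hand sides being non-increasing (so that the two submajorization relations may be summed, cf.\ item 3.\ of Lemma~\ref{lema desigrespetanmayo}) is a point the paper leaves implicit, but the argument is the same.
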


\proof
Notice that $$ AX-XB=(A_1+{\rm i}\, A_2)X-X(B_1+{\rm i}\, B_2)=A_1X-XB_1+{\rm i}\, (A_2X-XB_2)\,. $$ Then by Weyl's inequality for generalized singular values 
(see Eq. \eqref{eq desi Weyl}) and Corollary \ref{cor commutator viejo}, 
$$\barr{rcl}
s(AX-XB)&\prec_w& s(A_1X-XB_1)+s(A_2X-XB_2) \\  \\ 
&\prec_w& \sprm(A_1\oplus B_1) \cdot  s(X)+ \sprm(A_2\oplus B_2)\cdot  s(X)
\\ \\ 
&=&(\sprm(A_1\oplus B_1)+\sprm(A_2\oplus B_2)\,)\cdot  s(X)\,. %\hfill\QED
\earr 
$$
\qed

\begin{cor}\label{coro conmutadores largo}
Let $A,B \in B(\H)$ and ${\rm i}= \sqrt{-1}$. Consider $A=A_1+{\rm i}\, A_2$, 
$B=B_1+{\rm i}\, B_2$ where $A_j,B_j\in B(\H)^{sa}$ for $j=1,2$. Let $a_j, a_j',b_j,b_j'\in \R$ be such that 
$$
a_j\, I \leq A_j\leq a_j' \, I \py b_j\, I \leq B_j\leq b_j'\, I \peso{for} j=1,2\,.$$ Then, for $X\in K(\H)$ and every unitarily invariant norm $N(\cdot)$ we have that 
$$ 
N(AX-XB)\leq \big(\,
\max\{a_1',b_1' \}-\min\{a_1,b_1\}+\max\{a_2',b_2'\}-\min\{a_2,b_2\} \,\big)\, N(X)\,.
$$
\end{cor}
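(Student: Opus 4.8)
The plan is to derive Corollary \ref{coro conmutadores largo} directly from Theorem \ref{teo conmutadores largo} by controlling the spectral spreads $\sprm(A_j\oplus B_j)$ in terms of the given scalar bounds. First I would observe that for any self-adjoint $C\in B(\cH)^{sa}$ with $c\,I\le C\le c'\,I$, the spectral scale satisfies $c\le\la_{-i}(C)\le\la_i(C)\le c'$ for all $i$, hence $\sprm_i(C)=\la_i(C)-\la_{-i}(C)\le c'-c$ for every $i\in\N$; that is, $\sprm(C)\le (c'-c)\,\uno$ entry-wise, where $\uno$ is the constant sequence. Applying this with $C=A_j\oplus B_j$, and noting that $\min\{a_j,b_j\}\,I\le A_j\oplus B_j\le\max\{a_j',b_j'\}\,I$ on $\H\oplus\H$, gives
$$
\sprm(A_j\oplus B_j)\le\big(\max\{a_j',b_j'\}-\min\{a_j,b_j\}\big)\,\uno\peso{for}j=1,2\,.
$$

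Next I would combine these two bounds additively. Setting $M_j=\max\{a_j',b_j'\}-\min\{a_j,b_j\}$ for $j=1,2$, the entry-wise bound yields
$$
\big(\sprm(A_1\oplus B_1)+\sprm(A_2\oplus B_2)\big)\cdot s(X)\le (M_1+M_2)\,s(X)
$$
entry-wise, and hence also in the submajorization order. Feeding this into Theorem \ref{teo conmutadores largo} (Eq. \eqref{eq part real e im}) and using transitivity of $\prec_w$, we obtain $s(AX-XB)\prec_w (M_1+M_2)\,s(X)$.

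Finally, since $X\in K(\H)$, the operator $AX-XB$ is compact, and for a scalar multiple $s((M_1+M_2)X)=(M_1+M_2)\,s(X)$; applying Eq. \eqref{la NUI} from Remark \ref{rem idealsimetrico} with $B=(M_1+M_2)X\in\cC$ (any symmetrically normed ideal, in particular the one associated to $N$) gives $N(AX-XB)\le g_N((M_1+M_2)\,s(X))=(M_1+M_2)\,N(X)$, which is precisely the claimed inequality. I do not expect a genuine obstacle here: the only point requiring a moment's care is the passage from the operator inequality $c\,I\le C\le c'\,I$ to the entry-wise bound on $\sprm(C)$, which follows immediately from the monotonicity of the min-max characterization in Definition \ref{defi espec scale} (or from the interlacing/translation properties in Proposition \ref{pro rejunte spread} and Theorem \ref{teo colecta}); everything else is bookkeeping with the constants $\max$ and $\min$ over the blocks.
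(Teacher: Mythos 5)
Your proposal is correct and follows essentially the same route as the paper: bound $\sprm(A_j\oplus B_j)$ entry-wise by $\big(\max\{a_j',b_j'\}-\min\{a_j,b_j\}\big)\,\uno$ using the operator bounds, feed this into Theorem \ref{teo conmutadores largo}, and conclude via Remark \ref{rem idealsimetrico}. The extra details you supply (the min-max justification of the entry-wise spread bound and the compactness of $AX-XB$) are exactly the steps the paper leaves implicit.
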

\proof 
Notice that  since $a_j\, I \leq A_j\leq a_j'\, I $ and 
$b_j\, I \leq B_j\leq b_j'\, I $ for $j=1,2$, then 
$$ \sprm(A_j\oplus B_j)\leq \big(\,\max\{a_j',b_j' \}-\min\{a_j,b_j\}\,\big) \, \mathds{1}\, \text{ for } j=1,2\,.$$
From Theorem \ref{teo conmutadores largo} we now see that
\beq\label{eq part real e im coro}
s(AX-XB)\prec_w \,
\big(\,\max\{a_1',b_1' \}-\min\{a_1,b_1\}+\max\{a_2',b_2'\}-\min\{a_2,b_2\}\,\big)\, s(X)\,.
\eeq  Now the result follows from Eq. \eqref{eq part real e im coro} and Remark \ref{rem idealsimetrico}.
\qed

\begin{teo}\label{Teotomaestok} \rm
Let $A,\, X\in B(\H)^{sa}$ %\Ks{\cH}$ 
and  $U=e^{{\rm i}\,X}\in \cU(\H)$, where ${\rm i}=\sqrt{-1}$. Then \rm 
$$
s(A-U^*AU)\prec_w \frac{1}{2} \, \sprm(X\oplus X) \cdot \spr^+ (A\oplus A)\,.$$
\end{teo}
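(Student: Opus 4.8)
The plan is to realize $A-U^*AU$ as an integral average of unitary conjugates of the commutator ${\rm i}(AX-XA)$, and then to feed this into Theorem \ref{teo doblesprm oplus}.

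First I would set $U_t=e^{{\rm i}tX}\in\cU(\H)$ for $t\in[0,1]$, so that $U_0=I$ and $U_1=U$. Since $X$ is bounded, $t\mapsto U_t$ is continuously differentiable in the operator norm, with $\dot U_t={\rm i}XU_t={\rm i}U_tX$; hence $f(t):=U_t^*AU_t$ is of class $C^1$ in norm and a direct computation gives
$$
\dot f(t)=-{\rm i}\,U_t^*XAU_t+{\rm i}\,U_t^*AXU_t=U_t^*\,C\,U_t, \qquad\text{where } C:={\rm i}(AX-XA)\in B(\H)^{sa}.
$$
By the fundamental theorem of calculus in the Banach space $(B(\H),\|\cdot\|)$ this yields $A-U^*AU=-\int_0^1 U_t^*\,C\,U_t\,dt$.

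Next I would prove that $s(A-U^*AU)\prec_w s(C)=s(AX-XA)$. For each fixed $k\in\N$ the Ky Fan $k$-functional $T\mapsto\sum_{i=1}^k s_i(T)$ is a norm on $B(\H)$ (it is dominated by $k\,\|\cdot\|$) and is unitarily invariant by item 3 of Theorem \ref{teo rejunte1}; applying the triangle inequality for the Riemann integral of the norm-continuous integrand $t\mapsto U_t^*CU_t$ gives
$$
\sum_{i=1}^k s_i(A-U^*AU)\ \le\ \int_0^1\sum_{i=1}^k s_i(U_t^*CU_t)\,dt\ =\ \sum_{i=1}^k s_i(C),
$$
since $s_i(U_t^*CU_t)=s_i(C)$. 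As $k$ was arbitrary, $s(A-U^*AU)\prec_w s(C)$, and $s(C)=s(AX-XA)$ because $|{\rm i}(AX-XA)|=|AX-XA|$. (Equivalently one may discretize the integral, apply Weyl's inequality \eqref{eq desi Weyl} to the Riemann sums, and pass to the limit using that each $s_i$ is $1$-Lipschitz for the operator norm.)

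Finally, Eq. \eqref{eq teo prin conmut 1} of Theorem \ref{teo doblesprm oplus} gives $s(AX-XA)\prec_w\frac12\,\sprm(A\oplus A)\cdot\sprm(X\oplus X)$; combining this with the previous step through transitivity of $\prec_w$ and commutativity of the entrywise product of sequences yields $s(A-U^*AU)\prec_w\frac12\,\sprm(X\oplus X)\cdot\spr^+(A\oplus A)$, which is the claim. The one place that needs attention is the comparison step: one must check that $\sum_{i=1}^k s_i(\cdot)$ is finite and is genuinely a norm on all of $B(\H)$ (not only on a symmetrically normed ideal) and that the operator-valued integrand is regular enough — norm-continuity suffices here, because $X$ is bounded — for the integral form of the triangle inequality to apply. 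Everything else is routine.
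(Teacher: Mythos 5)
Your proof is correct, and it reaches the paper's conclusion by the same underlying reduction — everything hinges on the path $t\mapsto U_t^*AU_t$ and on feeding the commutator ${\rm i}(AX-XA)$ into Theorem \ref{teo doblesprm oplus} — but the passage from the infinitesimal statement to the global one is executed differently. The paper discretizes: it telescopes $A-U^*AU$ into $m$ increments $A(\tfrac jm)-A(\tfrac{j+1}m)$, applies Weyl's inequality \eqref{eq desi Weyl} to the sum, uses that each increment is unitarily conjugate to $A-A(\tfrac1m)$, and then invokes a first-order Taylor expansion $s(A-A(\tfrac 1m))=\tfrac1m\,s(AX-XA)+O(m)$ with $m\,O(m)\to0$, finally letting $m\to\infty$. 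You instead write $A-U^*AU=-\int_0^1U_t^*\,{\rm i}(AX-XA)\,U_t\,dt$ and apply the integral triangle inequality for the Ky Fan functionals $\sum_{i=1}^k s_i(\cdot)$, which are indeed norms on all of $B(\H)$ (subadditivity is exactly \eqref{eq desi Weyl}, positivity follows from $s_1(T)=\|T\|$, and continuity from the bound by $k\,\|\cdot\|$), together with unitary invariance of each $s_i$. This buys you two things: it bypasses the error-term bookkeeping entirely, and it isolates the clean intermediate inequality $s(A-U^*AU)\prec_w s(AX-XA)$, which the paper only obtains implicitly in the limit. Both arguments are sound; yours is arguably the tidier write-up of the same idea.
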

\proof  

\pausa Let $A(\cdot):[0,1]\rightarrow B(\cH)^{sa}$ be the smooth function given by 
$A(t)=e^{-{\rm i}\,tX}\,A\,e^{{\rm i}\,t X}$,  for $t\in [0,1]$. Notice that 
$A(0)=A$ and $A(1)=U^*A\,U$; using Weyl's inequality in Eq. \eqref{eq desi Weyl}, 
\beq \label{eq desi weyl en la curva conj unit}
\barr{rl}
s(A-U^*AU)\prec_w \suml_{j=0}^{m-1} s\big(\,A(\frac j m) - A(\frac {j+1} {m}) \,\big)
\peso{for every} m\in \N  \,.
\earr
\eeq 
Notice that $A(t+h)=e^{-{\rm i}\,t\,X}\,A(h)\,e^{{\rm i}\,t\,X}$ with $e^{{\rm i}\,t\,X}\in\cU(\H)$, for $t,\,h,\,t+h\in [0,1]$. Thus %, we see that 
\beq\label{con m}
\barr {rl} s(A(\frac j m) - A(\frac {j+1} {m}))& 
=s(A - A(\frac 1 m) )\peso{for}j\in \I_{m-1}  \ . \earr
\eeq
Since $A-A(0)=0$ and $\frac{d}{dt} A(t)|_{t=0}={\rm i}\,(AX-XA)$ we get that 
\beq\label{AX-XA}
\barr{rl}
s(A - A(\frac 1 m) )&
=\frac 1 m \, s(AX-XA)+O(m)\peso{with}\lim\limits_{m\rightarrow \infty} m\,O(m)=0\,.
\earr
\eeq
Hence, by Theorem \ref{teo doblesprm oplus}  we have that 
\beq\label{eq el teo 41 se aplica aca}
\barr{rl}
s(A - A(\frac 1 m) )&\prec_w \frac{1}{2\,m} \, \sprm(X\oplus X)\, 
\cdot \, \sprm(A\oplus A)+ O(m)\,. 
\earr
\eeq 
Therefore, by Eqs. \eqref{eq desi weyl en la curva conj unit} and \eqref{con m} 
we have that, for sufficiently large $m$,
$$
s(A-U^*AU)\prec_w \frac{1}{2}\,\sprm(X\oplus X)\, \cdot \, \sprm(A\oplus A)+  m\,O(m)\,.
$$
The statement now follows by taking the limit $m\rightarrow \infty$ in the expression above.
\qed

\section{AGM-type inequalities and spectral spread}\label{AG}
Recall the 
Arithmetic-Geometric mean (AGM) inequality  for compact operators: given 
$A,B\in K(\H)$,  Bhatia and Kittaneh showed in \cite{bha-Kitt90} that:
\beq\label{Ec Bhakitta 2}
2\,s_i(A\,B^*)\leq s_i(A^*A+B^*B)\ , \peso{ for } i\in \N\, .
\eeq 
Then,  given 
$S,C\in B(\H)$ such that $C^*C+S^*S\leq I$ and $E\in K(\H)^+$, we get:
\beq\label{Ec consec Bhakitta 2}
2\,s_i(S\,E\,C^*)\leq s_i(E)\ , \peso{ for } i\in \N\,,
\eeq 
by taking $A=SE^{1/2}$ and $E^{1/2}C^*=B^*$ in Eq. \eqref{Ec Bhakitta 2}.  
Corach, Porta and Recht,  motivated by their study of the geometry in the context of operator algebras,  obtained in \cite{CPR} the following inequality with respect to a unitarily invariant norm $N(\cdot)$ 
\beq\label{CPR}
N(T)\leq \frac{1}{2} N(STS^{-1}+S^{-1}TS)\,,
\eeq where $T\in K(\H)$ is a compact operator and $S\in B(\H)$ is self-adjoint and invertible bounded operator. Later on, Bhatia and Davis showed in \cite{BhaDav} the following AGM-type inequality with respect to a unitarily invariant norm \beq\label{Ec BhaDav}
N(A^*XB)\leq \frac{1}{2} N(AA^*X+XBB^*)\,,
\eeq where $X\in K(\H)$  is a compact operator and $A,\,B\in B(\H)$ are bounded operators.
It turns out that inequalities in Eqs. \eqref{CPR} and \eqref{Ec BhaDav} are equivalent (by simple substitutions).

 \pausa
These AGM-type inequalities (both for singular values and for unitarily invariant norms) have been studied and extended in different contexts (\cite{Alda12,Audeh20a,AuKitt}); it turns out that they are related with deep geometric properties of operators \cite{ACS97,CPR,CPR2,CPR3}.

\pausa
Assume that $C,\,S\in B(\H)$ 
and let $E\in K(\H)^+$. As in Eq. \eqref{Ec consec Bhakitta 2}, we have that 
\beq \label{eq aud20}
s_j(SEC^*)=s_j(\,(SE^{1/2})(CE^{1/2})^*)\leq 
\frac{1}{2}s_j\,\big(\,E^{1/2}(S^*S+C^*C)E^{1/2}\,\big)%\leq \frac{1}{2}s_j(E)
\, \peso{for} j\in \N \,.
\eeq 
Equations \eqref{Ec consec Bhakitta 2} and \eqref{eq aud20} were derived in \cite{Audeh20a}, where they were also shown to be stronger than 
the AGM-type inequalities obtained in \cite{Alda12} (see \cite{Hir05,Hir16} for related singular values inequalities). 
In particular, %(see Remark \ref{rem idealsimetrico}), 
if $C^*C+S^*S \leq I$, for any unitarily invariant norm $N$ we have that 
\beq \label{eq aud202}
N(SEC^*)\leq \frac{1}{2}N(E)\,.
\eeq From Eqs. \eqref{eq aud20} and \eqref{eq aud202} it is possible to derive more general AGM-type inequalities. Nevertheless, Eq. %\eqref{eq aud20} and 
\eqref{eq aud202} fails for arbitrary self-adjoint $E\in K(\H)^{sa}$: 

\begin{exa} As in previous examples, we shall use that a matrix $A\in \mathcal{M}_n(\C)$ can be embedded as a finite rank operator, which allow us to 
build counterexamples using matrices. 
Now we see that Eq. \eqref{eq aud202} is false if $E \not \ge 0$. 
Consider $N(X)=\|X\|_2=(\tr(X^*X))^{1/2}$ the Frobenius norm (which is unitarily invariant). 
Let 
$$S=\bm{cc}\sin(\pi/3)&0\\
		0&\sin(\pi/5)
	\em\,, \, \, E= \bm{cc}
0&1\\
1&0
\em \py C=\bm{cc}
\cos(\pi/3)&0\\
0&\cos(\pi/5)
\em\,.$$

\pausa
Then $C^*C+S^*S=I$ and $\|S\,E\,C^*\|_2\approx 0.7598 > \frac{\sqrt 2}2 
= \frac{1}{2}\,\|E\|_2\,$. 
\EOE
\end{exa}

\subsection{AGM-type inequalities: the general self-adjoint case}
\pausa
In what follows we obtain a generalization of Eq. \eqref{eq aud202} for arbitrary self-adjoint $E\in B(\H)^{sa}$; these are new inequalities that involve upper bounds in terms of the spectral spread of self-adjoint operators in $B(\H)$. We point out that our results are based on the (weaker) submajorization relations; 
 nevertheless, these results imply inequalities with respect to unitarily invariant norms as in Eq. \eqref{eq aud202}.

\begin{pro}\label{pro para gen AM}
Let $C,\,S\in B(\H)$  be such that $C^*C+S^*S =P =P^2$ %, where $P=P^2$, 
and let $E\in B(\H)^{sa}$.
Then
\beq\label{eq con spread1 tuti} 
2\,s(SEC^*)\prec_w \sprm(PEP \oplus 0_\H )\ .
\eeq
Note that if $\dim \ker P = \infty$, we can rewrite $2\,s(SEC^*)\prec_w \sprm(PEP)$. 
\end{pro}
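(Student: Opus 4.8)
The plan is to reduce the claim to Theorem \ref{teo valecon2op} (the key Tao-type inequality $2\,s(B)\prec_w\sprm(A)$ for an anti-diagonal block $B$ of a self-adjoint operator $A$) by building an appropriate self-adjoint operator whose off-diagonal block is (unitarily equivalent to) $SEC^*$ and whose diagonal blocks are compressions of $E$. First I would observe that the hypothesis $C^*C+S^*S=P=P^2$ says that the operator $W=\bigl[\begin{smallmatrix} S\\ C\end{smallmatrix}\bigr]\colon R(P)\to\cH\oplus\cH$ is a partial isometry (its initial space is $R(P)$), since $W^*W=S^*S+C^*C=P$. Hence the $2\times 2$ operator matrix
$$
V=\bm{cc} S & C \\ X & Y \em\barr{c}\,\\ \,\em
$$
can be completed to a unitary (or at least an isometry) on a suitable space; more to the point, I would work directly with the self-adjoint operator $A = W\,(PEP)\,W^* \in B(\cH\oplus\cH)^{sa}$ acting on $R(P)\subseteq\cH$. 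Writing this out in the $2\times 2$ block form over $\cH\oplus\cH$ gives
$$
A=\bm{cc} S(PEP)S^* & S(PEP)C^* \\ C(PEP)S^* & C(PEP)C^* \em\, ,
$$
whose anti-diagonal block is $B:=S(PEP)C^* = S\,P\,E\,P\,C^* = SEC^*$, using $SP=S$ and $PC^*=C^*$ (which follow from $S=SP$, i.e.\ $R(S^*)\subseteq R(P)$, a consequence of $S^*S\le P$, and similarly for $C$).

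The next step is to apply Theorem \ref{teo valecon2op} to this $A$, which yields $2\,s(SEC^*) = 2\,s(B)\prec_w \sprm(A)$. It then remains to show $\sprm(A)\prec_w \sprm(PEP\oplus 0_\cH)$ — in fact I expect $\sprm(A)= \sprm(PEP\oplus 0_\cH)$, or at least an entrywise inequality $\spr^+_i(A)\le \spr^+_i(PEP\oplus 0_\cH)$. The point is that $A=W(PEP)W^*$ with $W$ a partial isometry with initial space $R(P)$, so $A$ is unitarily equivalent (via the unitary from $R(W)$ to $R(P)$) to $(PEP)|_{R(P)}$ acting on $R(W)$, padded by zeros on $R(W)^\perp\subseteq\cH\oplus\cH$. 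Since $\dim(R(W)^\perp)$ in $\cH\oplus\cH$ is infinite (because $R(W)\subseteq\cH\oplus 0$ has codimension at least $\dim\cH=\infty$), Remark \ref{caso la} / Proposition \ref{pro rejunte spread} item 4 gives that the spectral scale of $A$ equals that of $(PEP\oplus 0_\cH)$ when $PEP$ is compact, and in general $C(A)=\{0\}$ forces $\la(A)=\la\bigl((PEP)|_{R(P)}\oplus 0\bigr)$; here I would need to check $(PEP)|_{R(P)}\oplus 0$ (over $\cH\oplus\cH$) has the same spectral scale as $PEP\oplus 0_\cH$ (over $\cH$), which is immediate since $PEP$ already acts as $0$ on $\ker P$, so the two differ only by an extra infinite-dimensional zero summand, absorbed by Eq. \eqref{sumando B}. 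Combining, $\sprm(A)=\sprm(PEP\oplus 0_\cH)$, completing the proof; the final remark about rewriting $2\,s(SEC^*)\prec_w\sprm(PEP)$ when $\dim\ker P=\infty$ is then just the observation that in that case $PEP$ already has an infinite-dimensional kernel, so $PEP\oplus 0_\cH$ and $PEP$ have the same spectral scale by \eqref{sumando B} again.

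The main obstacle I anticipate is the careful bookkeeping of the identities $SP=S$, $PC^*=C^*$ and the verification that $A=W(PEP)W^*$ is genuinely unitarily equivalent to a zero-padding of $(PEP)|_{R(P)}$ — i.e.\ that $W$ restricted to $R(P)$ is an isometry onto $R(W)$ and $W^*$ inverts it there. This is where one uses $W^*W=P$ most essentially: it gives $\|Wx\|^2=\langle Px,x\rangle=\|x\|^2$ for $x\in R(P)$, so $W|_{R(P)}$ is isometric, and $WW^*$ is the orthogonal projection onto $R(W)$. Everything else (the submajorization steps, the passage to unitarily invariant norms which is not even needed here) is routine given the earlier results. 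I would also double-check the edge case where $P$ has finite rank, but then $PEP$ is finite rank, $\sprm(PEP\oplus 0_\cH)$ is computed via Remark \ref{rem en dim finit}, and the same zero-padding argument applies verbatim.
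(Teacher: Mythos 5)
Your strategy is the right one and is essentially the paper's: realize $SEC^*$ as the anti-diagonal block of a self-adjoint dilation and apply Theorem \ref{teo valecon2op}. Your implementation, via the column operator $W=\begin{pmatrix} S\\ C\end{pmatrix}$ with $W^*W=P$ and $A=W(PEP)W^*$, is correct up to and including the application of Theorem \ref{teo valecon2op} (the identities $S=SP$ and $C^*=PC^*$ do follow from $S^*S\le P$, $C^*C\le P$ via Douglas' theorem), and it has the advantage of not needing the polar-decomposition reduction that the paper uses to build its $3\times 3$ unitary on $\cH\oplus R(P)$.

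The gap is in the last step. The assertion that $\dim R(W)^\perp=\infty$ ``because $R(W)\subseteq \cH\oplus 0$'' is false: $R(W)=\{(Sx,Cx):x\in\cH\}$ is not contained in $\cH\oplus 0$ unless $C=0$, and $R(W)^\perp$ can even be trivial. For instance, take $P=I$ and $S^*,C^*$ isometries of $\cH$ onto complementary infinite-dimensional subspaces; then $S^*S+C^*C=I$, $W$ is a unitary from $\cH$ onto $\cH\oplus\cH$, and for $E=I$ one gets $A=I_{\cH\oplus\cH}$, so $\sprm(A)=0$ while $\sprm(PEP\oplus 0_\cH)=\uno$. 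Hence the claimed equality $\sprm(A)=\sprm(PEP\oplus 0_\cH)$ fails in general (and the intermediate claim that ``$C(A)=\trivial$'' forces the identification only makes sense for compact operators, whereas $E$ is merely bounded here). The inequality you actually need, $\sprm(A)\prec_w\sprm(PEP\oplus 0_\cH)$, is nevertheless true and easy to recover: $A$ is unitarily equivalent to $E_P\oplus 0_{R(W)^\perp}$, acting on a space whose dimension equals $\dim(\cH\oplus\cH)$, and this operator is unitarily equivalent to the compression of $PEP\oplus 0_\cH\cong E_P\oplus 0_{\ker P\oplus\cH}$ to an infinite-dimensional subspace containing $R(P)$; the interlacing inequalities \eqref{PAP} then give $\la_j(A)\le\la_j(PEP\oplus 0_\cH)$ and $\la_{-j}(A)\ge\la_{-j}(PEP\oplus 0_\cH)$, so $\sprm_j(A)\le\sprm_j(PEP\oplus 0_\cH)$ for every $j$. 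With this repair your argument is complete; the paper sidesteps the issue by conjugating $PEP\oplus 0_{R(P)}$ with a genuine unitary, so that the spectral spread is preserved exactly.
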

\proof
By considering the polar decomposition of $S$ and $C$ and 
Theorem \ref{teo rejunte1}, %Eq. \eqref {sing norm}, 
we can assume that $S,\,C\in B(\H)^+$. 
Let $\cK = R(P)$ and consider the Hilbert space $\cH\oplus \cK$.
We consider an orthogonal decomposition $\cH\oplus \cK=\cK\oplus \cK^\perp\oplus \cK$; this
decomposition allows us to represent operators $T\in B(\cH\oplus \cK)$ as $3\times 3$ 
block matrices, in the usual sense. 
Since the compressions $C_P^2+S_P^2=I_P\in B(\cK)$ then the operator $U\in B(\cH\oplus \cK)$ whose block 
representation is given by 
$$%\beq\label{eq bloques U}
\bm{ccc}
C_P & 0 & -S_{P} \\
0 & I  & 0 \\
S_P & 0 & C_P 
\em
$$ is unitary. Furthermore, it is straightforward to check that the block 
representations  of $PEP\oplus 0_\cK,\,U (PEP\oplus 0_\cK)U^*\in B(\cH\oplus \cK)^{sa}$ are given by
$$
PEP\oplus 0_\cK= \bm{ccc}
E_P & 0 & 0 \\
0 & 0 & 0 \\
0 & 0 & 0 
\em
\py 
U (PEP\oplus 0_\cK)U^*=
\bm{cc|c}
C_PE_PC_P & 0 & C_PE_PS_P \\
0 & 0 & 0 \\
\hline
S_PE_PC_P & 0 & S_PE_PS_P 
\em
\,.
$$ 
where $E_P\in B(\cK)$ is the compression of $E$ to the subspace $\cK$. Since $C=CP = PCP$ and $S=SP= PSP$, we get that $S_PE_PC_P=(SEC)_P$ and hence, the anti-diagonal block in the block representation above 
$$[S_PE_PC_P\ \ \ 0]= [(SEC)_P \ \ \ 0]=  SEC\in B(\cH,\cK)\,.$$
Notice that in the equality above, we need to restrict the co-domain of $SEC\in B(\cH)$; yet, this restriction
does not affect the generalized singular values (see Remark \ref{sing de transf}). Hence, as a consequence of Theorem \ref{teo valecon2op} we get that 
\beq \nonumber
2\,s(SEC)\prec_w \sprm(U (PEP\oplus 0_\cK)U^*)=\sprm(PEP\oplus 0_\cK)\,.
\eeq 
Finally, notice that we always get that $\sprm(PEP\oplus 0_\cK)=\sprm(PEP\oplus 0_\cH)$. Indeed, if $\dim\cK=\dim\cH$ this is clear; in case $\dim\cK<\infty$ then $PEP\in K(\cH)^{sa}$ and therefore, $\la(PEP)=\la(PEP\oplus 0_\cK)=\la(PEP\oplus 0_\cH)$ which proves the identity between spectral spreads above. \qed

\begin{rem}\label{sin sumar 0} \rm 

\pausa
The formulation of Proposition \ref{pro para gen AM}
is sharp in this general case
(see Remark \ref{rem mas cero}  below). 
Nevertheless, if $E\in K(\H)^{sa}$ 
we get a better 
estimate 
(see Corollary \ref{pro para gen AM comp}). 
\EOE
\end{rem}

\pausa
Proposition \ref{teo relaciones AG con spred4} and Corollary \ref{coro AG} below complement Proposition \ref{pro para gen AM}. 

\begin{pro}\label{teo relaciones AG con spred4}
Let $C,\,S\in B(\H)^+$ be such that $C^2+S^2=P = P^2$. %, with $P=P^2$ and  $\dim \H = \infty$. 
If $E_1,\,E_2\in B(\H)^{sa}$ then
\beq\label{eq nueva para comp1}
s(SE_1C+CE_2S)\prec_w \frac 1 2 \ \sprm(PE_1P\oplus -PE_2P)\,.
\eeq
\end{pro}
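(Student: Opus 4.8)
The plan is to follow the template of the proof of Proposition~\ref{pro para gen AM}: I will realize $SE_1C+CE_2S$ as the anti-diagonal corner of a self-adjoint operator that is unitarily equivalent to $PE_1P\oplus(-PE_2P)$, and then invoke the key inequality of Theorem~\ref{teo valecon2op}.

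The first step is the algebraic observation that, under the hypotheses, $C$ and $S$ commute. Indeed $C^2=P-S^2$, so $C^2$ and $S^2$ commute, and since $C=(C^2)\rai$ and $S=(S^2)\rai$ both lie in the abelian von Neumann algebra generated by $C^2$, we get $CS=SC$. Moreover $C^2\leq P$ and $S^2\leq P$ force $\ker P\inc \ker C\cap \ker S$, hence $C=PCP$ and $S=PSP$. Set $\cK=R(P)$, and let $C_P,S_P\in B(\cK)^+$ and $E_{j,P}\in B(\cK)^{sa}$ denote the compressions of $C,S$ and $E_j$ to $\cK$. Then $C_P^2+S_P^2=I_\cK$ and $C_PS_P=S_PC_P$, so
$$
W\igdef \bm{cc} S_P & -C_P\\ C_P & S_P\em \in B(\cK\oplus\cK)
$$
is unitary.

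A routine computation shows that the $(1,2)$ block (the anti-diagonal corner) of $W\,\big(E_{1,P}\oplus(-E_{2,P})\big)\,W^*\in B(\cK\oplus\cK)^{sa}$ is $S_PE_{1,P}C_P+C_PE_{2,P}S_P$, which, since $C=PCP$ and $S=PSP$, is exactly $SE_1C+CE_2S$ regarded as an operator on $\cK$. Now put
$$
\tilde A\igdef\big(\,W\,(E_{1,P}\oplus(-E_{2,P}))\,W^*\,\big)\oplus 0_{\ker P\oplus\ker P}\in B(\cH\oplus\cH)^{sa}\,.
$$
With respect to the decomposition $\cH\oplus\cH$, the operator $\tilde A$ is self-adjoint and its anti-diagonal block is $SE_1C+CE_2S\in B(\cH)$ (padded with $0$ on $\ker P$), so, since restricting the codomain does not affect generalized singular values (Remark~\ref{sing de transf}), Theorem~\ref{teo valecon2op} gives
$$
2\,s(SE_1C+CE_2S)\prec_w \sprm(\tilde A)\,.
$$

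To finish, note that $\tilde A$ is unitarily equivalent (via $W\oplus I_{\ker P\oplus\ker P}$) to $\big(E_{1,P}\oplus(-E_{2,P})\big)\oplus 0_{\ker P\oplus\ker P}$, and after reorganizing the four direct summands this last operator is precisely $PE_1P\oplus(-PE_2P)\in B(\cH\oplus\cH)^{sa}$; hence $\sprm(\tilde A)=\sprm(PE_1P\oplus -PE_2P)$, and dividing the previous submajorization by $2$ is exactly Eq.~\eqref{eq nueva para comp1}. I expect the main point requiring care to be this last bookkeeping: one has to check that adjoining the zero summand on $\ker P\oplus\ker P$ reconstructs $PE_1P\oplus(-PE_2P)$ on the nose (recall that in general $\sprm(A\oplus 0)\neq\sprm(A)$, so the zeros do matter), and that the anti-diagonal corner read in $B(\cH)$ has the same $s$-numbers as the corner read in $B(\cK)$. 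The other ingredient one should not take for granted is the commutativity $CS=SC$, which is what makes $W$ a genuine unitary.
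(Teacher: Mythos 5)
Your proof is correct and follows essentially the same route as the paper's: conjugate $PE_1P\oplus(-PE_2P)$ by a rotation-type unitary built from the commuting compressions $C_P,S_P$ so that $SE_1C+CE_2S$ appears as the anti-diagonal corner, then apply Theorem \ref{teo valecon2op}. The only differences are cosmetic (you organize $\cH\oplus\cH$ as $(\cK\oplus\cK)\oplus(\ker P\oplus\ker P)$ rather than the paper's $\cK\oplus\cK^\perp\oplus\cK\oplus\cK^\perp$), and your explicit verification that $CS=SC$ --- which is genuinely needed for the unitarity of the $2\times 2$ rotation block --- is a point the paper's proof leaves implicit.
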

\proof
We consider the auxiliary Hilbert space
$\cH\oplus \cH$ together with its orthogonal decomposition  
$$\cH\oplus \cH=\cK\oplus \cK^\perp \oplus \cK\oplus \cK^\perp\,,$$
where $\cK=R(P)$. We also consider $U\in \cU(\cH\oplus \cH)$, whose
block representation is given by 
$$
\bm{cccc}
C_P & 0 & -S_{P}&0 \\
0 & I  & 0&0 \\
S_P & 0 & C_P&0\\
0& 0& 0& I
\em
$$
 It is straightforward to check that 
the block representations of $PE_1P\oplus -PE_2P\in B(\cH\oplus \cH)^{sa}$ and $U(PE_1P\oplus -PE_2 P)U^*$ are given by
$$
\bm{cccc}
(E_1)_P & 0 & 0& 0 \\
0 & 0 & 0 & 0\\
0 & 0 & -(E_2)_P & 0\\
0 & 0 & 0& 0 \\
\em
\py 
\bm{cc|cc}
(C E_1 C-SE_2S)_P & 0 & (CE_1S+SE_2C)_P & 0\\
0 & 0 & 0 & 0\\
\hline
(SE_1C+CE_2S)_P & 0 & (SE_1S-CE_2C)_P & 0\\
0 & 0 & 0 & 0
\em
\,.
$$ 
Arguing as in the proof of Proposition \ref{pro para gen AM}
we see that
$$
s(\bm{cc} (SE_1C+CE_2S)_P & 0 \\
0 & 0 \em)=s(SE_1C+CE_2S)\,.
$$
Hence, as a consequence of Theorem \ref{teo valecon2op} we get that 
\beq \nonumber
2\,s(SE_1C+CE_2S)\prec_w \sprm(U(PE_1P\oplus -PE_2P)U^*)=\sprm(PE_1P\oplus- PE_2P)\,.\QEDP\eeq

\pausa It is easy to see that, if $E\in B(\H)^{sa}$ then 
\beq\label{eq nueva para comp2}
\sprm(E\oplus - E) = \big(\, 2\, |\la (E)|\,\big) \da = 2 \ s(E) \ . 
\eeq
We use this in the following inequality, valid for a general $E\in B(\H)^{sa}$:

\begin{cor}\label{coro AG}
Let 
$C,\,S\in  B(\H)^+$ be such that $C^2+S^2=P = P^2$. %, with $P=P^2$. % and  $\dim \H = \infty$. 
If $E\in B(\H)^{sa}$ then
$$
s(\,\Preal(SEC)\,)\prec_w 
\frac 1 2 \ s(E)\,.
$$
\end{cor}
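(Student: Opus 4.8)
The plan is to derive Corollary \ref{coro AG} directly from Proposition \ref{teo relaciones AG con spred4} by choosing $E_1 = E_2 = E$. With that choice, the left-hand side of Eq. \eqref{eq nueva para comp1} becomes $s(SEC + CES) = s\bigl(2\,\Preal(SEC)\bigr) = 2\,s(\Preal(SEC))$, since $(SEC)^* = CE S$ (because $C, S, E$ are self-adjoint) and hence $SEC + CES = SEC + (SEC)^* = 2\,\Preal(SEC)$. On the right-hand side, the spectral spread becomes $\tfrac12\,\sprm(PEP \oplus -PEP)$, and by Eq. \eqref{eq nueva para comp2} applied to the self-adjoint operator $PEP$ we have $\sprm(PEP \oplus -PEP) = 2\,s(PEP)$.

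Combining these two observations, Proposition \ref{teo relaciones AG con spred4} yields
\[
2\,s(\Preal(SEC)) \prec_w \tfrac12 \cdot 2\, s(PEP) = s(PEP)\,.
\]
It then remains only to bound $s(PEP)$ by $s(E)$. For this I would invoke item 3 of Theorem \ref{teo rejunte1}: since $P$ is an orthogonal projection with $\|P\| = 1$, we get $s_i(PEP) \le \|P\|\,\|P\|\,s_i(E) = s_i(E)$ for every $i$, hence $s(PEP) \prec_w s(E)$. Chaining the two submajorization relations (submajorization is transitive) gives $2\,s(\Preal(SEC)) \prec_w s(E)$, which after dividing by $2$ is exactly the claimed inequality $s(\Preal(SEC)) \prec_w \tfrac12\,s(E)$.

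There is essentially no obstacle here; the only points requiring a moment of care are the algebraic identity $SEC + CES = 2\,\Preal(SEC)$ (which uses self-adjointness of all three factors to identify $CES = (SEC)^*$) and the observation that $\sprm(\cdot)$ is insensitive to scaling so that the factors of $2$ line up correctly. One could also note that the hypothesis $C, S \in B(\H)^+$ is inherited directly from Proposition \ref{teo relaciones AG con spred4}, so no extra reduction via polar decomposition is needed at this stage.
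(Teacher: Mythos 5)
Your proof is correct and follows essentially the same route as the paper: apply Proposition \ref{teo relaciones AG con spred4} with $E_1=E_2=E$, identify $SEC+CES=2\,\Preal(SEC)$, use Eq. \eqref{eq nueva para comp2} to get $\sprm(PEP\oplus -PEP)=2\,s(PEP)$, and finish with $s(PEP)\prec_w s(E)$. Your write-up is in fact slightly more explicit than the paper's, which leaves the final step $s(PEP)\prec_w s(E)$ (item 3 of Theorem \ref{teo rejunte1}) unjustified.
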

\proof
Notice that $\Preal(SEC)=\frac{SEC+CES}{2}$. 
By Proposition \ref{teo relaciones AG con spred4} 
with $E_1=E_2=E$, we get 
\beq
s(\,\Preal(SEC)\,)\prec_w \frac 1 4 \ \sprm(PEP\oplus - PEP) \stackrel{\eqref{eq nueva para comp2}}= 
\frac 1 2 \ s(PEP) \prec_w \frac12\ s(E) \ . \QEDP
\eeq

\subsection{AGM-type inequalities: the compact self-adjoint case}

\pausa
We begin by 
reformulating  some facts about spectral scales, submajorization and spectral spread for compact self-adjoint operators. 
 Recall the definition of spectral scale of self-adjoint operators given in Definition \ref{defi espec scale}. 
Also recall from Remark \ref {caso K} that 
if $A\in K(\H)^{sa}$ is compact,   then the entries of the 
sequence $\la(A)=(\la_i(A))_{i\in\Z_0}$ %of Definition \ref{defi espec scale} 
are also eigenvalues of $A$ (or zero), in such a way that the numbers $\la_i(A)$, for $i \in \N$, are the positive eigenvalues of $A$ counting multiplicities (or zero) arranged in 
non-increasing order. Similarly the numbers 
$\la_{-i}(A)$, for $i \in \N$, are the negative eigenvalues of $A$ counting multiplicities (or zero) arranged in 
non-decreasing order.

\pausa
Now we show some properties of the spectral spread in the compact case: 

\begin{pro}\label{sprm comp}
Let $A\in K(\cH)^{sa}$. Then 
\ben
\item 
If $P\in P_k(\cH)$ ($1\leq k\leq \infty$) then 
\beq\label{PAPK}
\la_j(A)\geq \la_j(PAP) \py \la_{-j}(A)\leq \la_{-j}(PAP)\peso{for} j\in\I_k\,.
\eeq
(Notice that this is a reformulation of Eq. \eqref{PAP} for the compact case).
\item 
For every $i \in \N $,
\beq\label{spr con P}
\sprm_i (PAP)\le \sprm_i(A) \implies 
\sprm(PAP)\prec_w \sprm(A) \ .
\eeq
\een 
\end{pro}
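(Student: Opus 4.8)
The plan is to treat the two items separately, with item 1 feeding directly into item 2. For item 1, I would simply invoke the interlacing inequalities of Theorem \ref{teo colecta} (Eq. \eqref{PAP}) applied to the compact self-adjoint operator $A$ and the projection $P\in P_k(\cH)$. Indeed, $A_P = PA|_{R(P)}$ has the same non-zero eigenvalues as $PAP$ viewed as an operator on $\cH$ (the extra zeros coming from $\ker P$ only pad the spectrum), so in the compact case $\la_j(PAP) = \la_j(A_P)$ and $\la_{-j}(PAP) = \la_{-j}(A_P)$ for $j\in\I_k$ once one reconciles the finite-dimensional convention of Remark \ref{rem en dim finit} with the infinite-dimensional spectral scale via Remark \ref{caso K}. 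Hence \eqref{PAPK} is just the restatement of \eqref{PAP} promised in the parenthetical remark; the only thing to be careful about is the indexing when $k<\infty$, where one uses Eq. \eqref{dim fin}.

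For item 2, the entry-wise inequality $\sprm_i(PAP)\le\sprm_i(A)$ is immediate from item 1: for each $i\in\N$ (with $i\le k$ if $k<\infty$, and for $i>k$ both sides involving $PAP$ are controlled by zero eigenvalues) we have
\[
\sprm_i(PAP) = \la_i(PAP) - \la_{-i}(PAP) \le \la_i(A) - \la_{-i}(A) = \sprm_i(A),
\]
using $\la_i(PAP)\le\la_i(A)$ and $\la_{-i}(PAP)\ge\la_{-i}(A)$. Since both $\sprm(PAP)$ and $\sprm(A)$ are non-negative and non-increasing sequences indexed by $\N$ (by Proposition \ref{pro rejunte spread}, item 1, together with Proposition \ref{pro rejunte spread}, item 4, in the compact case), an entry-wise inequality between such sequences gives the partial-sum inequality for every $k\in\N$, which is exactly the submajorization $\sprm(PAP)\prec_w\sprm(A)$. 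This is the content of the implication in \eqref{spr con P}.

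I do not expect any real obstacle here; the statement is essentially bookkeeping built on the interlacing inequalities already recorded in Theorem \ref{teo colecta}. The one point that needs a line of care is the case $k<\infty$: then $PAP$ is a finite-rank (hence compact) self-adjoint operator, and one must make sure that padding the eigenvalue list of $A_P$ with zeros to form $\la(PAP)$ on $\cH$ is consistent with the inequalities in \eqref{PAP} for indices $j>k$ — but for such $j$ one simply has $\la_j(PAP)=\la_{-j}(PAP)=0$ lying in $[\la_{-j}(A),\la_j(A)]$ since $0\in C(A)=\trivial$ forces nothing problematic (indeed $\la_{-j}(A)\le 0\le\la_j(A)$ always), so $\sprm_j(PAP)=0\le\sprm_j(A)$ trivially. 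With that remark the proof is complete.
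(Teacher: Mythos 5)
Your argument follows the same route as the paper: write $PAP=A_P\oplus 0_{\ker P}$, relate $\la(PAP)$ to $\la(A_P)$, invoke the interlacing inequalities \eqref{PAP}, and then pass from the entry-wise bound $\sprm_i(PAP)\le\sprm_i(A)$ to submajorization (trivially, since entry-wise domination of non-negative sequences already gives all partial-sum inequalities). One step is stated imprecisely, though: the identity $\la_j(PAP)=\la_j(A_P)$ for $j\in\I_k$ is only correct when $k=\infty$; when $k<\infty$ the finite-dimensional convention of Remark \ref{rem en dim finit} gives $\la_j(PAP)=\max\{\la_j(A_P),0\}$ and $\la_{-j}(PAP)=\min\{\la_{-j}(A_P),0\}$ (take $A_P=-I_k$ to see the equality fail), so to conclude $\la_j(A)\ge\la_j(PAP)$ you need, in addition to interlacing, the fact that $\la_j(A)\ge 0\ge\la_{-j}(A)$ for compact $A$ --- which you do use later for the indices $j>k$ but not here. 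With that one-line correction (which is exactly the point the paper makes explicit) your proof is complete.
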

\proof
Consider $A_P \igdef PA|_{R(P)} \in K(R(P)\,)^{sa}$. Then $PAP = A_P \oplus 0_{\ker P}\,$.
In case that  $k = \dim R(P) = \infty$, we can apply Remark \ref{caso la}, so that for every $j \in \N\,$ 
$$
\la_j(PAP) = \la_j(A_P) %\la_j(PA|_{R(P)}) 
\ge 0 \py \la_{-j}(PAP)=  \la_{-j}(A_P)  \le 0 \ .
$$
If $k<\infty$ then by Remark \ref{rem en dim finit}, 
$\la_j(PAP) = \max\{\la_j(A_P)\coma 0\}$ and 
$\la_{-j}(PAP)=  \min\{\la_{-j}(A_P)\coma 0\}$ for $j\in\I_k\,$. 
Hence Eq. \eqref{PAPK} follows in both cases from
the interlacing inequalities of Eq. \eqref{PAP}. 

\pausa
In particular, we now see that $\sprm_j(PAP)\leq \sprm_j(A)$, for $j\in\I_k\,$. 
If $k<\infty$, for every $j>k$ we have that $\la_j(PAP)=\la_{-j}(PAP)=0$ and then  $\sprm_j(PAP)=0\leq \sprm_j(A)$. The submajorization relation $\sprm(PAP)\prec_w \sprm(A)$ now follows directly from these facts. 
\QED

\pausa
Now we can reformulate Proposition \ref{pro para gen AM} for the compact case: 

\begin{cor}\label{pro para gen AM comp}
Let $C,\,S\in B(\H)$  be such that $C^*C+S^*S =P$, where $P=P^2$. % and $\dim \H = \infty$. 
If $E\in K(\H)^{sa}$ then
\beq\label{eq con spread1} 
2\,s(SEC^*)\prec_w \sprm(E)\,.
\eeq
\end{cor}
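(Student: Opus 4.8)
The plan is to combine Proposition \ref{pro para gen AM} with the compact-case comparison between $\sprm(PEP\oplus 0_\cH)$ and $\sprm(E)$ established in Proposition \ref{sprm comp}. By Proposition \ref{pro para gen AM} we already have, for any $E\in B(\H)^{sa}$ with $C^*C+S^*S=P=P^2$,
\beq\label{plan eq1}
2\,s(SEC^*)\prec_w \sprm(PEP\oplus 0_\cH)\,.
\eeq
So the only thing that remains to prove the corollary is that, when $E\in K(\H)^{sa}$,
\beq\label{plan eq2}
\sprm(PEP\oplus 0_\cH)\prec_w \sprm(E)\,,
\eeq
and then conclude by transitivity of $\prec_w$.

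First I would reduce \eqref{plan eq2} to a statement about a single compact self-adjoint operator. Write $PEP = E_P\oplus 0_{\ker P}$ where $E_P = PE|_{R(P)}\in K(R(P))^{sa}$ (as in the proof of Proposition \ref{sprm comp}), so that $PEP\oplus 0_\cH$ is unitarily equivalent to $E_P$ padded with a (possibly infinite dimensional) zero block. Since $E\in K(\H)^{sa}$, Remark \ref{caso K} / Remark \ref{caso la} give that appending zeros does not change the spectral scale in the relevant way; more precisely, by item 4 of Proposition \ref{pro rejunte spread}, $\sprm$ is unaffected by direct summing with $0_\cH$ in the compact case, so $\sprm(PEP\oplus 0_\cH)=\sprm(PEP)$. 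Thus \eqref{plan eq2} becomes $\sprm(PEP)\prec_w \sprm(E)$, which is exactly Eq. \eqref{spr con P} in Proposition \ref{sprm comp} — indeed that proposition is stated precisely for $A\in K(\H)^{sa}$ and an orthogonal projection $P$, and here $P=C^*C+S^*S$ is by hypothesis such a projection. The key input is the interlacing inequalities \eqref{PAPK}, which force $\sprm_i(PEP)\le\sprm_i(E)$ entrywise and hence the submajorization.

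Then the proof assembles in one line: by Proposition \ref{pro para gen AM}, the remark just made, and Proposition \ref{sprm comp},
\beq\label{plan eq3}
2\,s(SEC^*)\prec_w \sprm(PEP\oplus 0_\cH)=\sprm(PEP)\prec_w\sprm(E)\,,
\eeq
using transitivity of $\prec_w$ to obtain \eqref{eq con spread1}.

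I do not expect a serious obstacle here, since all the hard analytic work is already encapsulated in Proposition \ref{pro para gen AM} (built on the key inequality Theorem \ref{teo valecon2op}) and in the interlacing argument of Proposition \ref{sprm comp}. The one point requiring a little care — and the place where compactness of $E$ is genuinely used — is the identity $\sprm(PEP\oplus 0_\cH)=\sprm(PEP)$: for a general self-adjoint $E$ this can fail (as the example $\sprm(I_\H)=0$ versus $\sprm(I_\H\oplus 0_\H)=\uno$ after Proposition \ref{pro rejunte spread} shows), so one must invoke $E\in K(\H)^{sa}$ precisely to kill the contribution of the kernel of $P$ to the essential spectrum. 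Everything else is bookkeeping about direct sums and the transitivity of submajorization.
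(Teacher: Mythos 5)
Your proposal is correct and follows essentially the same route as the paper: apply Proposition \ref{pro para gen AM}, use item 4 of Proposition \ref{pro rejunte spread} (compactness of $PEP$) to identify $\sprm(PEP\oplus 0_\H)$ with $\sprm(PEP)$, and finish with Eq. \eqref{spr con P} from Proposition \ref{sprm comp} and transitivity of $\prec_w$. You also correctly pinpoint the identity $\sprm(PEP\oplus 0_\H)=\sprm(PEP)$ as the one step where compactness of $E$ is genuinely needed.
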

\proof 
By Proposition \ref{pro para gen AM} and  
item 4. of Proposition \ref{pro rejunte spread}, since $PEP\in K(\H)^{sa}$, 
$$
2\,s(SEC^*)\prec_w \sprm(PEP\oplus 0_\H) = \sprm(PEP)\ .
$$
Also, by Eq. \eqref{spr con P},
$\sprm(PEP)\prec_w \sprm(E)$. This shows Eq. \eqref{eq con spread1}. 
\QED

\begin{rem}\label{rem mas cero}
The statement of Corollary \ref{pro para gen AM comp} can fail
in the non compact case, 
 since $\spr^+(PEP)\prec_w \spr^+(E)$ does not hold in general  (take $P\neq I$ and $E=I$).
For example, if  $E =  I$ we have that $\sprm(E) = 0$. But taking $C = S = \frac1{\sqrt{2}} \, I$  we get  $s(SEC^*) = \frac 12 \uno$. 
So that Eq. \eqref{eq con spread1} fails in this case.
 Notice that even in this extreme case, 
Eq. \eqref{eq con spread1 tuti} in Proposition \ref{pro para gen AM} is still true, 
because $\sprm (I\oplus 0_\H) = \uno$.

\pausa	
On the other hand, with the notation of Corollary \ref{pro para gen AM comp}, if $E\in K(\H)^+$ then
$\sprm(E) \stackrel{\eqref{spr vs s pos}}=s(E)$.
Hence, Corollary \ref{pro para gen AM comp} implies that $N(SEC)\leq \frac 1 2 \,N(E)$ and we recover Eq. \eqref{eq aud202}. 

\pausa 
In case $E\in K(\H)^{sa}\setminus K(\H)^+$ then it turns out that 
$s(E)\prec\sprm(E)$, with strict majorization i.e., if $N$ is a strictly convex unitarily invariant norm then $$N(E)<g_N(\sprm(E))\,.$$ 
For example, if we consider $N(X)=\|X\|_2=(\tr(X^*X))^{1/2}$ then we have that 
$$\|E\|_2^2=\sum_{j\in\Z_0}\la_j(E)^2 < \sum_{j\in\N}(\la_j(E)-\la_{-j}(E))^2 
=	g_{||\cdot||_2}(\sprm(E))^2\,,$$
for every Hilbert-Schmidt self-adjoint operator $E\notin K(\cH)^+$.
\EOE
\end{rem}

\begin{cor}
Let $C,\,S\in B(\H)^+$ be such that $C^2+S^2=P = P^2$. 
If $E_1,\,E_2\in K(\H)^{sa}$,  then
\beq\label{eq nueva para comp3}
s(SE_1C+CE_2S)\prec_w \frac 1 2 \ \sprm(E_1\oplus -E_2)\,.
\eeq
\end{cor}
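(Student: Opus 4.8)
The plan is to deduce this corollary from Proposition \ref{teo relaciones AG con spred4} together with the compression estimate for the spectral spread of compact self-adjoint operators established in Proposition \ref{sprm comp}. Since the hypotheses of the present statement are exactly those of Proposition \ref{teo relaciones AG con spred4} (which is stated for all $E_1,E_2\in B(\H)^{sa}$, hence in particular for compact ones), I would first invoke it directly to get
$$
s(SE_1C+CE_2S)\prec_w \frac 1 2 \ \sprm(PE_1P\oplus -PE_2P)\,.
$$
It then only remains to show $\sprm(PE_1P\oplus -PE_2P)\prec_w \sprm(E_1\oplus -E_2)$ and to conclude by transitivity of submajorization between bounded sequences.

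For that remaining inequality the key observation is a block identity: writing $Q = P\oplus P\in B(\H\oplus\H)$ and $A = E_1\oplus -E_2\in B(\H\oplus\H)^{sa}$, one has $Q^2=Q^*=Q$, so $Q$ is an orthogonal projection of rank $2\,\rk(P)$ (finite or infinite), and $A\in K(\H\oplus\H)^{sa}$ because $E_1,E_2\in K(\H)^{sa}$; moreover a direct computation with the $2\times 2$ block matrices gives $QAQ = PE_1P\oplus -PE_2P$. Then Proposition \ref{sprm comp} (item 2, Eq. \eqref{spr con P}), applied to the compact self-adjoint operator $A$ and the projection $Q$, yields $\sprm(QAQ)\prec_w\sprm(A)$, which is precisely $\sprm(PE_1P\oplus -PE_2P)\prec_w\sprm(E_1\oplus -E_2)$.

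There is essentially no hard step here: the proof reduces to recognizing the identity $PE_1P\oplus -PE_2P=(P\oplus P)(E_1\oplus -E_2)(P\oplus P)$ and then chaining two already-proved results. The only point needing a moment's care is that $\rk(P)$, and hence $\rk(Q)$, may be infinite; but Proposition \ref{sprm comp} is stated for projections of arbitrary rank $1\le k\le\infty$, so this is covered, and the degenerate case $P=0$ is trivial since then $C=S=0$ and both sides of \eqref{eq nueva para comp3} vanish. One could also add, as was done for Corollary \ref{pro para gen AM comp}, that when moreover $E_1,E_2\in K(\H)^+$ this recovers the expected AGM-type norm inequality via \eqref{spr vs s pos} and Remark \ref{rem idealsimetrico}.
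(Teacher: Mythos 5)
Your proof is correct and follows essentially the same route as the paper: invoke Proposition \ref{teo relaciones AG con spred4}, identify $PE_1P\oplus -PE_2P$ as the compression $(P\oplus P)(E_1\oplus -E_2)(P\oplus P)$ of a compact self-adjoint operator, and apply item 2 of Proposition \ref{sprm comp} plus transitivity of $\prec_w$. The remarks about infinite rank and the degenerate case $P=0$ are harmless additions not needed in the paper's version.
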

\proof
This is a straightforward consequence of Corollary \ref{teo relaciones AG con spred4}, the fact that 
$$PE_1P\oplus PE_2P=(P\oplus P) (E_1\oplus E_2)(P\oplus P)\in K(\cH\oplus\cH)^{sa}$$ and item 2. in Proposition \ref{sprm comp}.
\qed

\pausa
Now we can state our main result on AGM-type inequalities for unitarily invariant norms.

\begin{teo}\label{teo generalized AG ineq}
Let $A,\,B\in B(\H)$ and let $E\in K(\H)^{sa}$. % with $\dim \H = \infty$.
Then 
\beq\label{eq con spread2} 
s(A\,E\,B^*)\prec_w \frac 1 2 \ \sprm \, \big(\,(A^*A+B^*B)^{1/2}\,E\,(A^*A+B^*B)^{1/2}\,\big)\,.
\eeq 
\end{teo}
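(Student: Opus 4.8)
The plan is to derive Theorem \ref{teo generalized AG ineq} from Corollary \ref{pro para gen AM comp} by means of a polar decomposition that rewrites $AEB^*$ as an anti--diagonal type product $S\,(DED)\,C^*$, where $D=(A^*A+B^*B)^{1/2}$ and the factors $C,S$ satisfy $C^*C+S^*S=P=P^2$.

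Concretely, I would first set $D=(A^*A+B^*B)^{1/2}\in B(\H)^+$ and consider the operator $T\in B(\H\coma \H\oplus\H)$ given by $Tx=Ax\oplus Bx$. Then $T^*T=A^*A+B^*B=D^2$, so that $|T|=D$. Writing the polar decomposition $T=V\,D$, where $V\in B(\H\coma \H\oplus\H)$ is a partial isometry with $V^*V=P$ the orthogonal projection onto $\ov{R(D)}$ (in particular $P=P^2$), and decomposing $V$ in components as $Vx=V_1x\oplus V_2x$ with $V_1\coma V_2\in B(\H)$, one gets $A=V_1D$, $B=V_2D$ and
$$
V_1^*V_1+V_2^*V_2=V^*V=P\,.
$$

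Next, since $D=D^*$, a direct computation gives $AEB^*=V_1\,D\,E\,D^*\,V_2^*=V_1\,(DED)\,V_2^*$, and $DED\in K(\H)^{sa}$ because $E\in K(\H)^{sa}$ and $D\in B(\H)^{sa}$. Applying Corollary \ref{pro para gen AM comp} with $S=V_1$ and $C=V_2$ (so that $C^*C+S^*S=P$ is an orthogonal projection) and with $DED$ playing the role of $E$, one obtains
$$
2\,s(AEB^*)=2\,s\big(V_1\,(DED)\,V_2^*\big)\prec_w \sprm(DED)\,,
$$
and since $DED=(A^*A+B^*B)^{1/2}\,E\,(A^*A+B^*B)^{1/2}$, this is exactly \eqref{eq con spread2}.

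The argument is essentially bookkeeping once this setup is in place; the only point that requires some care is the polar decomposition of an operator acting between the distinct Hilbert spaces $\H$ and $\H\oplus\H$, together with the verification that $V^*V$ is an orthogonal projection, so that Corollary \ref{pro para gen AM comp} applies verbatim. I do not expect any genuine obstacle beyond this. As an alternative to the polar decomposition, one could factor $A=V_1D$, $B=V_2D$ with $V_1^*V_1+V_2^*V_2\le I$ and then pass to the projection onto $\ov{R(D)}$, but the polar--decomposition route is the cleanest.
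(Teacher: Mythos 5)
Your proof is correct and follows essentially the same route as the paper: both reduce the statement to Corollary \ref{pro para gen AM comp} via a factorization $A=S\,D$, $B=C\,D$ with $D=(A^*A+B^*B)^{1/2}$ and $S^*S+C^*C$ equal to the orthogonal projection onto $\overline{R(D)}$. The only (cosmetic) difference is that you produce the factors from the polar decomposition of the column operator $x\mapsto Ax\oplus Bx$, which yields $V^*V=P$ automatically, whereas the paper invokes Douglas' factorization theorem for $A$ and $B$ separately and then verifies $S^*S+C^*C=P$ by a direct computation.
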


\begin{proof} 
%By Douglas Theorem \ref {dou}, 
By Douglas' theorem (cited as Theorem \ref{dou} in the Appendix),
the operator inequality $A^*A \le A^*A+B^*B$ shows that 
the (linear) operator equations
\beq \label{eq op eq1}
A=S\,(A^*A+B^*B)^{1/2} \py B=C\,(A^*A+B^*B)^{1/2}
\eeq admit unique solutions $S,\,C\in B(\H)$ also verifying that 
$$
\ker (A^*A+B^*B)^{1/2} = R(\, (A^*A+B^*B)^{1/2})\orto
 \stackrel{\eqref{los keres}}\inc R(S^*)\orto \cap R(C^*)\orto = 
\ker S\cap \ker C   \ .
$$
Hence, if $z\in R(\,(A^*A+B^*B)^{1/2})$ and $x\in\H$ is such that 
$z=(A^*A+B^*B)^{1/2}x$ then
$$
\barr{rl}
\langle (S^*S+C^*C)z,z\rangle&
=\|S(A^*A+B^*B)^{1/2}x\|^2+ \|C(A^*A+B^*B)^{1/2}x\|^2 \\& \\
& \stackrel{\eqref{eq op eq1}}= \|A\,x\|^2 +\|B\,x\|^2 = \api (A^*A+B^*B)\,x\coma x\cpi =\|z\|^2 
\ .\earr
$$ 
On the other hand, if 
$z\in \ker (AA^*+BB^*)^{1/2}$ then $(S^*S+C^*C)z=0$. Hence, if we let $P$ denote the orthogonal projection onto the closure of $R((A^*A+B^*B)^{1/2})$ then the previous facts show that
$\langle (S^*S+C^*C)z,z\rangle=\langle P z,z\rangle$, for $z\in\H$; thus, $S^*S+C^*C=P$.
Moreover
$$
AEB^*=S \,(A^*A+B^*B)^{1/2}\,E\,(A^*A+B^*B)^{1/2}\,C^*\,.
$$ 
We now apply Corollary \ref{pro para gen AM comp} 
and we get Eq. \eqref{eq con spread2}.
\end{proof}

\begin{rem}
Let $A$, $B$ and $E$ be as in Theorem \ref{teo generalized AG ineq}, but 
assume that $E\ge0$. Since $s\, \big(\,(A^*A+B^*B)^{1/2}\,E\,(A^*A+B^*B)^{1/2}\,\big) = 
s\,\big(\,E\rai\,(A^*A+B^*B)\,E^{1/2}\,\big)$ and $\sprm(C)=s(C)$ for $C\in K(\H)^+$,  
we can refomulate Eq. \eqref{eq con spread2} in the positive compact case as 
\beq\label{eq con spread3} 
2\, s(A\,E\,B^*)\prec_w s\,\big(\,E\rai\,(A^*A+B^*B)\,E^{1/2}\,\big) \ .
\eeq 
Then  Eq. \eqref{Ec Bhakitta 2} and its consequence Eq. \eqref{eq aud20}, being  entry-wise inequalities, are stronger than Eq. \eqref{eq con spread3} in the positive case. As in the previous 
inequalities of this paper,  
Eq. \eqref{eq con spread2} is a substitute of 
Eq. \eqref{eq aud20} in the self-adjoint non positive case. 

\pausa
Nevertheless, as it happens with Eq. \eqref{Ec BhaDav}, for the general self-adjoint case 
the submajorization relation  \eqref{eq con spread2} cannot be improved to an entry-wise inequality as \eqref {eq aud20}. %in the style of Bhatia and Kittaneh's \eqref{Ec Bhakitta}.
As in previous examples, we shall use that a matrix $A\in \mathcal{M}_n(\C)$ can be embedded as a finite rank operator, which allow us to %makes it easier to 
build counterexamples using matrices. %in the matrix case. 
Consider $$A=\bm{ccc}%\begin{pmatrix}
	1&0&-1\\
	0&1&0\\
	1&0&1
\em
\,, \ \ \ B=\bm{ccc}%\begin{pmatrix}
-1&0&1/2\\
0&1&0\\
1/2&0&1
\em %\end{pmatrix}
 \, \py E=\bm{ccc}%\begin{pmatrix}
1&0&2\\0&1&0\\
2&0&1
\em%\end{pmatrix}
\,.$$
In this case  $F=A^*A+B^*B=\bm{ccc}%\begin{pmatrix}
 \frac{13}4&0&0\\
0&2&0\\
0&0&\frac{13}4
\em%\end{pmatrix}
$ is such that 
$$
\barr{rl}
\la(F^{\frac12}E\,F^{\frac12}) &
= \big(\,(\dots \coma 0 \coma \frac{-13}4) \coma (\frac{39}4 \coma 2 \coma 0 \coma \dots) \, \big) 
\ \ \text{and} \  \ \sprm(F^{\frac12}E\,F^{\frac12})=(13 \coma 2 \coma 0 \coma \dots) .
\earr$$
Moreover $s(A\,E\,B^*)\approx(4,74 \coma 1,58 \coma 1 \coma 0....)$, which shows that 
\beq
3,16 \approx 2\,s_2(A\,E\,B^*)>\sprm_2(F^{\frac12}E\,F^{\frac12})=2\ . \EOEP
\eeq
\end{rem}

\begin{rem}\label{pepepe}
Using  Proposition \ref{pro para gen AM}, it is not difficult to prove that 
in the general case 
%(which include the matrix context) 
we can get a weaker version 
of Eq. \eqref{eq con spread2}: 
Let $A,\,B\in B(\H)$ and let $E\in B(\H)^{sa}$. Then 
\beq\label{eq con spread5} 
s(A\,E\,B^*)\prec_w \frac 1 2 \ \sprm \, \big(\,(A^*A+B^*B)^{1/2}\,E\,(A^*A+B^*B)^{1/2}
\oplus 0_\H\,\big)\,.
\eeq 
\EOE
\end{rem}

\subsection{The matrix case}
Notice that, by Eq. \eqref{dim fin} and Eq. \eqref{defi spread compactos}, 
the definition of spectral spread of self-adjoint operator given in \ref{defi spreads}
essentially coincides with the matrix spread defined in \cite{AKFEM}
and considered in \cite {MSZ2}; the main difference is that the spread of self-adjoint matrices is a finite vector.
%Moreover, it is not convenient to 
Nevertheless, if we embed a self-adjoint matrix $A$ as the finite rank operator $A\oplus 0$ by adding a zero block (as we have done in the examples) we notice that the spread of $A$ differs from the spectral spread of the compact operator $A\oplus 0$. 
%One can think matrices as finite rank operators by adding a cero block (as we have done in the examples), 
%but not in order to generalize the definition the sperad, because in is not true for matrices that 
%$\sprm(A\oplus 0) = \sprm(A)$ (with the definition of \cite {MSZ2}). 
For these reasons we consider all Hilbert spaces in this paper to have infinite dimension, in order to maintain  consistency.

\pausa
On the other hand, several statements of our present work hold for the finite dimensional case, 
by making slight adaptations of the proofs given here. Indeed, 
Theorem \ref{teo doblesprm oplus} is still valid for matrices and it 
is stronger than every statement about commutators given in \cite {MSZ2}. 
Also Proposition \ref{pro para gen AM}, Corollary \ref{coro AG} and Eq. \eqref{eq con spread5} in Remark \ref{pepepe}
hold in the matrix case, where no results about AGM-type inequalities using 
spectral spread were known to the best of our knowledge. 

\pausa Nevertheless, there are some results that do not hold for the finite dimensional case, which are 
 those statements restricted to compact operators that use the equality 
$\sprm(A\oplus 0_\H) = \sprm(A)$; notice that this last identity does not hold for self-adjoint matrices $A$ (see for example 
Corollary \ref{pro para gen AM comp} and Theorem \ref{teo generalized AG ineq}). 
Observe that  the counterexamples given in Remark \ref{rem mas cero} 
also work if $\dim \H <\infty$.

\subsection{On the equivalence of inequalities for the spectral spread}\label{sec 53}

\pausa
In this last section we show that several of the main results in this work are equivalent. It is worth pointing out that each reformulation has a quite different appeal. Indeed, notice that the statements involve the key result on the spectral spread (Theorem \ref{teo valecon2op}),
commutator inequalities and AGM-type inequalities, all with respect to submajorization. 
In the list of equivalent inequalities below we include a new inequality (item 4.) which is a Zhan type inequality  for the singular values of the difference of self-adjoint operators (see \cite{Zhan00} and also \cite{Zhan02,Zhan04}).

\begin{teo}
The  following inequalities are equivalent:
\ben
\item  $2\,s(P\,E\,(I-P))\prec_w \sprm(E)$, for every $E \coma P \in B(\H)^{sa}$, with $P=P^2$.
\item $s(EF-FE)\prec_w \frac 12 \ \sprm(E\oplus E) \, \cdot \, \sprm(F\oplus F)$, for every $E,\,F\in B(\H)^{sa}$.
\item $s(EX-XF)\prec_w \sprm(E\oplus F)\, \cdot \,s(X)$, for every $E,\,F\in B(\H)^{sa}$ and $ X\in B(\H)$. 
\item $s(E-F)\prec_w \sprm(E\oplus F)$, for every $E,\,F\in B(\H)^{sa}$.
\item 
Given $C,\,S\in B(\H)$  such that $C^*C+S^*S = I$, %where $P=P^2$, 
and $E\in B(\H)^{sa}$,  then
$$
2\,s(SEC^*)\prec_w \sprm(E \oplus 0_\H)\,.
$$

\een
\end{teo}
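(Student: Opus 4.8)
The strategy is to prove the cyclic chain of implications $1\Rightarrow 2\Rightarrow 3\Rightarrow 4\Rightarrow 5\Rightarrow 1$, which will make the five statements equivalent. Observe first that item 1 is merely a restatement of Theorem \ref{teo valecon2op}: for $E\in B(\H)^{sa}$ and a projection $P$, the operator $PE(I-P)$ is the anti-diagonal block of $E$ relative to $P$ (after identifying $\H$ with $R(P)\oplus\ker P$, which by Remark \ref{sing de transf} does not affect generalized singular values), and conversely every self-adjoint $2\times 2$ block operator on a separable Hilbert space has this shape once that space is identified with $\H$. Since the proofs of Theorems \ref{teo doblesprm} and \ref{teo doblesprm oplus}, and that of Corollary \ref{cor commutator viejo}, use Theorem \ref{teo valecon2op} as their only input among items 1--5, assuming 1 we obtain 2 (which is Eq. \eqref{eq teo prin conmut 1}), and assuming 2 the argument of Corollary \ref{cor commutator viejo} gives 3 (which is Eq. \eqref{eq commutator viejo}). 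Finally $3\Rightarrow 4$ follows by taking $X=I$ in item 3, since then $EX-XF=E-F$, $s(I)=\uno$ and $\sprm(E\oplus F)\cdot\uno=\sprm(E\oplus F)$.

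For $4\Rightarrow 5$, let $C,S\in B(\H)$ with $C^*C+S^*S=I$ and $E\in B(\H)^{sa}$. Using polar decompositions and Theorem \ref{teo rejunte1}, exactly as in the proof of Proposition \ref{pro para gen AM}, we may assume $S,C\in B(\H)^+$; then $S$ and $C$ commute (since $C^2=I-S^2$), so $U=\bm{cc}C&-S\\ S&C\em$ is a unitary on $\H\oplus\H$. Set $A=E\oplus 0_\H$, $T=UAU^*=\bm{cc}CEC&CES\\ SEC&SES\em$ and $V=\bm{cc}I&0\\ 0&-I\em$. A direct computation gives $T-VTV=\bm{cc}0&2CES\\ 2SEC&0\em$, which is an anti-diagonal block operator $\bm{cc}0&X\\ X^*&0\em$ with $X=2CES$; hence $s(T-VTV)=(s(X)\coma s(X))\da=2\,(s(SEC)\coma s(SEC))\da$ (using $(CES)^*=SEC$). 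Applying item 4 to the self-adjoint pair $T,VTV$, and noting that $T\oplus VTV$ is unitarily equivalent to $A\oplus A$ (as $T\cong A$ and $VTV\cong T$), Proposition \ref{pro rejunte spread2} (item 2) yields $2\,(s(SEC)\coma s(SEC))\da\prec_w(\sprm(A)\coma\sprm(A))\da$. Summing the first $2k$ entries on each side gives $2\sum_{i=1}^{k}s_i(SEC)\le\sum_{i=1}^{k}\sprm_i(A)$ for every $k$, i.e. $2\,s(SEC)\prec_w\sprm(A)=\sprm(E\oplus 0_\H)$; undoing the reduction to $S,C\ge0$, this is $2\,s(SEC^*)\prec_w\sprm(E\oplus 0_\H)$, which is item 5.

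For $5\Rightarrow 1$, let $E,P\in B(\H)^{sa}$ with $P^2=P$. Pick any real $c$ in the nonempty interval $[-\max\sigma_e(E)\coma-\min\sigma_e(E)]$; then $0\in C(E+cI)$, so Eq. \eqref{sumando B} gives $\la\big((E+cI)\oplus 0_\H\big)=\la(E+cI)$, and therefore, by translation invariance of the spectral spread (Proposition \ref{pro rejunte spread}, item 2), $\sprm\big((E+cI)\oplus 0_\H\big)=\sprm(E+cI)=\sprm(E)$. Now apply item 5 with $S=P$ and $C=I-P$ (so that $C^*C+S^*S=(I-P)+P=I$) and with $E+cI$ in place of $E$: since $P(E+cI)(I-P)=PE(I-P)+c\,P(I-P)=PE(I-P)$, we get $2\,s\big(PE(I-P)\big)\prec_w\sprm\big((E+cI)\oplus 0_\H\big)=\sprm(E)$, which is item 1.

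The crux of the argument is the step $5\Rightarrow 1$. Item 5 carries the spurious direct summand $0_\H$, and in general $\sprm(E\oplus 0_\H)\neq\sprm(E)$ (for instance $\sprm(I)=0$ while $\sprm(I\oplus 0_\H)=\uno$), so item 5 looks strictly weaker than item 1. The observation that closes the cycle is that this discrepancy is removed by a real translation: once $0$ is shifted into the interval $C(E+cI)=[\min\sigma_e(E+cI)\coma\max\sigma_e(E+cI)]$, the zero block becomes invisible to the spectral scale by Eq. \eqref{sumando B}, while the translation changes neither $\sprm(E)$ nor the off-diagonal block $PE(I-P)$. The other new ingredients are routine: $3\Rightarrow 4$ is trivial, and in $4\Rightarrow 5$ one only needs the polar-decomposition reduction (as in Proposition \ref{pro para gen AM}) together with the elementary comparison of partial sums.
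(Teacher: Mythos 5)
Your proof is correct. The legs $1\Rightarrow 2\Rightarrow 3\Rightarrow 4$ (by re-running the proofs of Theorems \ref{teo doblesprm}, \ref{teo doblesprm oplus} and Corollary \ref{cor commutator viejo} with item 1 as input, then setting $X=I$) and the translation trick for $5\Rightarrow 1$ coincide with the paper's argument. Where you genuinely diverge is in how the cycle is closed: the paper proves $1\Rightarrow 5$ (by inspecting the proof of Proposition \ref{pro para gen AM}) and, separately, $4\Rightarrow 1$, the latter by reducing to an off-diagonal block $B\geq 0$ via polar decomposition, symmetrizing the diagonal blocks by averaging $E$ with $RER$, and then diagonalizing with $Z=\frac{1}{\sqrt 2}\bm{cc}I&I\\-I&I\em$ so as to apply item 4 to the pair $E_1-B$, $E_1+B$. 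You instead prove $4\Rightarrow 5$ directly: after the (legitimate) reduction to $S,C\in B(\H)^+$, the key point that $C=(I-S^2)^{1/2}$ commutes with $S$ makes $U=\bm{cc}C&-S\\S&C\em$ unitary, and applying item 4 to $T=U(E\oplus 0_\H)U^*$ and $VTV$ produces exactly the anti-diagonal block $2\,SEC$ on the left and $\sprm\big((E\oplus 0_\H)\oplus(E\oplus 0_\H)\big)$ on the right, from which the even partial sums give $2\,s(SEC)\prec_w \sprm(E\oplus 0_\H)$. This is a clean alternative that localizes the hardest reverse implication in a single computation and makes the chain a genuine $1\Rightarrow2\Rightarrow3\Rightarrow4\Rightarrow5\Rightarrow1$ cycle; the paper's route, by contrast, exhibits $4\Rightarrow 1$ without passing through the AGM-type statement, which makes the logical dependence of item 1 on item 4 more transparent but requires the extra $E_1=E_2$ symmetrization step. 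Both arguments are complete and of the same strength.
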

\begin{proof}
By inspection of the proof of Proposition \ref{pro para gen AM} %Theorem \ref{teo generalized AG ineq} 
we see that $1.\rightarrow 5$. % (v\'\i a Proposition \ref{pro para gen AM}).
On the other hand, as in  Remark \ref{caso la}, 
given $E\in B(\H)^{sa}$, there exists $\la\in \R$ such that $\la_{-i}(E)\le \la \le \la_i(E) $ 
for every $i \in \N$. Denote  $E_\la = E-\la\, I$. Then
 $\sprm(E) = \sprm(E_\la) \stackrel{\eqref{sumando B}}= \sprm(E_\la \oplus 0_\H )$.
Hence
$$ 
2\,s(PE(I-P)\,) =2\,s(PE_\la (I-P)\,)   \stackrel{5.}{\prec_w} \sprm(E_\la \oplus 0_\H ) =\sprm(E_\la) =  \sprm(E)\ ,
$$ that shows that $5\rightarrow 1$. 
By inspection of the proofs of Theorem \ref{teo doblesprm oplus} and its 
Corollary \ref{cor commutator viejo} we see that 
$1.\rightarrow 2. \rightarrow 3$. 

\pausa On the other hand, if we let $X=I$ in item 3. we get item 4. Thus, $3\rightarrow 4$.

\pausa To prove  $4. \rightarrow 1.$ we can assume that $\cH = \cK\oplus \cK$ and that
$P = P_{\cK\oplus \trivial}\,$. Then %we can represent $E$ as a $2 \times 2$ matrix:  
$$
E=\bm{cc}E_1& B\\B^*&E_2\em \barr {c}\cK \\ \cK\earr \peso{with} s(PE(I-P))=s(B)\in c_0(\N)\da\,.
$$
Let $B =U |B| $ be the polar decomposition of $B$. Then $U\in B(\cK)$ is a partial isometry. 
If we construct the partial isometry $W\in B(\H) = B(\cK\oplus \cK)$ given by
$$
W= \bm{cc}U&0\\0&I_\cK\em \peso{then}
W^*EW = \bm {cc}U^*E_1U& U^*B \\ B^*U &E_2\em  = \bm {cc}U^*E_1U& |B| \\ |B| &E_2\em \ .
$$
Then, by item 4. in Proposition  \ref{pro rejunte spread2} we get that $\sprm(W^*EW)\prec_w \sprm(E)$ and $s(B)=s(|B|)$.
 Hence,  in order to show item 1. we can assume that $B\in K(\cK)^{sa}$. 
In this case, taking the self-adjoint unitary operator 
$R=\bm{cc}0&I\\I&0\em\in  B(\cK\oplus \cK)$, we have that
$$
RER 
= \bm {cc}E_2& B\\ B &E_1\em 
\implies
\frac{E+RER}{2}=\bm {cc}\frac{E_1+E_2}2 &B\\ B &\frac{E_1+E_2}2\em  \ . 
$$
By item 3. in Theorem \ref{teo colecta} (Weyl inequality) and item 4. in Proposition \ref{pro rejunte spread2}  
$$\la\left(\frac{E+RER}{2}\right) \prec \frac{\la(E)+\la(RER)}2=\la(E)
{\implies} 
\sprm \left(\frac{E+RER}{2}\right)\prec_w\sprm(E)\,.$$ Therefore, 
in order to show item 1. we can assume that $B\in K(\cK)^{sa}$ and $E_1=E_2\,$.

\pausa
Take now $Z = \frac 1{\sqrt2}\bm{cc}I&I\\-I&I\em\in B(\cK\oplus \cK)$ 
that is a unitary operator. Since now 
\beq\label{ZAZ}
E= \bm {cc}E_1&B\\ B &E_1\em \implies
Z^*EZ = 
\bm{cc}E_1-B&0\\0&E_1+B\em 
\ \, , \ \,  \sprm(Z^*EZ)=\sprm(E)\,.
\eeq
Hence, using item 4 we get that 
$$2\,s(B) = s\big(\, [E_1-B]-[E_1+B]\,\big) \,
\prec_w \, \sprm  (\, [E_1-B]\oplus [E_1+B]\,\big)  
\stackrel{\eqref{ZAZ}}{=} \sprm(E)\,.$$
This shows that $4\rightarrow 1$. and we are done.
\end{proof}

\pausa
In  the compact case we can add another equivalent inequality: 
\begin{pro}
The  following inequalities are equivalent: 
\ben
\item   For every $E \in K(\H)^{sa}$ and $ P \in B(\H)^{sa}$, with $P=P^2$,  
$$2\,s(P\,E\,(I-P))\prec_w \sprm(E) \ .$$

\item For every $E \in K(\H)^{sa}$,  and $A  \coma B\in B(\H)$,  
$$s(A\,E\,B^*)\prec_w \frac 1 2 \ \sprm((A^*A+B^*B)^{1/2}\,E\,(A^*A+B^*B)^{1/2}) \ .$$

\een
\end{pro}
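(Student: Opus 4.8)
The plan is to prove the two implications separately: the direction $2.\Rightarrow 1.$ is immediate, while $1.\Rightarrow 2.$ simply reuses the machinery already developed for Theorem \ref{teo generalized AG ineq}.

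For $2.\Rightarrow 1.$ I would specialize item 2. to $A=P$ and $B=I-P$, where $P\in B(\H)^{sa}$ is the given orthogonal projection. Then $A^*A+B^*B=P+(I-P)=I$, so $(A^*A+B^*B)^{1/2}=I$, and — since $B=I-P$ is self-adjoint — item 2. reads $s(PE(I-P))\prec_w\frac12\,\sprm(E)$, which is exactly item 1. Nothing else is needed here.

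For $1.\Rightarrow 2.$ the plan is to follow, essentially verbatim, the proofs of Proposition \ref{pro para gen AM}, Corollary \ref{pro para gen AM comp} and Theorem \ref{teo generalized AG ineq}, after observing that item 1. is precisely the ingredient those proofs consume. Given $A,\,B\in B(\H)$ and $E\in K(\H)^{sa}$, I would set $D=A^*A+B^*B$ and $F=D^{1/2}\,E\,D^{1/2}\in K(\H)^{sa}$. First, invoking Douglas' theorem (Theorem \ref{dou}) as in the proof of Theorem \ref{teo generalized AG ineq} produces $S,\,C\in B(\H)$ with $A=S\,D^{1/2}$, $B=C\,D^{1/2}$, with $S^*S+C^*C=P$ an orthogonal projection, and with $A\,E\,B^*=S\,F\,C^*$; it then suffices to prove $2\,s(S\,F\,C^*)\prec_w\sprm(F)$. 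Reducing $S,C$ to their positive polar factors (which does not affect singular values, by Theorem \ref{teo rejunte1}), I would run the unitary‑dilation construction from the proof of Proposition \ref{pro para gen AM}: on $\cH\oplus R(P)$ one builds a unitary $U$ conjugating $PFP\oplus 0$ into an operator whose anti-diagonal block is (a harmless co-domain restriction of) $SFC$. Since $E$ is compact, so are $F$ and $PFP$, hence $U(PFP\oplus 0)U^*$ is compact and self-adjoint, and item 1. — which says exactly that the anti-diagonal block of such an operator is submajorized by half its spectral spread — gives $2\,s(SFC)\prec_w\sprm(PFP\oplus 0)$. I would then close with the compact-case identity $\sprm(PFP\oplus 0_\H)=\sprm(PFP)$ (item 4. of Proposition \ref{pro rejunte spread}) together with $\sprm(PFP)\prec_w\sprm(F)$ (Eq. \eqref{spr con P} in Proposition \ref{sprm comp}).

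The step requiring the most care — the main, and rather mild, obstacle — is the bookkeeping around compactness and the matching of hypotheses: one must verify that the operator to which Theorem \ref{teo valecon2op} is applied inside the proof of Proposition \ref{pro para gen AM} is genuinely compact in the present setting, so that item 1. (asserted only for compact $E$) is a legitimate substitute for that theorem; and one must use the identity $\sprm(PFP\oplus 0_\H)=\sprm(PFP)$, valid only because $PFP$ is compact, to pass from the conclusion of Proposition \ref{pro para gen AM} to the sharper right-hand side $\sprm(F)$. Everything else is a direct transcription of arguments already present in the paper.
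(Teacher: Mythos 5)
Your proposal is correct and follows essentially the same route as the paper: the implication $2.\Rightarrow 1.$ by the substitution $A=P$, $B=I-P$ (so that $A^*A+B^*B=I$), and the implication $1.\Rightarrow 2.$ by rerunning the proofs of Proposition \ref{pro para gen AM}, Corollary \ref{pro para gen AM comp} and Theorem \ref{teo generalized AG ineq} with item 1.\ playing the role of Theorem \ref{teo valecon2op}. Your explicit bookkeeping that the dilated operator $U(PFP\oplus 0)U^*$ is compact (so that item 1.\ legitimately applies) and that $\sprm(PFP\oplus 0_\H)=\sprm(PFP)\prec_w\sprm(F)$ makes precise exactly what the paper leaves to ``inspection of the proofs.''
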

\proof
By inspection of the proofs  of Corollary \ref{pro para gen AM comp} and Theorem \ref{teo generalized AG ineq} 
we see that $1.\rightarrow 2.$
On the other hand, if we take $A=P$ and $B=I-P$ in $2,$ we see that $A^*A+B^*B=I$ and we recover item $1$. 
\QED

\medskip

\section{Appendix}

First, we collect several well known results about majorization, used throughout our work.
For detailed proofs of these results and general references in submajorization theory see \cite{GoKre}. 
In what follows we let $\M$ denote $\N$ or $\Z_0 \igdef \Z \setminus \trivial$.

\begin{lem}\label{lema desigrespetanmayo}\rm 
Let ${\bf x}\coma {\bf y}\coma  {\bf z}\coma {\bf  w} \in \ell^\infty(\M)\cap \R^\M$ be real sequences. Then,
\ben
\item 
${\bf x}+{\bf y}\prec {\bf x}^{\daua}+{\bf y}^{\daua}$;
\item If ${\bf x}\prec {\bf y}$ then $|{\bf x}|\prec_w |{\bf y}|$;
\item[3.] If ${\bf x}\prec {\bf z}$, ${\bf y}\prec {\bf w}$ with ${\bf z}={\bf z}\da$ and ${\bf w}={\bf w}\da$ then, $ {\bf x}+ {\bf y}\prec {\bf z}+{\bf w}$. 
\een
If we assume that $\M=\N$, then 
\ben 
\item[4.] If  ${\bf x}\prec_w {\bf y}$ and ${\bf z}\prec_w {\bf w}$ then $({\bf x},{\bf z})\prec_w({\bf y},{\bf w})$.
\een
If we assume further that ${\bf x}\coma {\bf y}\coma  {\bf z}\in \ell^\infty(\N)$ are non-negative sequences then,
\ben
\item[5.] $ {\bf x}\, \cdot \, {\bf y}\prec_w {\bf x}\da\, \cdot \, {\bf y}\da$;
\item[6.] If ${\bf x}\prec_w {\bf y}$ and ${\bf y}={\bf y}\da,\, {\bf z}={\bf z}\da$  then 
${\bf x}\, \cdot \, {\bf z}\prec_w {\bf y}\, \cdot \, {\bf z}$.\qed
\een
\end{lem}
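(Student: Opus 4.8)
All six items are standard facts of majorization theory, and the plan is to deduce each of them from the ``Ky Fan'' description of partial sums recorded just after Definition~\ref{defi permutaciones sucs},
$$\sum_{i=1}^k a_i^{\daua}=\sup\Big\{\sum_{i\in F}a_i:\ F\inc\M,\ |F|=k\Big\},\qquad \sum_{i=1}^k a_{-i}^{\daua}=\inf\Big\{\sum_{i\in F}a_i:\ F\inc\M,\ |F|=k\Big\},$$
together with the facts that $\sum_{i=1}^k a_i\da=\sum_{i=1}^k a_i^{\daua}$ and that, for non-negative ${\bf a},{\bf b}\in\ell^\infty(\N)$, one has $\sum_{i=1}^k({\bf a}\cdot{\bf b})_i\da=\max_{|F|=k}\sum_{i\in F}a_ib_i$. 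For items 1 and 2 it is cleanest to pass to the diagonal operators $D_{\bf x},D_{\bf y}\in B(\ell^2(\M))^{sa}$, for which $\la(D_{\bf x})={\bf x}^{\daua}$, $D_{{\bf x}+{\bf y}}=D_{\bf x}+D_{\bf y}$, and ${\bf a}\prec{\bf b}$ is equivalent to $D_{\bf a}\prec D_{\bf b}$; then item 1 is precisely Weyl's inequality for spectral scales (Theorem~\ref{teo rejunte1}(1)) applied to $D_{\bf x}$ and $D_{\bf y}$, using that majorization is unchanged by rearranging the left-hand side. Item 2 says that majorization implies submajorization of the absolute values: using the standard identity $\sum_{i=1}^k(|{\bf x}|)\da_i=\max_{p+q=k}\big(\sum_{i=1}^p x_i^{\daua}-\sum_{i=1}^q x_{-i}^{\daua}\big)$, which expresses the largest absolute values through the largest and the smallest entries, and bounding the two sums on the right by the corresponding halves of ${\bf x}\prec{\bf y}$, one gets $\sum_{i=1}^k(|{\bf x}|)\da_i\le\sum_{i=1}^k(|{\bf y}|)\da_i$ for every $k$.

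For item 3 I would use that the hypothesis makes ${\bf z}$ and ${\bf w}$ non-increasing, so that ${\bf z}+{\bf w}$ is again non-increasing and its partial sums coincide with those of its rearrangement; splitting $\sum_{i\in F}(x_i+y_i)=\sum_{i\in F}x_i+\sum_{i\in F}y_i$ and applying the Ky Fan formulas together with ${\bf x}\prec{\bf z}$ and ${\bf y}\prec{\bf w}$ then gives both the upper and the lower partial-sum inequalities that jointly mean ${\bf x}+{\bf y}\prec{\bf z}+{\bf w}$. This is the step where the general failure of additivity of majorization is circumvented: it is crucial that the two majorants are already sorted. For item 4, the $k$ largest entries of $({\bf x},{\bf z})$ consist of the $p$ largest of ${\bf x}$ and the $q=k-p$ largest of ${\bf z}$ for the optimal split, so $\sum_{i=1}^k({\bf x},{\bf z})_i\da=\max_{p+q=k}\big(\sum_{i=1}^p x_i\da+\sum_{i=1}^q z_i\da\big)$, and each summand is dominated using ${\bf x}\prec_w{\bf y}$ and ${\bf z}\prec_w{\bf w}$.

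Item 5 is the rearrangement inequality: for $|F|=k$, re-sorting $\{x_i\}_{i\in F}$ and $\{y_i\}_{i\in F}$ in non-increasing order and using non-negativity gives $\sum_{i\in F}x_iy_i\le\sum_{j=1}^k x_j\da\, y_j\da$, and the right-hand side is already the $k$-th partial sum of the non-increasing sequence ${\bf x}\da\cdot{\bf y}\da$; taking the supremum over $F$ finishes it. Item 6 is the assertion I expect to be the main obstacle. The idea is that $\Phi({\bf z}):=\sum_{i=1}^k({\bf x}\cdot{\bf z})_i\da=\max_{|F|=k}\sum_{i\in F}x_iz_i$ is sublinear, monotone, and $\|\cdot\|_\infty$-continuous on non-negative sequences, and that a non-increasing non-negative ${\bf z}$ admits the Abel decomposition ${\bf z}=z_\infty\,\uno+\sum_{m\ge1}(z_m-z_{m+1})\,\chi_{[1,m]}$, with $z_\infty=\lim_m z_m\ge0$ and all coefficients non-negative. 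It then suffices to verify the two extreme cases $\Phi(\uno)=\sum_{i=1}^k x_i\da\le\sum_{i=1}^k y_i$ and $\Phi(\chi_{[1,m]})\le\sum_{i=1}^{\min(k,m)}y_i$, both immediate from ${\bf x}\prec_w{\bf y}$; recombining these with the coefficients (a small limiting argument handles the infinite sum) and using that ${\bf y}\cdot{\bf z}$ is non-increasing, so $\sum_{i=1}^k({\bf y}\cdot{\bf z})_i\da=\sum_{i=1}^k y_iz_i$, yields ${\bf x}\cdot{\bf z}\prec_w{\bf y}\cdot{\bf z}$.
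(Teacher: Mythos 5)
The paper does not actually prove this lemma: it is stated with a \qed and the reader is referred to Gohberg--Krein \cite{GoKre} for detailed proofs, so there is no in-paper argument to compare yours against. On its own merits your proposal is correct, and it follows the standard lines one would find in the references. A few points deserve explicit mention if this were to be written out in full. In item 2, the identity $\sum_{i=1}^k(|{\bf x}|)\da_i=\max_{p+q=k}\big(\sum_{i=1}^p x_i^{\daua}-\sum_{i=1}^q x_{-i}^{\daua}\big)$ is true, but the inequality you actually need at the end, namely $\sum_{i=1}^p y_i^{\daua}-\sum_{i=1}^q y_{-i}^{\daua}\le\sum_{i=1}^k(|{\bf y}|)\da_i$, requires the small observation that near-optimal index sets for the sup and the inf can be taken disjoint (an index lying in both contributes $y_j-y_j=0$ and can be discarded, using non-negativity of $(|{\bf y}|)\da$ to pass from $k'<k$ back to $k$); this is routine but is the only place where the ``identity'' does real work. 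In item 6, your Abel decomposition ${\bf z}=z_\infty\,\uno+\sum_{m\ge1}(z_m-z_{m+1})\chi_{[1,m]}$ converges in $\ell^\infty$ (the tail of the partial sums is controlled by $z_{M+1}-z_\infty$), and $\Phi_k$ is Lipschitz for $\|\cdot\|_\infty$ with constant $\sum_{i=1}^k x_i\da$, so the countable subadditivity you invoke is legitimate; the final bookkeeping $z_\infty\sum_{i=1}^k y_i+\sum_{m\ge1}(z_m-z_{m+1})\sum_{i=1}^{\min(k,m)}y_i=\sum_{i=1}^k y_iz_i$ indeed telescopes to the sorted partial sum of ${\bf y}\cdot{\bf z}$. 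One cosmetic caveat: in item 3 the hypothesis ${\bf z}={\bf z}\da$ should be read as ${\bf z}={\bf z}^{\daua}$ when $\M=\Z_0$, and your sup/inf argument then gives both halves of the majorization exactly as you describe.
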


\pausa
The following results about finite vectors appear in \cite[Chapter II]{bhatia}.  
\begin{lem} \label{dos de Bh}
Let $\x \coma \y \coma \z\in \R^n$ 
\ben
\item  If $\x \coma \y \in \R_{\ge 0}^n\,$ , then $\x\da \, \cdot \,  \y \ua \prec \x \, \cdot \,  \y \prec \x\da \, \cdot \, \y \da $.
\item If $\x \coma \y \in (\R^n)\da$  and $\z\in (\R_{\ge 0}^n)\da\,$, then 
$\x\prec_w\y \implies \suml _{i\in \I_n} x_i \, z_i \le \suml _{i\in \I_n} y_i \, z_i$\,.
\QED
\een
\end{lem}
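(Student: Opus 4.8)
Both items are classical facts about finite real vectors — essentially the rearrangement inequality together with a summation‑by‑parts estimate, recorded in \cite[Chapter II]{bhatia} — so the plan is simply to reproduce the standard arguments. For item~2 the tool is Abel summation: since $\x=\x\da$ and $\y=\y\da$ are non‑increasing, the hypothesis $\x\prec_w\y$ says exactly that the partial sums $S_k=\sum_{i=1}^k x_i$ and $T_k=\sum_{i=1}^k y_i$ satisfy $S_k\le T_k$ for every $k\in\I_n$. Rewriting $\sum_{i=1}^n x_i z_i=\sum_{k=1}^{n-1}S_k\,(z_k-z_{k+1})+S_n\,z_n$ and using that $\z=\z\da\in\R_{\ge 0}^n$ makes every coefficient $z_k-z_{k+1}$, as well as $z_n$, non‑negative, one replaces each $S_k$ by the larger $T_k$ and recombines the (telescoping) sum to obtain $\sum_{i=1}^n x_i z_i\le\sum_{i=1}^n y_i z_i$.

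For item~1 the plan is the ``bubble‑sort'' mechanism behind the scalar rearrangement inequality, read off at the level of the full Hadamard product. After relabelling coordinates one may take $\x=\x\da$, so that $\x\da\cdot\y\ua$, $\x\cdot\y$ and $\x\da\cdot\y\da$ are all of the form $(x_i\,y_{\pi(i)})_{i\in\I_n}$ for suitable permutations $\pi$. Moving $\y$ toward $\y\da$ by successive transpositions of an out‑of‑order adjacent pair changes the product vector only in two coordinates $i<j$ at a time, where (because $x_i\ge x_j$ and $y_{\pi(i)}\le y_{\pi(j)}$) it replaces the pair $(x_i y_{\pi(i)},\,x_j y_{\pi(j)})$ by $(x_i y_{\pi(j)},\,x_j y_{\pi(i)})$. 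The elementary two‑dimensional comparison between these two pairs (the new one has the larger maximum and the larger sum) then propagates — via the routine fact that a (sub)majorization occurring in just two coordinates lifts to the whole $n$‑vector — to $\x\cdot\y\prec\x\da\cdot\y\da$; running the transpositions toward $\y\ua$ instead yields $\x\da\cdot\y\ua\prec\x\cdot\y$. This is the finite‑dimensional counterpart of the estimates in Lemma~\ref{lema desigrespetanmayo}.

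The step I expect to require the most care is this last propagation in item~1: one must verify, separately according to whether a given index subset meets $\{i,j\}$ in zero, one, or two elements, that a two‑coordinate comparison really does lift to a relation between the full vectors. This is exactly the delicate point that makes a naive ``multiplicative'' analogue fail, so it should be written out rather than asserted.
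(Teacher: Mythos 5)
The paper offers no proof of Lemma \ref{dos de Bh}; it is simply quoted from \cite[Chapter II]{bhatia}, so the only question is whether your argument is sound. Your treatment of item~2 is correct and is the standard one: with $\x=\x\da$, $\y=\y\da$ the hypothesis $\x\prec_w\y$ is exactly $S_k\le T_k$ for the plain partial sums, and Abel summation with the non-negative coefficients $z_k-z_{k+1}$ and $z_n$ finishes it.

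In item~1, however, there is a mismatch between what your mechanism produces and what you claim to conclude. As you yourself note, an adjacent transposition replaces the pair $(ac,bd)$ (with $a\ge b\ge 0$, $d\ge c\ge 0$) by $(ad,bc)$, whose \emph{sum is strictly larger} whenever $(a-b)(d-c)>0$. Hence the total sums of $\x\cdot\y$ and $\x\da\cdot\y\da$ need not agree --- take $\x=(1,0)$ and $\y=(0,1)$, for which $\x\cdot\y=(0,0)$ while $\x\da\cdot\y\da=(1,0)$ --- so no argument can upgrade the conclusion to a genuine majorization $\prec$, which for vectors of length $n$ forces equality of the sums at $k=n$ (this is so both for the classical definition and for the one in Definition \ref{defi permutaciones sucs}). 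What your transposition scheme actually proves, correctly, is the submajorization $\x\da\cdot\y\ua\prec_w\x\cdot\y\prec_w\x\da\cdot\y\da$: each swap is a two-coordinate $\prec_w$, and the lifting step you rightly flag does go through for $\prec_w$ by writing the $k$-th partial sum of the decreasing rearrangement as a maximum over index sets of size $k$ and splitting each such set according to its intersection with $\{i,j\}$. This weaker form is all the paper ever uses (in the proof of Lemma \ref{lematraza} only the $\prec_w$ consequences of item~1 are invoked), so you should either state your conclusion as $\prec_w$ or remark that the $\prec$ in the printed statement must be read as $\prec_w$; as literally written with $\prec$, the inequality is false and your proof does not (and cannot) establish it.
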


\pausa
Next we consider the following useful fact.

\begin{pro}\label{hat trick como en el futbol}\rm
Let $E\in B(\H)^{sa}$ and set
$\hat{E}=\bm{cc} 0&E\\  E^*&0
\em\in B(\H\oplus \H)^{sa}$. Then, 
\beq
\la(\hat E)= \big(\, s(E)\coma -s(E^*)\,\big)^{\daua}
=\big(\,s(E)\coma -s(E)\, \big)^{\daua}
\,.
 \QEDP
\eeq
\end{pro}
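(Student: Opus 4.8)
The plan is to determine $\la(\hat E)$ by combining an antisymmetry of $\hat E$ with the simple form of its modulus.

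First I would note that $\hat E^2 = EE^*\oplus E^*E$, so that, $\hat E$ being self-adjoint, $|\hat E| = (\hat E^2)^{1/2} = |E^*|\oplus|E|$. Using Definition \ref{SV}, the standard description of the spectral scale of a direct sum of self-adjoint operators as the non-increasing rearrangement of the concatenation of the two spectral scales (see \cite{AMRS07}), and the fact that $|E|$ and $|E^*|$ both have singular value sequence $s(E)$ (Remark \ref{sing de transf}), one gets $s(\hat E) = \la(|E^*|\oplus|E|) = \big(\,s(E)\coma s(E)\,\big)\da$; concretely, since $s(E)$ is non-increasing while each $\la_{-i}(|E|)$ and $\la_{-i}(|E^*|)$ satisfies $\la_{-i}(\,\cdot\,)\le\max\sigma_e(|E|)\le s_k(E)$ for all $i,k$ (by Remark \ref{caso la} and Eq. \eqref{sangu}), this amounts to $s_{2k-1}(\hat E) = s_{2k}(\hat E) = s_k(E)$ for every $k\in\N$.

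Next I would bring in the self-adjoint unitary $W = I_\H\oplus(-I_\H)$, which satisfies $W\hat E W = -\hat E$. Hence $\la(\hat E) = \la(-\hat E)$, and since $\la_i(-\hat E) = -\la_{-i}(\hat E)$ (Definition \ref{defi espec scale}) this forces the antisymmetry $\la_{-i}(\hat E) = -\la_i(\hat E)$ for all $i\in\Z_0$; in particular $\la_i(\hat E)\ge\la_{-i}(\hat E) = -\la_i(\hat E)$ by Eq. \eqref{sangu}, so $\la_i(\hat E)\ge0$ for $i\ge1$. Combining the two steps: by item 4 of Theorem \ref{teo rejunte1}, $s(\hat E) = |\la(\hat E)|\da$, and by the antisymmetry $|\la(\hat E)|$ is the sequence obtained from the non-increasing sequence $(\la_i(\hat E))_{i\ge1}$ by doubling each entry, so the $(2k)$-th entry of $|\la(\hat E)|\da$ is $\la_k(\hat E)$. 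Matching this with the formula for $s(\hat E)$ above gives $\la_k(\hat E) = s_k(E)$ for $k\ge1$, whence $\la_{-k}(\hat E) = -s_k(E) = -s_k(E^*)$. By Definition \ref{defi permutaciones sucs} and Eq. \eqref{de a dos}, the element of $\ell^\infty(\Z_0)$ with positive part $s(E)$ and negative part $-s(E^*)$ is precisely $\big(\,s(E)\coma -s(E^*)\,\big)^{\daua}$ (here the inequality $\la_{-i}(|E|)\le s_k(E)$ is again what ensures $^{\daua}$ does not interleave the non-negative entries $s(E)$ with the non-positive entries $-s(E^*)$), and since $s(E) = s(E^*)$ this equals $\big(\,s(E)\coma -s(E)\,\big)^{\daua}$, as claimed.

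The step I expect to be the main (though modest) obstacle is the $\Z_0$-indexing bookkeeping in the non-compact case: one must check that the rearrangement $^{\daua}$ genuinely separates the values $s(E)$ on the positive indices of $\la(\hat E)$ from the values $-s(E^*)$ on the negative ones, which comes down to $\la_{-i}(|E|)\le\max\sigma_e(|E|)\le s_k(E)$; in the compact case this is immediate since $\la_{-i}(|E|) = 0$ and $s_k(E)\to0$. A secondary routine point is the direct-sum formula for spectral scales invoked in the first step, which in the compact case is just the singular value decomposition argument.
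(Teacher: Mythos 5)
The paper states Proposition \ref{hat trick como en el futbol} in the Appendix with no proof at all (it is recorded as a known auxiliary fact), so there is no argument of the authors to compare against; judged on its own, your proof is correct. Your route --- computing $|\hat E|=|E^*|\oplus |E|$ to get $s_{2k-1}(\hat E)=s_{2k}(\hat E)=s_k(E)$, using the symmetry $W\hat EW=-\hat E$ with $W=I_\H\oplus(-I_\H)$ to force $\la_{-i}(\hat E)=-\la_i(\hat E)\le 0\le \la_i(\hat E)$, and then matching through $s(\hat E)=|\la(\hat E)|\da$ to conclude $\la_k(\hat E)=s_k(E)$ --- is clean and, notably, avoids any appeal to the polar decomposition or an SVD-type block diagonalization, which in the non-compact infinite-dimensional setting would require care with non-unitary partial isometries. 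Two small remarks. First, the one ingredient you import from outside the paper is the description of the positive part of $\la(A\oplus B)$ for $A,B\in B(\H)^+$ as the non-increasing rearrangement of the concatenation of $(\la_i(A))_{i\ge 1}$ and $(\la_i(B))_{i\ge 1}$; this is indeed standard and follows from Theorem \ref{teo colecta}(2) by the usual Ky Fan argument (split a rank-$k$ projection into its diagonal blocks), so citing \cite{AMRS07} is adequate, but it is the only step that is not purely a manipulation of facts already displayed in the paper. Second, your argument nowhere uses $E=E^*$, so it actually proves the statement for arbitrary $E\in B(\H)$, which is the form in which the proposition is invoked in the proofs of Theorems \ref{teo valecon2op} and \ref{teo doblesprm} (there the off-diagonal block is not self-adjoint); this is a point in favour of your write-up, since the hypothesis ``$E\in B(\H)^{sa}$'' in the statement is evidently a slip.
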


\pausa
Finally, we state a well known result of R. Douglas \cite{Dou66} which contains criteria for the factorization of operators that we will need in the sequel.
\begin{teo}%[Douglas]
\label{dou} \rm
Let $A \coma B \in B(\H)$. Then the following conditions are equivalent: 
\ben
  \item $R(A)\subseteq R(B)$.
  \item There exists $\la \in \R_+$	 such that  
	$AA^*\leq \lambda \,  BB^*$. 
  \item There exists $C\in B(\H)$ such that $A=BC$.  
  
\een
In this case, there exists an unique 
\beq\label{los keres}
C\in  B(\H ) \peso{such that} A=BC \peso{and} R(C)
\subseteq \overline{R(B^*)}= \ker B\orto \ .
\eeq
\QED
\end{teo}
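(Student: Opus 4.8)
The plan is to single out condition (3) (the factorization $A=BC$) as the hub and prove $(3)\Rightarrow(1)$, $(3)\Rightarrow(2)$, $(1)\Rightarrow(3)$ and $(2)\Rightarrow(3)$; since each of (1) and (2) is then equivalent to (3), all three conditions are equivalent. The two implications out of (3) are immediate: if $A=BC$ then $R(A)=R(BC)\inc R(B)$, giving (1); and $AA^*=B\,CC^*\,B^*\le \norm{C}^2\,BB^*$ because $CC^*\le \norm{C}^2 I$, giving (2) with $\la=\norm{C}^2$. The real content lies in the two implications into (3), which is where I expect the main work.

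For $(1)\Rightarrow(3)$ I would argue by the closed graph theorem. Using $\ker B = R(B^*)\orto$, so that $\overline{R(B^*)}=\ker B\orto$, I define a linear map $C$ by sending each $x$ to the unique vector $Cx\in \ker B\orto$ with $B\,Cx = Ax$; such a vector exists because $Ax\in R(A)\inc R(B)$, and is unique once we require it to lie in $\ker B\orto$. Then $A=BC$ holds as an identity of linear maps, and boundedness of $C$ follows from the closed graph theorem: if $x_n\to x$ and $Cx_n\to z$, then $Ax_n=B\,Cx_n\to Bz$ while $Ax_n\to Ax$, so $B(z-Cx)=0$; since $z,\,Cx\in \ker B\orto$ we get $z-Cx\in \ker B\cap \ker B\orto=\trivial$, hence $z=Cx$ and the graph is closed.

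For $(2)\Rightarrow(3)$ I would use the inequality $AA^*\le \la\,BB^*$ in the pointwise form $\norm{A^*x}^2=\langle AA^*x,x\rangle \le \la\,\langle BB^*x,x\rangle=\la\,\norm{B^*x}^2$ for every $x\in\H$. This shows the assignment $B^*x\mapsto A^*x$ is well defined and bounded (with norm $\le \sqrt{\la}$) on $R(B^*)$: indeed $B^*x=B^*y$ forces $\norm{A^*(x-y)}\le\sqrt{\la}\,\norm{B^*(x-y)}=0$. I extend it by continuity to $\overline{R(B^*)}$ and by zero on $\ker B=\overline{R(B^*)}\orto$, obtaining $D\in B(\H)$ with $D\,B^*=A^*$. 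Taking adjoints yields $B\,D^*=A$, so $C=D^*$ solves $A=BC$.

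Finally, for the last assertion I would produce the distinguished solution and prove its uniqueness. The map built in the $(1)\Rightarrow(3)$ step (equivalently, $C=D^*$ above) satisfies $R(C)\inc \ker B\orto=\overline{R(B^*)}$ by construction. For uniqueness, if $A=BC_1=BC_2$ with both ranges contained in $\overline{R(B^*)}$, then $B(C_1-C_2)=0$ forces $R(C_1-C_2)\inc \ker B$, while also $R(C_1-C_2)\inc \overline{R(B^*)}=\ker B\orto$; hence $R(C_1-C_2)\inc \ker B\cap \ker B\orto=\trivial$, so $C_1=C_2$. The main obstacle is the $(2)\Rightarrow(3)$ construction, where care is needed to check that $B^*x\mapsto A^*x$ is genuinely well defined on $R(B^*)$ and that extending by zero on $\ker B$ produces a bounded operator whose adjoint inverts the factorization; the identity $\ker B=R(B^*)\orto$ is the technical fact that makes both the extension and the uniqueness argument work.
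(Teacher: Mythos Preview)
Your proof is correct and complete; the argument via the closed graph theorem for $(1)\Rightarrow(3)$ and the direct construction of the contraction $D$ for $(2)\Rightarrow(3)$ is exactly the standard approach (and essentially Douglas' original). Note, however, that the paper itself does not supply a proof of this statement: it is quoted in the Appendix as a known result, with a citation to \cite{Dou66} and a \QED, so there is no ``paper's own proof'' to compare against.
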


{\scriptsize
}

\end{document}